\definecolor{LightCyan}{rgb}{0.92,1,1}
\colorlet{pink}{red!40}
\colorlet{lightblue}{blue!30}
\colorlet{lightgreen}{green!30}
\renewcommand{\cite}[1]{\citep{#1}}
\def\fwt{{S}}
\DeclareMathOperator{\Iota}{I}
\newcommand{\E}{\mathbb{E}}
\newcommand{\N}{\mathbb{N}}
\newcommand{\R}{\mathbb{R}}
\newcommand{\Z}{\mathbb{Z}}
\newcommand\cC{{\ensuremath{\mathcal{C}}}\xspace}
\newcommand\cH{{\ensuremath{\mathcal{H}}}\xspace}
\newcommand\cI{{\ensuremath{\mathcal{I}}}\xspace}
\newcommand\cO{{\ensuremath{\mathcal{O}}}\xspace}
\newcommand\cS{{\ensuremath{\mathcal{S}}}\xspace}
\newcommand\cY{{\ensuremath{\mathcal{Y}}}\xspace}
\newcommand{\eps}{\varepsilon}
\DeclareMathOperator{\conv}{conv}
\DeclareMathOperator{\vertices}{vert}
\DeclareMathOperator{\aff}{aff}
\DeclareMathOperator{\supp}{supp}
\DeclareMathOperator{\mathspan}{span}
\DeclareMathOperator{\trace}{tr}
\DeclareMathOperator{\nuc}{nuc}
\DeclareMathOperator{\argmin}{argmin}
\DeclareMathOperator{\argmax}{argmax}
\newcommand{\zeroterm}{\ensuremath{\mathbb{0}}}
\newcommand{\oneterm}{\ensuremath{\mathbb{1}}}
\newcommand{\sabs}[1]{\left\lvert #1 \right\rvert}
\newcommand*{\vsepfbox}[1]{%
  \begingroup
    \sbox0{\fbox{#1}}%
    \setlength{\fboxrule}{0pt}%
    \mbox{\kern-\fboxsep\fbox{\unhbox0}\kern-\fboxsep}%
  \endgroup
}
\theoremstyle{plain} \numberwithin{equation}{section}
\newtheorem{theorem}{Theorem}[section]
\numberwithin{theorem}{section}
\newtheorem{lemma}[theorem]{Lemma}
\newtheorem{proposition}[theorem]{Proposition}
\theoremstyle{definition}
\newtheorem{definition}[theorem]{Definition}
\newtheorem{remark}[theorem]{Remark}
\theoremstyle{plain}
\newtheorem{assumption}{Assumption}
\def\mathcolor#1#{\@mathcolor{#1}}
\def\@mathcolor#1#2#3{%
  \protect\leavevmode
  \begingroup
    \color#1{#2}#3%
  \endgroup
}
\newcommand{\hrulealg}[0]{\vspace{1mm} \hrule \vspace{1mm}}
\begin{document}

\title{Acceleration of Frank-Wolfe Algorithms with Open-Loop Step-Sizes}

\author{\name Elias Wirth \email          \texttt{\href{mailto:wirth@math.tu-berlin.de}{wirth@math.tu-berlin.de}}\\
      \addr Institute of Mathematics \\
      Berlin Institute of Technology \\
      Strasse des 17. Juni 135, Berlin, Germany
      \AND
      \name Thomas Kerdreux \email \texttt{\href{thomaskerdreux@gmail.com}{thomaskerdreux@gmail.com}}\\
      \addr Geolabe LLC \\
      1615 Central Avenue, Los Alamos, New Mexico, USA
      \AND
      \name Sebastian Pokutta \email \texttt{\href{mailto:pokutta@zib.de}{pokutta@zib.de}} \\
      \addr Institute of Mathematics \& AI in Society, Science, and Technology\\
      Berlin Institute of Technology \& Zuse Institute Berlin\\
      Strasse des 17. Juni 135, Berlin, Germany}
\maketitle

\begin{abstract}
Frank-Wolfe algorithms (FW) are popular first-order methods for solving constrained convex optimization problems that rely on a linear minimization oracle instead of potentially expensive projection-like oracles. Many works have identified accelerated convergence rates under various structural assumptions on the optimization problem and for specific FW variants when using line-search or short-step, requiring feedback from the objective function. Little is known about accelerated convergence regimes when utilizing open-loop step-size rules, a.k.a. FW with pre-determined step-sizes, which are algorithmically extremely simple and stable. Not only is FW with open-loop step-size rules not always subject to the same convergence rate lower bounds as FW with line-search or short-step, but in some specific cases, such as kernel herding in infinite dimensions, it has been empirically observed that FW with open-loop step-size rules enjoys to faster convergence rates than FW with line-search or short-step. We propose a partial answer to this unexplained phenomenon in kernel herding, characterize a general setting for which FW with open-loop step-size rules converges non-asymptotically faster than with line-search or short-step, and derive several accelerated convergence results for FW with open-loop step-size rules. Finally, we demonstrate that FW with open-loop step-sizes can compete with momentum-based open-loop FW variants.
\end{abstract}

\begin{keywords}
  Frank-Wolfe algorithm, open-loop step-sizes, acceleration, kernel herding, convex optimization
\end{keywords}

\section{{Introduction}}
In this paper, we address the constrained convex optimization problem
\begin{equation}\label{eq:opt}\tag{OPT}
    \min_{x\in\cC}f(x),
\end{equation}
where $\cC\subseteq\R^d$ is a compact convex set and $f\colon \cC \to \R$ is a convex and $L$-smooth function. Let $x^* \in \argmin_{x\in \cC} f(x)$ be the constrained optimal solution.
A classical approach to addressing \eqref{eq:opt} is to apply \emph{projected gradient descent}. When the geometry of $\cC$ is too complex, the projection step can become computationally too expensive. In these situations, the \emph{Frank-Wolfe algorithm} (FW) \citep{frank1956algorithm}, a.k.a. the conditional gradients algorithm \citep{levitin1966constrained}, described in Algorithm~\ref{algo:fw}, is an efficient alternative, as it only requires first-order access to the objective $f$ and access to a linear minimization oracle (LMO) for the feasible region, that is, given a vector $c \in \R^d$, the LMO outputs $\argmin_{x \in \cC} \langle c, x\rangle$.
At each iteration, the algorithm calls the LMO, $p_t \in \argmin_{p\in \cC} \langle \nabla f (x_t), p-x_t\rangle$, and takes a step in the direction of the vertex $p_t$ to obtain the next iterate $x_{t+1}= (1-\eta_t) x_t + \eta_t p_t$. As a convex combination of elements of $\cC$, $x_t$ remains in the feasible region $\cC$ throughout the algorithm's execution. 
Various options exist for the choice of $\eta_t$, such as the \textit{open-loop step-size}\footnote{Open-loop is a term from control theory and here implies that there is no feedback from the objective function to the step-size.}, a.k.a. \emph{agnostic step-size}, rules $\eta_t = \frac{\ell}{t + \ell}$ for $\ell \in \N_{\geq 1}$ \citep{dunn1978conditional} or line-search $\eta_t \in \argmin_{\eta  \in [0,1]} f((1-\eta) x_t + \eta p_t)$.
Another classical approach, the \emph{short-step} step-size $\eta_t = \min\{ \frac{ \langle \nabla f(x_t), x_t - p_t\rangle}{L\|x_t - p_t\|_2^2},1\}$, henceforth referred to as short-step, is determined by minimizing a quadratic upper bound on the $L$-smooth objective function.
There also exist variants that adaptively estimate local $L$-smoothness parameters \citep{pedregosa2018step}.
\begin{algorithm}[t]
\SetKwInput{Input}{Input}
\SetKwInput{Output}{Output}
\caption{Frank-Wolfe algorithm (FW) \citep{frank1956algorithm}}\label{algo:fw}
  \Input{$x_0\in \cC$, step-sizes $\eta_t\in [0, 1]$ for $t\in\{0,\ldots, T-1\}$.}
  \hrulealg
  \For{$t= 0, \ldots, T-1 $}{
        $p_{t} \in \argmin_{p \in \cC} \langle\nabla f(x_{t}), p- x_{t}\rangle$\label{line:p_t_det}\\
        $x_{t+1} \gets (1 - \eta_{t}) x_t + \eta_t p_{t}$}
\end{algorithm}
\subsection{{Related work}}\label{sec:related_work}
Frank-Wolfe algorithms (FW) are first-order methods that enjoy various appealing properties \citep{jaggi2013revisiting}. They are easy to implement, projection-free, affine invariant \citep{lacoste2013affine,lan2013complexity,kerdreux2021affine,pena2021affine}, and iterates are sparse convex combinations of extreme points of the feasible region. These properties make FW an attractive algorithm for practitioners who work at scale, and FW appears in a variety of scenarios in machine learning, such as deep learning, optimal transport, structured prediction, and video co-localization \citep{ravi2018constrained,courty2016optimal,giesen2012optimizing,joulin2014efficient}. See \citet{braun2022conditional}, for a survey.
For several settings, FW with line-search or short-step admits accelerated convergence rates in primal gap $h_t = f(x_t) - f(x^*)$, where $x^*\in\argmin_{x\in\cC}f(x)$ is the minimizer of $f$:
Specifically, when the objective is strongly convex and the optimal solution lies in the relative interior of the feasible region, FW with line-search or short-step converges linearly \citep{guelat1986some}. Moreover, when the feasible region is strongly convex and the norm of the gradient of the objective is bounded from below by a nonnegative constant, FW with line-search or short-step converges linearly \citep{levitin1966constrained, demianov1970approximate, dunn1979rates}. Finally, when the feasible region and objective are strongly convex, FW with line-search or short-step converges at a rate of order $\cO(1/t^2)$, see also Table~\ref{table:references_to_results}.
However, the drawback of FW is its slow convergence rate
when the feasible region $\cC$ is a polytope and the optimal solution lies in the relative interior of an at least one-dimensional face $\cC^*$ of $\cC$. In this setting, for any $\epsilon > 0$, FW with line-search or short-step converges at a rate of order $\Omega (1/t^{1+\epsilon})$ \citep{wolfe1970convergence, canon1968tight}.
To achieve linear convergence rates in this setting, algorithmic modifications of FW are necessary \citep{lacoste2015global, garber2016linear,  braun2019blended, combettes2020boosting, garber2020revisiting}.

FW with open-loop step-size rules, on the other hand, has a convergence rate that is not governed by the lower bound of \citet{wolfe1970convergence}. Indeed, \citet{bach2021effectiveness} proved an asymptotic convergence rate of order $\cO(1/t^2)$ for FW with open-loop step-sizes in the setting of \citet{wolfe1970convergence}. However, proving that the latter result holds non-asymptotically remains an open problem.
Other disadvantages of line-search and short-step are that the former can be difficult to compute and the latter requires knowledge of the smoothness constant of the objective $f$. On the other hand, open-loop step-size rules are problem-agnostic and, thus, easy to compute.
Nevertheless, little is known about the settings in which FW with open-loop step-size rules admits acceleration, except for two momentum-exploiting variants that achieve convergence rates of order up to $\cO(1/t^2)$:
The \emph{primal-averaging Frank-Wolfe algorithm} (PAFW), presented in Algorithm~\ref{algo:pafw}, was first proposed by \citet{lan2013complexity} and later analyzed by \citet{kerdreux2021local}. PAFW employs the open-loop step-size $\eta_t = \frac{2}{t+2}$ and momentum to achieve convergence rates of order up to $\cO(1/t^2)$ when the feasible region is uniformly convex and the gradient norm of the objective is bounded from below by a nonnegative constant.
For the same setting, the \emph{momentum-guided Frank-Wolfe algorithm} (MFW) \citep{li2021momentum}, presented in Algorithm~\ref{algo:mfw}, employs the open-loop step-size $\eta_t = \frac{2}{t+2}$, and also incorporates momentum to achieve similar convergence rates as PAFW. In addition, MFW converges at a rate of order $\cO(1/t^2)$ when the feasible region is a polytope, the objective is strongly convex, the optimal solution lies in the relative interior of an at least one-dimensional face of $\cC$, and strict complementarity holds.
Finally, note that FW with open-loop step-size $\eta_t = \frac{1}{t+1}$ is equivalent to the kernel-herding algorithm \citep{bach2012equivalence}. For a specific infinite-dimensional kernel-herding setting, empirical observations in \citet[Figure 3, right]{bach2012equivalence} have shown that FW with open-loop step-size $\eta_t = \frac{1}{t+1}$ converges at the optimal rate of order $\cO(1/t^2)$, whereas FW with line-search or short-step converges at a rate of essentially $\Omega(1/t)$. Currently, both phenomena lack a theoretical explanation.

\subsection{{Contributions}}
\begin{table*}[t]
\footnotesize
\centering
\begin{tabular}{|c|c|c|c|c|c|c|}
    \hline
       References & Region $\cC$ & Objective $f$ & Location of $x^*$ &  Rate & Step-size rule \\
     \hline
     \rowcolor{LightCyan}
     \citep{jaggi2013revisiting}&-& - & unrestricted & $\cO(1/t)$  & any\\
     \hline
      \citep{guelat1986some}& -& str. con. & interior & $\cO(e^{-t})$ &  line-search, short-step \\
     \hline
     \bf{Theorem}~\ref{thm:interior} & -& str. con. & interior & $\cO(1/t^2)$ &  open-loop $\eta_t = \frac{4}{t+4}$\\
     \hline
     \rowcolor{LightCyan}
     \makecell{\citep{levitin1966constrained}\\
     \citep{demianov1970approximate}\\
     \citep{dunn1979rates}} & str. con. &\makecell{$\|\nabla f(x)\|_2 \geq\lambda > 0$\\
     for all $x\in\cC$} & unrestricted & $\cO(e^{-t})$ &  line-search, short-step\\
     \hline
     \rowcolor{LightCyan}
     \bf{Theorem}~\ref{thm:exterior} & str. con. & \makecell{$\|\nabla f(x)\|_2 \geq\lambda > 0$\\
     for all $x\in\cC$} & unrestricted & $\cO(1/t^2)$ &  open-loop $\eta_t = \frac{4}{t+4}$\\
      \hline
     \rowcolor{LightCyan}
     \bf{Remark}~\ref{rem:ol_linear} & str. con. & \makecell{$\|\nabla f(x)\|_2 \geq\lambda > 0$\\
     for all $x\in\cC$} & unrestricted & $\cO(1/t^{\ell/2})$ &  \Gape[0pt][2pt]{\makecell{open loop $\eta_t = \frac{\ell}{t+\ell}$\\ for $\ell\in\N_{\geq 4}$}}\\
     \hline
     \rowcolor{LightCyan}
     \bf{Remark}~\ref{rem:ol_linear} & str. con. & \makecell{$\|\nabla f(x)\|_2 \geq\lambda > 0$\\
     for all $x\in\cC$} & unrestricted & $\cO(e^{-t})$ &  constant\\
     \hline
    \citep{garber2015faster} & str. con. & str. con. & unrestricted & $\cO(1/t^2)$ &  line-search, short-step\\
     \hline
     \bf{Theorem}~\ref{thm:unrestricted} & str. con. & str. con. & unrestricted & $\cO(1/t^2)$ &  open-loop $\eta_t = \frac{4}{t+4}$\\
     \hline
     \rowcolor{LightCyan}
     \citep{wolfe1970convergence} & polytope & str. con. & interior of face & $\Omega(1/t^{1 +\eps})^*$  &  line-search, short-step\\
     \hline
     \rowcolor{LightCyan}
     \citep{bach2021effectiveness} & polytope & str. con. & interior of face & $\cO(1/t^2)^*$  &  open-loop $\eta_t = \frac{2}{t+2}$\\
     \hline
     \rowcolor{LightCyan}
     \bf{Theorem}~\ref{thm:polytope} & polytope & str. con. & interior of face & $\cO(1/t^2)$  &  open-loop $\eta_t = \frac{4}{t+4}$\\
     \hline
\end{tabular}
\normalsize
\caption{Comparison of convergence rates of FW for various settings. We denote the optimal solution by
$x^*\in\argmin_{x\in\cC}f(x)$. Convexity of $\cC$ and convexity and smoothness of $f$ are always assumed.
The big-O notation $\cO(\cdot)^*$ indicates that a result only holds asymptotically, "str. con." is an abbreviation for strongly convex, and "any" refers to line-search, short-step, and open-loop step-size $\eta_t = \frac{2}{t+2}$. Shading is used to group related results and our results are denoted in bold.}
\label{table:references_to_results}
\end{table*}
In this paper, we develop our understanding of settings for which FW with open-loop step-sizes admits acceleration. In particular, our contributions are five-fold:

First, we prove accelerated convergence rates of FW with open-loop step-size rules in settings for which FW with line-search or short-step enjoys accelerated convergence rates. Details are presented in Table~\ref{table:references_to_results}. Most importantly, when the feasible region $\cC$ is strongly convex and the norm of the gradient of the objective $f$  is bounded from below by a nonnegative constant for all $x\in\cC$, the latter of which is, for example, implied by the assumption that the unconstrained optimal solution $\argmin_{x\in\R^d}f(x)$ lies in the exterior of $\cC$, we prove convergence rates of order $\cO(1/t^{\ell/2})$ for FW with open-loop step-sizes $\eta_t=\frac{\ell}{t+\ell}$, where $\ell\in\N_{\geq 1}$. 

Second, under the assumption of strict complementarity, we prove that FW with open-loop step-sizes admits a convergence rate of order $\cO(1/t^2)$ in the setting of the lower bound due to \citet{wolfe1970convergence}, that is, we prove the non-asymptotic version of the result due to \citet{bach2021effectiveness}. We thus characterize a setting for which FW with open-loop step-sizes is non-asymptotically faster than FW with line-search or short-step, see the last three rows of Table~\ref{table:references_to_results} for details.

Third, we return again to the setting of the lower bound due to \citet{wolfe1970convergence}, for which both FW and MFW with open-loop step-sizes admit convergence rates of order $\cO(1/t^2)$, assuming strict complementarity. We demonstrate that the \emph{decomposition-invariant pairwise Frank-Wolfe algorithm} (DIFW) \citep{garber2016linear} and the \emph{away-step Frank-Wolfe algorithm} (AFW) \citep{guelat1986some, lacoste2015global} with open-loop step-sizes converge at rates of order $\cO(1/t^2)$ without the assumption of strict complementarity.

Fourth, we compare FW with open-loop step-sizes to PAFW and MFW for the problems of logistic regression and collaborative filtering. The results indicate that FW with open-loop step-sizes converges at comparable rates as or better rates than PAFW and MFW. 
This implies that faster convergence rates can not only be achieved by studying algorithmic variants of FW but can also be obtained via deeper understanding of vanilla FW and its various step-size rules.

Finally, we provide a theoretical analysis of the accelerated convergence rate of FW with open-loop step-sizes in the kernel herding setting of \citet[Figure 3, right]{bach2012equivalence}.

\subsection{{Outline}}
Preliminaries are introduced in Section~\ref{sec:preliminaries}.
In Section~\ref{sec:accelerated}, we present a proof blueprint for obtaining accelerated convergence rates for FW with open-loop step-sizes.
In Section~\ref{sec:ol_faster_than_ls_ss}, for the setting of the lower bound of \citet{wolfe1970convergence} and assuming strict complementarity, we prove that FW with open-loop step-sizes converges faster than FW with line-search or short-step.
In Section~\ref{sec:fw_variants}, we introduce two algorithmic variants of FW with open-loop step-sizes that admit accelerated convergence rates in the problem setting of the lower bound of \citet{wolfe1970convergence} without relying on strict complementarity.
In Section~\ref{sec:kernel_herding}, we prove accelerated convergence rates for FW with open-loop step-sizes in the infinite-dimensional kernel-herding setting of \citet[Figure 3, right]{bach2012equivalence}. Section~\ref{sec:numerical_experiments_main} contains the numerical experiments. Finally, we discuss our results in Section~\ref{sec:discussion}.

\section{{Preliminaries}}\label{sec:preliminaries}

Throughout, let $d\in \N$. Let $\zeroterm\in\R^d$ denote the all-zeros vector, let $\oneterm \in \R^d$ denote the all-ones vector, and let $\bar{\oneterm}\in \R^d$ be a vector such that $\bar{\oneterm}_i=0$ for all $i \in \{1, \ldots, \lceil d/2\rceil\}$ and $\bar{\oneterm}_{i}=1$ for all $i \in \{\lceil d/2\rceil + 1, \ldots, d\}$. For $i\in\{1,\ldots, d\}$, let $e^{(i)}\in \R^d$ be the $i$th unit vector such that $e^{(i)}_i = 1$ and $e^{(i)}_j = 0$ for all $j \in \{1,\ldots, d\} \setminus \{i\}$. Given a vector $x\in\R^d$, define its support as $\supp(x) = \{i \in \{1,\ldots, d\} \mid x_i \neq 0\}$. Let $I\in \R^{d\times d}$ denote the identity matrix.
Given a set $\cC \subseteq \R^d$, let $\aff(\cC)$,  $\conv(\cC)$, $\mathspan(\cC)$, and $\vertices(\cC)$ denote the affine hull, the convex hull, the span, and the set of vertices of $\cC$, respectively.
For $z\in \R^d$ and $\beta > 0$, the ball of radius $\beta$ around $z$ is defined as
$
    B_\beta(z):= \{x\in \R^d \mid \|x - z\|_2 \leq \beta\}.
$
For the iterates of Algorithm~\ref{algo:fw}, we denote the \emph{primal gap} at iteration $t\in \{0, \ldots, T\}$ by $h_t := f(x_t) - f(x^*)$, where $x^*\in\argmin_{x\in\cC}f(x)$. Finally, for $x\in\R$, let $[x]:= x - \lfloor x\rfloor$. We introduce several definitions.
\begin{definition}[Uniformly convex set]\label{def:unif_cvx_C}
Let $\cC \subseteq \R^d$ be a compact convex set, $\alpha_\cC >0$, and $q>0$. We say that $\cC$ is \emph{$(\alpha_\cC, q)$-uniformly convex} with respect to $\|\cdot\|_2$ if for all $x,y \in \cC$, $\gamma \in [0,1]$, and $z \in \R^d$ such that $\|z\|_2=1$, it holds that
$
    \gamma x + ( 1- \gamma) y + \gamma (1 - \gamma) \alpha_\cC \|x-y\|_2^q z \in \cC.
$
We refer to $(\alpha_\cC, 2)$-uniformly convex sets as \emph{$\alpha_\cC$-strongly convex sets}. 
\end{definition}
\begin{definition}[Smooth function]\label{def:smooth_f}
Let $\cC \subseteq \R^d$ be a compact convex set, let $f\colon \cC \to \R$ be differentiable in an open set containing $\cC$, and let $L > 0$. We say that $f$ is \emph{$L$-smooth} over $\cC$ with respect to $\|\cdot\|_2$ if for all $x,y\in \cC$, it holds that
$
    f(y) \leq f(x) + \langle \nabla f(x), y - x\rangle + \frac{L}{2}\|x-y\|_2^2.
$
\end{definition}
\begin{definition}[Hölderian error bound]\label{def:heb}
Let $\cC \subseteq \R^d$ be a compact convex set, let $f\colon \cC \to \R$ be convex,
let $\mu> 0$, and let $\theta \in [0, 1/2]$. We say that $f$ satisfies a \emph{$(\mu, \theta)$-Hölderian error bound} if for all $x\in \cC$ and $x^*\in\argmin_{x\in\cC}f(x)$, it holds that
\begin{align}\label{eq:heb_original}
    \mu(f(x)-f(x^*))^\theta \geq \min_{y\in\argmin_{z\in\cC}f(z)} \|x-y\|_2 .
\end{align}
\end{definition}
Throughout, for ease of notation, we assume that $x^*\in\argmin_{x\in\cC}f(x)$ is unique. This follows, for example, from the assumption that $f$ is strictly convex. When $x^*\in \argmin_{x\in \cC}f(x)$ is unique, \eqref{eq:heb_original} becomes
\begin{align}\tag{HEB}\label{eq:heb}
    \mu(f(x)-f(x^*))^\theta \geq \|x - x^*\|_2.
\end{align}
An important family of functions satisfying \eqref{eq:heb} is the family of uniformly convex functions, which interpolate between convex functions ($\theta = 0$) and strongly convex functions ($\theta = 1/2$).
\begin{definition}[Uniformly convex function]\label{def:unif_cvx_f}
Let $\cC \subseteq \R^d$ be a compact convex set, let $f\colon \cC \to \R$ be differentiable in an open set containing $\cC$, let $\alpha_f >0$, and let $r\geq 2$. We say that $f$ is \emph{$(\alpha_f, r)$-uniformly convex} over $\cC$ with respect to $\|\cdot\|_2$ if for all $x,y\in \cC$, it holds that
$
    f(y) \geq f(x) + \langle \nabla f(x), y-x\rangle + \frac{\alpha_f}{r}\|x-y\|_2^r.
$
We refer to $(\alpha_f, 2)$-uniformly convex functions as \emph{$\alpha_f$-strongly convex}.
\end{definition}
Note that $(\alpha_f, r)$-uniformly convex functions satisfy a $((r/{\alpha_f})^{1/r},1/r)$-\eqref{eq:heb}:
$
    f(x) - f(x^*)  \geq  \langle \nabla f (x^*), x- x^*\rangle + \frac{\alpha_f}{r}\|x - x^*\|^r_2\geq \frac{\alpha_f}{r}\|x - x^*\|^r_2.
$

\section{{Accelerated convergence rates for FW with open-loop step-sizes}}\label{sec:accelerated}
FW with open-loop step-size rules was already studied by \citet{dunn1978conditional} and currently, two open-loop step-sizes are prevalent, $\eta_t = \frac{1}{t+1}$, for which the best known convergence rate is $\cO\left(\log (t)/t \right)$, and $\eta_t = \frac{2}{t+2}$, for which a faster convergence rate of order $\cO(1/t)$ holds, see, for example, \citet{dunn1978conditional} and \citet{jaggi2013revisiting}, respectively.
In this section, we derive convergence rates for FW with open-loop step-size $\eta_t = \frac{4}{t+4}$. Convergence results for FW with $\eta_t = \frac{\ell}{t+\ell}$ for $\ell\in \N_{\geq 1}$ presented throughout this paper, except for those in Section~\ref{sec:kernel_herding}, can always be generalized (up to a constant) to $\eta_{t} = \frac{j}{t+j}$ for $j\in\N_{\geq \ell}$. 

This section is structured as follows. First, we derive a baseline convergence rate of order $\cO(1/t)$ in Section~\ref{sec:baseline}. Then, in Section~\ref{sec:blueprint}, we present the proof blueprint used throughout most parts of the paper to derive accelerated convergence rates and directly apply our approach to the setting when the objective satisfies \eqref{eq:heb} and the optimal solution $x^*\in\argmin_{x\in\cC} f(x)$ lies in the relative interior of the feasible region. In Section~\ref{sec:exterior}, we prove accelerated rates when the feasible region is uniformly convex and the norm of the gradient of the objective is bounded from below by a nonnegative constant. Finally, in Section~\ref{sec:unconstrained}, we prove accelerated rates when the feasible region is uniformly convex and the objective satisfies \eqref{eq:heb}.
\subsection{Convergence rate of order $\cO(1/t)$}\label{sec:baseline}
We begin the analysis of FW with open-loop step-size rules by first recalling the, to the best of our knowledge, best general convergence rate of the algorithm. 
Consider the setting when $\cC \subseteq \R^d$ is a compact convex set and $f\colon \cC \to \R$ is a convex and $L$-smooth function with unique minimizer $x^*\in\argmin_{x\in\cC}f(x)$. Then, the iterates of Algorithm~\ref{algo:fw} with any step-size $\eta_t \in [0, 1]$ satisfy
\begin{align}\tag{Progress-Bound}\label{eq:start_progress_bound}
    h_{t+1} & \leq h_t - \eta_t \langle\nabla f(x_t), x_t - p_t\rangle + \eta_t^2\frac{\ L \|x_t-p_t\|^2_2}{2},
\end{align}
which follows from the smoothness of $f$. 
With \eqref{eq:start_progress_bound}, it is possible to derive a baseline convergence rate for FW with open-loop step-size $\eta_t = \frac{4}{t+4}$ similar to the one derived by \citet{jaggi2013revisiting} for FW with $\eta_t = \frac{2}{t+2}$.
\begin{proposition}[Convergence rate of order $\cO(1/t)$]\label{prop:generalization_jaggi}
Let $\cC \subseteq \R^d$ be a compact convex set of diameter $\delta > 0$, let $f\colon \cC \to \R$ be a convex and $L$-smooth function with unique minimizer $x^*\in\argmin_{x\in\cC}f(x)$. Let $T\in\N$ and $\eta_t = \frac{4}{t+4}$ for all $t\in\Z$.
Then, for the iterates of Algorithm~\ref{algo:fw} with step-size $\eta_t$, it holds that
$
    h_t \leq \frac{ 8L \delta^2}{t+3} = \eta_{t-1}2L\delta^2
$
for all $t\in\{1,\ldots, T\}$.
\end{proposition}
\begin{proof}
In the literature, the proof is usually done by induction \citep{jaggi2013revisiting}. Here, for convenience and as a brief introduction for things to come, we proceed with a direct approach.
Since $\eta_0 = 1$, by $L$-smoothness, we have
    $h_1 \leq \frac{ L \delta^2}{2}.$
Let $t\in\{1, \ldots, T-1\}$. By optimality of $p_t$ and convexity of $f$, 
$\langle\nabla f(x_t), x_t - p_t\rangle \geq \langle\nabla f(x_t), x_t - x^*\rangle \geq h_t$.
Plugging this bound into \eqref{eq:start_progress_bound} and with $\|x_t - p_t\|_2\leq \delta$, it holds that
\begin{align}
    h_{t+1}&\leq (1-\eta_t)h_t + \eta_t^2\frac{ L \|x_t - p_t\|_2^2}{2} \label{eq:always_combine_with_this}\\
    &  \leq \prod_{i=1}^t(1-\eta_i)h_1 + \frac{L\delta^2}{2}\sum_{i=1}^t \eta_i^2\prod_{j = i +1}^t(1-\eta_j)\nonumber\\
    & \leq \frac{ L \delta^2}{2}(\frac{4!}{(t+1) \cdots (t+4)} + \sum_{i=1}^t \frac{4^2}{(i+4)^2}\frac{(i+1) \cdots (i+4)}{(t+1) \cdots (t+4)}) \nonumber\\
    & \leq 8L \delta^2 ( \frac{1}{(t+4-1)(t+4)} + \frac{t}{(t+4-1)(t+4 )}) \nonumber\\
    & \leq \frac{8L \delta^2}{t+4},\nonumber
\end{align}
where we used that
$\prod_{j = i+1}^t (1 - \eta_j) =  \frac{(i+1) (i+2) \cdots t}{(i+5) (i+6) \cdots (t+4)}  = \frac{(i+1)(i+2)(i+3)(i+4)}{(t+1)(t+2)(t+3)(t+4)}$.
\end{proof}
To prove accelerated convergence rates for FW with open-loop step-sizes, we require bounds on the \emph{Frank-Wolfe gap} (FW gap) $\max_{p\in\cC} \langle\nabla f(x_t), x_t - p \rangle$, which appears in the middle term in \eqref{eq:start_progress_bound}.

\subsection{{Optimal solution in the relative interior -- a blueprint for acceleration}}\label{sec:blueprint}

Traditionally, to prove accelerated convergence rates for FW with line-search or short-step, the geometry of the feasible region, curvature assumptions on the objective function, and information on the location of the optimal solution are exploited \citep{levitin1966constrained, demianov1970approximate, guelat1986some, garber2015faster}. A similar approach leads to acceleration results for FW with open-loop step-sizes, however, requiring a different proof technique as FW with open-loop step-sizes is not monotonous in primal gap. Here, we introduce the proof blueprint used to derive most of the accelerated rates in this paper via the setting when the objective $f$ satisfies \eqref{eq:heb} and the minimizer of $f$ is in the relative interior of the feasible region $\cC$.

Our goal is to bound the FW gap to counteract the error accumulated from the right-hand term in \eqref{eq:start_progress_bound}.
More formally, we prove the existence of $\phi > 0$, such that there exists an iteration $\fwt \in \N$ such that for all iterations $t\geq \fwt$ of FW, it holds that 
\begin{align}\tag{Scaling}\label{eq:scaling}
    \frac{\langle \nabla f(x_t) , x_t - p_t \rangle}{\|x_t - p_t\|_2} \geq \phi  \frac{\langle \nabla f(x_t) , x_t - x^* \rangle}{\|x_t - x^*\|_2}.
\end{align}
Inequalities that bound \eqref{eq:scaling} from either side are referred to as \emph{scaling inequalities}.
Intuitively speaking, scaling inequalities relate the \emph{FW direction} $\frac{p_t - x_t}{\|p_t-x_t\|_2}$ with the \emph{optimal descent direction} $\frac{x^*-x_t}{\|x^*-x_t\|_2}$. Scaling inequalities stem from the geometry of the feasible region, properties of the objective function, or information on the location of the optimal solution.
The scaling inequality below exploits the latter property.
\begin{lemma}[\citealp{guelat1986some}]\label{lemma:GM}
    Let $\cC \subseteq \R^d$ be a compact convex set of diameter $\delta > 0$, let $f\colon \cC \to \R$ be a convex and $L$-smooth function with unique minimizer $x^*\in\argmin_{x\in\cC}f(x)$, and suppose that there exists $\beta>0$ such that
$\aff (\cC) \cap B_\beta(x^*)\subseteq \cC$.
Then, for all $x\in \cC\cap B_\beta(x^*)$, it holds that
\begin{equation}\tag{Scaling-INT}\label{eq:scaling_int}
    \frac{\langle \nabla f(x), x - p\rangle}{\|x - p\|_2} \geq \frac{\beta}{\delta} \|\nabla f(x)\|_2,
\end{equation}
where $p \in \argmin_{v\in \cC} \langle \nabla f(x), v \rangle$.
    \end{lemma}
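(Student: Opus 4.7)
The plan is to exhibit an explicit ``competitor'' point of $\cC$ located near $x^*$ in the steepest-descent direction of $\nabla f(x)$, use the optimality of $p$ as an LMO output to force a gap in its favor, and then close the argument via a convexity bound on $\langle \nabla f(x), x - x^*\rangle$. The ball hypothesis $B_\beta(x^*)\cap\aff(\cC)\subseteq \cC$ is the only geometric ingredient needed; the diameter $\delta$ enters only at the very end to normalize $\|x-p\|_2$.

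First I would dispose of the trivial case $\nabla f(x)=0$, for which both sides of \eqref{eq:scaling_int} vanish. Assuming $\nabla f(x)\neq 0$, I would define the competitor
\[
y \;:=\; x^* - \beta\, \frac{\nabla f(x)}{\|\nabla f(x)\|_2}.
\]
(In the general case where $\aff(\cC)\subsetneq \R^d$, one replaces $\nabla f(x)$ above by its orthogonal projection onto the linear subspace $V$ parallel to $\aff(\cC)$; this is harmless because $x-p$ and $x^*-p$ already lie in $V$, so only the projected component of $\nabla f(x)$ ever contributes to the inner products below.) Then $\|y-x^*\|_2=\beta$ and $y\in\aff(\cC)$, so the standing assumption $B_\beta(x^*)\cap \aff(\cC)\subseteq \cC$ gives $y\in\cC$.

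Next, since $p\in\argmin_{v\in\cC}\langle \nabla f(x), v\rangle$ and $y\in\cC$, we have $\langle \nabla f(x), p\rangle \leq \langle \nabla f(x), y\rangle$, equivalently
\[
\langle \nabla f(x), x^*-p\rangle \;\geq\; \langle \nabla f(x), x^* - y\rangle \;=\; \beta\, \|\nabla f(x)\|_2.
\]
Splitting $\langle\nabla f(x), x-p\rangle = \langle\nabla f(x), x-x^*\rangle + \langle\nabla f(x), x^*-p\rangle$ and using convexity of $f$ together with $x^*=\argmin_{x\in \cC}f(x)$ to get $\langle \nabla f(x), x-x^*\rangle \geq f(x)-f(x^*) \geq 0$, I obtain $\langle\nabla f(x), x-p\rangle \geq \beta\, \|\nabla f(x)\|_2$. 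Dividing by $\|x-p\|_2\leq \delta$ then yields \eqref{eq:scaling_int}.

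I do not anticipate any real obstacle: the argument is a short first-order/geometric reasoning that combines the interior-ball condition with LMO optimality and one convexity inequality. The only subtle point worth flagging is the treatment of the affine hull when $\aff(\cC)\subsetneq \R^d$, which is handled by working with the projection of $\nabla f(x)$ onto $V$ as indicated above. Note also that the hypothesis $x\in B_\beta(x^*)$ appearing in the statement is not actually needed for the scaling inequality itself and may be retained simply because downstream applications of the lemma invoke it in a regime where the iterates have already entered this ball.
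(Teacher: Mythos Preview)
The paper does not actually prove this lemma; it is stated with a citation to Gu\'elat and Marcotte and then used without proof. Your argument is precisely the classical one: exhibit the competitor $y=x^*-\beta\,\nabla f(x)/\|\nabla f(x)\|_2\in\cC$, invoke LMO optimality of $p$ against $y$ to get $\langle\nabla f(x),x^*-p\rangle\ge\beta\|\nabla f(x)\|_2$, add the convexity bound $\langle\nabla f(x),x-x^*\rangle\ge 0$, and divide by $\|x-p\|_2\le\delta$. This is correct, and your remark that the hypothesis $x\in B_\beta(x^*)$ plays no role in the inequality itself is accurate; in the paper it is only used downstream (to guarantee $\|x_t-p_t\|_2\ge\beta/2$ in Theorem~\ref{thm:interior}).

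One small caveat on the affine-hull parenthetical: replacing $\nabla f(x)$ by its projection $\Pi\nabla f(x)$ onto the subspace parallel to $\aff(\cC)$ yields the bound $(\beta/\delta)\|\Pi\nabla f(x)\|_2$, not $(\beta/\delta)\|\nabla f(x)\|_2$, so the lemma \emph{as literally stated} needs $\aff(\cC)=\R^d$ (or $\nabla f(x)$ read as the gradient within the affine hull). This is cosmetic for the paper's purposes, since the only downstream use chains \eqref{eq:scaling_int} with \eqref{eq:scaling_heb}, and the latter already factors through $\langle\nabla f(x),x-x^*\rangle/\|x-x^*\|_2\le\|\Pi\nabla f(x)\|_2$.
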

    
Below, we prove that there exists $\fwt \in \N$ such that for all $t\geq \fwt$, $x_t \in B_\beta(x^*)$ and \eqref{eq:scaling_int} is satisfied.
\begin{lemma}\label{lemma:dist_to_opt}
Let $\cC \subseteq \R^d$ be a compact convex set of diameter $\delta > 0$, let $f\colon \cC \to \R$ be a convex and $L$-smooth function satisfying a $(\mu, \theta)$-\eqref{eq:heb} for some $\mu > 0 $ and $\theta \in ]0, 1/2]$ with unique minimizer $x^*\in\argmin_{x\in\cC}f(x)$, and let $\beta > 0$.
Let $\fwt = \lceil 8L \delta^2\left(\mu / \beta\right)^{1/\theta} \rceil$,
$T\in\N$, and $\eta_t = \frac{4}{t+4}$ for all $t\in\Z$.
Then, for the iterates of Algorithm~\ref{algo:fw} with step-size $\eta_t$, it holds that $\|x_t -x^*\|_2 \leq \beta$ for all $t \in\{\fwt,\ldots, T\}$.
\end{lemma}
\begin{proof}
By \eqref{eq:heb} and Proposition~\ref{prop:generalization_jaggi},
$\|x_t - x^*\|_2 \leq \mu h_t^\theta \leq \mu (\frac{8 L \delta^2 }{ 8L \delta^2(\mu/\beta)^{1/\theta} })^\theta \leq \beta$ for all $t\in\{\fwt,\ldots, T\}$.
\end{proof} 

The second scaling inequality follows from the objective satisfying \eqref{eq:heb}.
\begin{lemma}\label{lemma:heb_to_grad}
Let $\cC\subseteq \R^d$ be a compact convex set and let $f\colon \cC \to \R$ be a convex function satisfying a $(\mu, \theta)$-\eqref{eq:heb} for some $\mu > 0 $ and $\theta \in [0, 1/2]$ with unique minimizer $x^*\in\argmin_{x\in\cC}f(x)$. Then, for all $x\in \cC$, it holds that
\begin{align}\label{eq:scaling_heb}
    \|\nabla f (x)\|_2 &  \geq \frac{\langle \nabla f(x), x - x^*\rangle}{\|x - x^*\|_2}\geq \frac{1}{\mu}(f(x) - f(x^*))^{{1-\theta}}.\tag{Scaling-HEB}
\end{align}
\end{lemma}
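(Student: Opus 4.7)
The plan is to handle the two inequalities separately, as they rely on completely different tools. The left inequality is purely a Cauchy--Schwarz statement, while the right inequality is where the \eqref{eq:heb} assumption and convexity come into play.

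First I would dispense with the upper bound $\|\nabla f(x)\|_2 \geq \frac{\langle \nabla f(x), x-x^*\rangle}{\|x-x^*\|_2}$. This is immediate from the Cauchy--Schwarz inequality applied to $\langle \nabla f(x), x - x^*\rangle$, after dividing by $\|x-x^*\|_2$ (the case $x = x^*$ is trivial since both sides of the target chain vanish and need not be addressed separately beyond noting that the statement is vacuous there).

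For the lower bound, the natural starting point is the first-order characterization of convexity, applied at $x$ and evaluated at $x^*$:
\begin{equation*}
    f(x^*) \geq f(x) + \langle \nabla f(x), x^* - x\rangle,
\end{equation*}
which rearranges to $\langle \nabla f(x), x - x^*\rangle \geq f(x) - f(x^*)$. Dividing by $\|x - x^*\|_2$ and then invoking \eqref{eq:heb} in the form $\|x - x^*\|_2 \leq \mu (f(x)-f(x^*))^\theta$ produces
\begin{equation*}
    \frac{\langle \nabla f(x), x - x^*\rangle}{\|x-x^*\|_2} \geq \frac{f(x)-f(x^*)}{\mu(f(x)-f(x^*))^\theta} = \frac{1}{\mu}(f(x)-f(x^*))^{1-\theta},
\end{equation*}
which is exactly the desired bound.

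There is no real obstacle here; the lemma is essentially a two-line calculation once one notices that convexity turns the functional gap $f(x)-f(x^*)$ into an inner-product gap, and that \eqref{eq:heb} is precisely what is needed to trade a factor of $\|x-x^*\|_2$ in the denominator for a power of the functional gap. The only small care point is the edge case $x = x^*$, where both the middle and right expressions should be interpreted as $0$ (using the convention $0^{1-\theta}=0$ for $\theta \in [0,1/2]$), so that the inequality still holds trivially.
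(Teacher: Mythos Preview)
Your proof is correct and follows essentially the same approach as the paper: convexity gives $f(x)-f(x^*)\le \langle\nabla f(x),x-x^*\rangle$, and then \eqref{eq:heb} is used to replace $\|x-x^*\|_2$ by $\mu(f(x)-f(x^*))^\theta$ before dividing through. You are in fact slightly more thorough than the paper, which does not explicitly mention the Cauchy--Schwarz step for the left inequality or the degenerate case $x=x^*$.
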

\begin{proof}
The statement holds for $x=x^*$. For $x\in\cC\setminus \{x^*\}$, by convexity and \eqref{eq:heb},
$f(x) - f(x^*)  \leq
    \frac{\langle \nabla f (x), x-x^*\rangle}{\|x-x^*\|_2} \|x-x^*\|_2  \leq \frac{\langle \nabla f (x), x-x^*\rangle}{\|x-x^*\|_2} \mu (f(x) - f(x^*))^{\theta}$.
Dividing by $\mu(f(x)-f(x^*))^\theta$ yields \eqref{eq:scaling_heb}.
\end{proof}
For $t\in\{\fwt,\ldots, T-1\}$, where 
$\fwt = \lceil 8L \delta^2\left(2\mu / \beta\right)^{1/\theta} \rceil$,
we plug \eqref{eq:scaling_int} and \eqref{eq:scaling_heb} into \eqref{eq:start_progress_bound} to obtain 
$h_{t+1}  \leq h_t - \eta_t \frac{\beta^2}{2\mu \delta} h_t^{1-\theta} + \eta_t^2\frac{ L \delta^2}{2}$.
Combined with \eqref{eq:always_combine_with_this}, we have
\begin{align}\label{eq:int_sequence}
    h_{t+1} & \leq (1 -\frac{\eta_t}{2}) h_t - \eta_t \frac{\beta^2}{4\mu \delta} h_t^{1-\theta} + \eta_t^2\frac{ L \delta^2}{2}
\end{align}
for all $t\in\{\fwt,\ldots, T-1\}$. If the primal gaps of FW with open-loop step-sizes satisfy an inequality of this type, the lemma below implies accelerated convergence rates.
\begin{lemma}\label{lemma:sequences}
Let $\psi \in [0, 1/2]$, $\fwt, T \in \N_{\geq 1}$, and $\eta_t = \frac{4}{t+4}$ for all $t\in\Z$. Suppose that there exist constants $A, B, C > 0$, a nonnegative sequence $\{C_t\}_{t=\fwt}^{T-1}$ such that $C \geq C_t \geq 0$ for all $t\in\{\fwt,\ldots,T-1\}$, and a nonnegative sequence $\{h_t\}_{t = \fwt}^{T}$ such that
\begin{align}\label{eq:gotta_derive_this}
    h_{t+1} &  \leq (1 - \frac{\eta_t}{2})h_t - \eta_t AC_t h_t^{1-\psi} + \eta_t^2 BC_t
\end{align}
for all $t \in\{\fwt,  \ldots, T-1\}$. Then, 
\begin{align}\label{eq:cd_simple}
    h_t  \leq \max \left\{  \left(\frac{\eta_{t-2}}{\eta_{\fwt-1}}\right)^{1/(1-\psi)}h_\fwt, \left(\frac{\eta_{t-2} B}{A}\right)^{1/(1-\psi)} + \eta_{t-2}^2 BC\right\}
    \end{align}
for all $t \in\{\fwt, \ldots, T\}$.
\end{lemma}
\begin{proof}
For all $t\in\{\fwt, \ldots, T\}$, we first prove that 
\begin{align}\label{eq:cd}
    h_t & \leq \max \left\{ \left(\frac{\eta_{t-2}\eta_{t-1}}{\eta_{\fwt-2}\eta_{\fwt-1}}\right)^{1/(2(1-\psi))}h_\fwt, \left(\frac{\eta_{t-2}\eta_{t-1} B^2}{A^2}\right)^{1/(2(1-\psi))} + \eta_{t-2 }\eta_{t-1} BC\right\} ,
\end{align}
which then implies \eqref{eq:cd_simple}.
The proof is a straightforward modification of Footnote $3$ in the proof of Proposition $2.2$ in \citet{bach2021effectiveness} and is by induction. The base case of \eqref{eq:cd} with $t = \fwt$ is immediate, even if $\fwt= 1$, as $\eta_{-1}\geq \eta_{0} = 1$.
Suppose that \eqref{eq:cd} is correct for a specific iteration $t\in\{\fwt,  \ldots, T-1\}$. We distinguish between two cases.
First, suppose that 
$h_t \leq (\frac{\eta_t B}{A})^{1/(1-\psi)}$.
Plugging this bound into \eqref{eq:gotta_derive_this}, we obtain
$h_{t+1} \leq (1-\frac{\eta_t}{2}) h_t - 0 + \eta_t^2 BC_t\leq  (\frac{\eta_t B}{A})^{1/(1-\psi)} + \eta_t^2 BC \leq (\frac{\eta_{t-1}\eta_t B^2}{A^2})^{1/(2(1-\psi))} + \eta_{t-1}\eta_t BC$.
Next, suppose that
$h_t \geq  (\frac{\eta_t B}{A})^{1/(1-\psi)}$ instead.
Plugging this bound on $h_t$ into \eqref{eq:gotta_derive_this} and using the induction assumption \eqref{eq:cd} at iteration $t$ yields
\begin{align*}
    h_{t+1} &\leq  \left(1 - \frac{\eta_t}{2}\right)h_t  -\eta_t A C_t \frac{\eta_t B}{A} + \eta_t^2 B C_t\\
    &= \frac{t+2}{t+4} h_t \\
    & = \frac{\eta_{t}}{\eta_{t-2}} h_t\\
    &\leq  \frac{\eta_{t}}{\eta_{t-2}} \max \left\{ \left(\frac{\eta_{t-2}\eta_{t-1}}{\eta_{\fwt-2}\eta_{\fwt-1}}\right)^{1/(2(1-\psi))}h_\fwt, \left(\frac{\eta_{t-2}\eta_{t-1} B^2}{A^2}\right)^{1/(2(1-\psi))} + \eta_{t-2 }\eta_{t-1} BC\right\}\\
    & \leq \max \left\{ \left(\frac{\eta_{t-1}\eta_{t}}{\eta_{\fwt-2}\eta_{\fwt-1}}\right)^{1/(2(1-\psi))}h_\fwt, \left(\frac{\eta_{t-1}\eta_{t} B^2}{A^2}\right)^{1/(2(1-\psi))} + \eta_{t-1 }\eta_{t} BC\right\},
\end{align*}
where the last inequality holds due to $\frac{\eta_t}{\eta_{t-2}}(\eta_{t-2}\eta_{t-1})^{1/(2(1-\psi))} \leq (\eta_{t-1}\eta_{t})^{1/(2(1-\psi))}$ for $\frac{\eta_t}{\eta_{t-2}}\in [0,1]$ and $1/(2(1-\psi)) \in [1/2,1]$.
In either case, \eqref{eq:cd} is satisfied for $t+1$. By induction, the lemma follows.
\end{proof}
We conclude the presentation of our proof blueprint by stating the first accelerated convergence rate for FW with open-loop step-size $\eta_t = \frac{4}{t+4}$ when the the objective function $f$ satisfies \eqref{eq:heb} and the minimizer lies in the relative interior of the feasible region $\cC$. For this setting,
FW with line-search or short-step converges linearly if the objective function is strongly convex \citep{guelat1986some,garber2015faster}.
Further, FW with open-loop step-size $\eta_t = \frac{1}{t+1}$ converges at a rate of order $\cO(1/t^2)$ when the objective is of the form $f(x) = \frac{1}{2}\|x-b\|_2^2$ for some $b\in \cC$ \citep{chen2012super}.

\begin{theorem}[Optimal solution in the relative interior of $\cC$]\label{thm:interior}
Let $\cC \subseteq \R^d$ be a compact convex set of diameter $\delta > 0$, let $f\colon \cC \to \R$ be a convex and $L$-smooth function satisfying a $(\mu, \theta)$-\eqref{eq:heb} for some $\mu > 0 $ and $\theta \in ]0, 1/2]$ with unique minimizer $x^*\in\argmin_{x\in\cC}f(x)$, and suppose that there exists $\beta>0$ such that
$\aff (\cC)  \cap B_\beta(x^*) \subseteq \cC$.
Let
$\fwt = \lceil 8L \delta^2\left(2\mu / \beta\right)^{1/\theta} \rceil$,
$T\in\N$, and $\eta_t = \frac{4}{t+4}$ for all $t\in\Z$.
Then, for the iterates of Algorithm~\ref{algo:fw} with step-size $\eta_t$, it holds that
 \begin{align}\label{eq:interior_sol}
     h_t \leq \max \Bigg\{ & \left(\frac{\eta_{t-2}}{\eta_{\fwt-1}}\right)^{1/(1-\theta)} h_\fwt, \left(\frac{\eta_{t-2}2 \mu L\delta^3}{\beta^2}\right)^{1/(1-\theta)} + \eta_{t-2}^2 \frac{L\delta^2}{2}\Bigg\}
 \end{align}
 for all $t\in\{\fwt, \ldots, T\}$.
\end{theorem}
\begin{proof}
Let $t\in\{\fwt, \ldots, T-1\}$. By Lemma~\ref{lemma:dist_to_opt}, $\|x_t -x^*\|_2 \leq \beta / 2$  and, by triangle inequality, we have $\|x_t - p_t\|_2 \geq \beta / 2$. Thus,
for all $t\in\{\fwt, \ldots, T\}$, it follows that \eqref{eq:int_sequence} holds.
We apply Lemma~\ref{lemma:sequences} with $A = \frac{\beta^2}{4 \mu \delta}$, $B = \frac{L\delta^2}{2}$, $C= 1$, $C_t = 1$ for all $t\in\{\fwt, \ldots, T-1\}$, and $\psi = \theta$, resulting in \eqref{eq:interior_sol} holding
for all $t\in\{\fwt, \ldots, T\}$. 
\end{proof}
We complement Theorem~\ref{thm:interior} with a discussion on the lower bound of the convergence rate of FW when the optimal solution is in the relative interior of the probability simplex.
\begin{lemma}[\citealp{jaggi2013revisiting}]\label{lemma:lb_jaggi}
Let $\cC\subseteq \R^d$ be the probability simplex, $f(x) = \|x\|_2^2$, and $t\in\{1,\ldots, d\}$. It holds that
$\min_{\substack{x \in \cC \\ |\supp (x)| \leq t}} f(x) = \frac{1}{t}$,
where $|\supp(x)|$ denotes the number of non-zero entries of $x$.
\end{lemma}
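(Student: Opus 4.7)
The plan is to establish the claim by exhibiting matching upper and lower bounds on the minimum value. For the upper bound, I would write down an explicit feasible candidate — the uniform distribution on the first $t$ coordinates,
\[
    x^\star = \frac{1}{t}\sum_{i=1}^{t} e^{(i)},
\]
which lies in the probability simplex, has exactly $t$ non-zero entries, and satisfies $f(x^\star) = t \cdot (1/t)^2 = 1/t$. This gives $\min_{x \in \cC,\, \card(x) \leq t} f(x) \leq 1/t$.

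For the matching lower bound, I would fix an arbitrary feasible $x \in \cC$ with $\card(x) = k \leq t$ and support $S = \{i : x_i > 0\} \subseteq \{1,\ldots,d\}$. Since the entries of $x$ are non-negative and sum to $1$, applying the Cauchy--Schwarz inequality to the vectors $(x_i)_{i \in S}$ and $(1)_{i \in S}$ in $\R^k$ yields
\[
    1 = \Bigl(\sum_{i \in S} x_i\Bigr)^2 \leq k \sum_{i \in S} x_i^2 = k \|x\|_2^2,
\]
so that $f(x) = \|x\|_2^2 \geq 1/k \geq 1/t$, where the last step uses the sparsity constraint $k \leq t$.

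No serious obstacle is anticipated; the argument is essentially a one-line application of Cauchy--Schwarz together with the exhibited minimizer. The only conceptual point worth emphasizing is the role of the sparsity constraint: the lower bound $\|x\|_2^2 \geq 1/k$ improves as $k$ grows, so without the restriction $\card(x) \leq t$ the infimum would be $1/d$, attained at the uniform distribution over all $d$ coordinates. It is this sparsity restriction that ultimately yields the $\Omega(1/t)$ lower bound on the Frank-Wolfe primal gap for the first $d$ iterations, because an iterate $x_t$ produced by Algorithm~\ref{algo:fw} initialized at a vertex of $\cC$ is a convex combination of at most $t+1$ vertices of the simplex and therefore has cardinality at most $t+1$.
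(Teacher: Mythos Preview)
Your proof is correct. The paper does not actually supply a proof of this lemma; it is stated as a citation to \citet{jaggi2013revisiting} and used only to motivate the subsequent remark on compatibility with the lower bound. Your Cauchy--Schwarz argument together with the explicit uniform minimizer is the standard way to establish this fact, and nothing more is needed.
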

\begin{remark}[{Compatibility with lower bound from \citet{jaggi2013revisiting}}]\label{rem:jaggi_interior}
In Lemma~\ref{lemma:lb_jaggi}, the optimal solution $x^* = \frac{1}{d}\oneterm\in \R^d$ lies in the relative interior of $\cC$ and $\min_{x\in \cC}f(x) = 1/d$. When $\cC$ is the probability simplex, all of its vertices are of the form $e^{(i)} = (0, \ldots, 0 , 1, 0 , \ldots, 0)^\intercal \in \R^d$, $i\in\{1, \ldots, d\}$.
Thus, any iteration of FW can modify at most one entry of iterate $x_t$ and the primal gap is at best $h_t = 1/t -1/d$ for $t\in\{1, \ldots, d\}$.
Applying Theorem~\ref{thm:interior}
to the setting of Lemma~\ref{lemma:lb_jaggi}, we observe that $\beta =1/d$ and acceleration starts only after $\fwt = \Omega(d^{1/\theta}) \geq  \Omega(d)$ iterations. Thus, Theorem~\ref{thm:interior} does not contradict Lemma~\ref{lemma:lb_jaggi}.
\end{remark}
\subsection{Unconstrained minimizer in the exterior -- lower-bounded gradient norm}\label{sec:exterior}
In this section, we apply the proof blueprint from the previous section to the setting when the feasible region $\cC$ is uniformly convex and the norm of the gradient of $f$ is bounded from below by a nonnegative constant.

For this setting, FW with line-search or short-step converges linearly when the feasible region is also strongly convex \citep{levitin1966constrained, demianov1970approximate, garber2015faster}. When the feasible region is only uniformly convex, rates interpolating between $\cO(1/t)$ and linear convergence are known \citep{kerdreux2021projection}.
Two FW variants employ open-loop step-sizes and enjoy accelerated convergence rates of order up to $\cO(1/t^2)$ when the feasible region $\cC$ is uniformly convex and the norm of the gradient of $f$ is bounded from below by a nonnegative constant: the primal-averaging Frank-Wolfe algorithm (PAFW) \citep{lan2013complexity, kerdreux2021local}, presented in Algorithm~\ref{algo:pafw}, and the momentum-guided FW algorithm (MFW) \citep{li2021momentum}, presented in Algorithm~\ref{algo:mfw}.
Below, for the same setting, we prove that FW with open-loop step-size $\eta_t = \frac{4}{t+4}$ also admits accelerated convergence rates of order up to $\cO(1/t^2)$ depending on the uniform convexity of the feasible region. Furthermore, when the feasible region is strongly convex, we prove that FW with open-loop step-size $\eta_t=\frac{\ell}{t+\ell}$, where $\ell\in\N_{\geq 2}$, converges at a rate of order $\cO(1/t^{\ell/2})$, which is faster than the convergence rates known for PAFW and MFW.
To prove these results, we require two new scaling inequalities, the first of which follows directly from the assumption that the norm of the gradient of $f$ is bounded from below by a nonnegative constant. More formally, let $\cC\subseteq \R^d$ be a compact convex set and let $f\colon \cC \to \R$ be a convex and $L$-smooth function such that there exists $\lambda > 0$ such that for all $x\in \cC$, 
\begin{equation}\tag{Scaling-EXT}\label{eq:scaling_ext}
    \|\nabla f(x)\|_2 \geq \lambda.
\end{equation}
In case $f$ is well-defined, convex, and differentiable on $\R^d$, \eqref{eq:scaling_ext} is, for example, implied by the convexity of $f$ and the assumption that the unconstrained minimizer of $f$, that is, $\argmin_{x\in\R^d} f(x)$, lies in the exterior of $\cC$.
The second scaling inequality follows from the uniform convexity of the feasible region and is proved in the proof of \citet[Theorem 2.2]{kerdreux2021projection} in FW gap. The result stated below is then obtained by bounding the FW gap from below with the primal gap.

\begin{figure*}[t!]
\begin{minipage}[t]{0.46\textwidth}
\begin{algorithm}[H]
\SetKwInput{Input}{Input}
\SetKwInput{Output}{Output}
\caption{Primal-averaging Frank-Wolfe algorithm (PAFW) \citep{lan2013complexity}}\label{algo:pafw}
  \Input{$x_0\in \cC$, step-sizes $\eta_t = \frac{\ell}{t+\ell}$, where $\ell\in\N_{\geq 1}$, for $t\in\{0,\ldots, T-1\}$.}
  \hrulealg
  $v_0 \gets x_0$\\
  \For{$t= 0, \ldots, T-1 $}{
    $y_t \gets (1-\eta_t) x_t + \eta_t v_t$\\
    $w_{t+1} \gets \nabla f(y_t)$\label{line:pafw_w}\\
    $v_{t+1}\in\argmin_{v\in\cC} \langle w_{t+1}, v\rangle$\\
    $x_{t+1}\gets(1-\eta_t) x_t + \eta_t v_{t+1}$}
\end{algorithm}
\end{minipage}
\hfil
\begin{minipage}[t]{0.46\textwidth}
\begin{algorithm}[H]
\SetKwInput{Input}{Input}
\SetKwInput{Output}{Output}
\caption{Momentum-guided Frank-Wolfe algorithm (MFW) \citep{li2021momentum}}\label{algo:mfw}
  \Input{$x_0\in \cC$, step-sizes $\eta_t = \frac{\ell}{t+\ell}$, where $\ell\in\N_{\geq 1}$, for $t\in\{0,\ldots, T-1\}$.}
  \hrulealg
  $v_0 \gets x_0$; $w_0 \gets \zeroterm$\\
  \For{$t= 0, \ldots, T-1 $}{
    $y_t \gets (1-\eta_t) x_t + \eta_t v_t$\\
    $w_{t+1} \gets (1-\eta_t)w_t + \eta_t \nabla f(y_t)$\label{line:mfw_w}\\
    $v_{t+1}\in\argmin_{v\in\cC} \langle w_{t+1}, v\rangle$\\
    $x_{t+1}\gets(1-\eta_t) x_t + \eta_t v_{t+1}$}
\end{algorithm}
\end{minipage}
\end{figure*}

\begin{lemma}[\citealp{kerdreux2021projection}]\label{lemma:unif_convexity}
For $\alpha >0$ and $q\geq 2$, let $\cC \subseteq \R^d$ be a compact $(\alpha,q)$-uniformly convex set and let $f\colon \cC \to \R$ be a convex function that is differentiable in an open set containing $\cC$ with unique minimizer $x^*\in\argmin_{x\in\cC}f(x)$. Then, for all $x\in \cC$, it holds that
    \begin{equation}\tag{Scaling-UNIF}\label{eq:scaling_unif}
           \frac{\langle \nabla f(x), x-p\rangle}{\|x-p\|_2^2} \geq  \left(\frac{\alpha}{2}\|\nabla f(x)\|_2\right)^{2/q} (f(x)-f(x^*))^{1-2/q},
    \end{equation}
where $p \in \argmin_{v\in \cC} \langle \nabla f(x), v \rangle$.
\end{lemma}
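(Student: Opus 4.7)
The plan is to derive, from the $(\alpha,q)$-uniform convexity of $\cC$ together with the optimality of $p$, a pointwise lower bound on the Frank-Wolfe gap in terms of $\|x-p\|_2^q\,\|\nabla f(x)\|_2$, and then combine it with the convexity bound $\langle \nabla f(x), x-p\rangle \geq f(x)-f(x^*)$ by raising one copy to the power $2/q$; this interpolation is what produces the exponents $2/q$ and $1-2/q$ on the right-hand side of \eqref{eq:scaling_unif}.

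First, I would apply Definition~\ref{def:unif_cvx_C} with $y=p$ and the unit vector $z = -\nabla f(x)/\|\nabla f(x)\|_2$. For every $\gamma \in [0,1]$ the perturbed combination
\[
y_\gamma \;=\; \gamma x + (1-\gamma) p + \gamma(1-\gamma)\,\tfrac{\alpha}{2}\,\|p-x\|_2^q\, z
\]
lies in $\cC$, so optimality of $p$ for $\langle \nabla f(x),\cdot\rangle$ gives $\langle \nabla f(x), y_\gamma - p\rangle \geq 0$. Expanding the inner product, using $\langle \nabla f(x), z\rangle = -\|\nabla f(x)\|_2$, and then dividing by $\gamma > 0$ and sending $\gamma \downarrow 0$ yields the key one-sided bound
\[
\langle \nabla f(x), x-p\rangle \;\geq\; \tfrac{\alpha}{2}\,\|x-p\|_2^q\,\|\nabla f(x)\|_2.
\]

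Second, I would raise this inequality to the power $2/q$ to get $\|x-p\|_2^2 \leq (2/(\alpha\|\nabla f(x)\|_2))^{2/q}\,\langle \nabla f(x), x-p\rangle^{2/q}$, and divide through to obtain
\[
\frac{\langle \nabla f(x), x-p\rangle}{\|x-p\|_2^2} \;\geq\; \left(\frac{\alpha\|\nabla f(x)\|_2}{2}\right)^{2/q}\,\langle \nabla f(x), x-p\rangle^{\,1-2/q}.
\]
In the regime $q\geq 2$ the exponent $1-2/q$ is nonnegative, so I can substitute the convexity bound $\langle \nabla f(x), x-p\rangle \geq \langle \nabla f(x), x-x^*\rangle \geq f(x)-f(x^*)$ on the right to obtain exactly \eqref{eq:scaling_unif}.

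The main subtle point is the first step: one has to choose $z$ antiparallel to $\nabla f(x)$ (not an arbitrary unit vector) and pass $\gamma \downarrow 0$ carefully so that the $(1-\gamma)$ factor dissolves, recovering the tight constant $\alpha/2$ rather than the strictly worse $\alpha/4$ that a fixed $\gamma = 1/2$ would produce. The rest is routine exponent bookkeeping, although it does rely implicitly on $q\geq 2$, which is the standard regime for $(\alpha,q)$-uniformly convex sets.
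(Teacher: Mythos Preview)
Your argument is correct and is precisely the standard route: first establish the key inequality $\langle \nabla f(x), x-p\rangle \geq \tfrac{\alpha}{2}\|x-p\|_2^q\,\|\nabla f(x)\|_2$ from uniform convexity and optimality of $p$, then raise to the $2/q$ power and insert the convexity bound $\langle \nabla f(x), x-p\rangle \geq f(x)-f(x^*)$. The paper does not give its own proof of this lemma but merely cites \citet[Theorem~2.2 and Lemma~2.1]{kerdreux2021projection}; your write-up is exactly a self-contained reconstruction of that argument, including the correct observation that the final substitution requires $q\geq 2$ (which is the only regime the paper ever uses).
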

Combining \eqref{eq:scaling_ext} and \eqref{eq:scaling_unif}, we derive the following accelerated convergence result.
\begin{theorem}[Norm of the gradient of $f$ is bounded from below by a nonnegative constant]\label{thm:exterior}
For $\alpha >0$ and $q \geq 2$, let $\cC \subseteq \R^d$ be a compact $(\alpha,q)$-uniformly convex set of diameter $\delta > 0$, let $f\colon \cC \to \R$ be a convex and $L$-smooth function with lower-bounded gradients, that is, $\|\nabla f(x)\|_2 \geq \lambda$ for all $x\in \cC$ for some $\lambda > 0$, with unique minimizer $x^*\in\argmin_{x\in\cC}f(x)$. 
Let $T\in\N$ and $\eta_t = \frac{4}{t+4}$ for all $t\in\Z$.
Then, for the iterates of Algorithm~\ref{algo:fw} with step-size $\eta_t$, when $q \geq 4$, it holds that
\begin{align}\label{eq:ext_q_greater_4}
    h_t & \leq \max \left\{ \eta_{t-2}^{1/(1-2/q)}\frac{L\delta^2}{2}, \left(\eta_{t-2} L \left( \frac{2}{\alpha \lambda}\right)^{2/q}\right)^{1/(1-2/q)} + \eta_{t-2}^2 \frac{L\delta^2}{2}\right\}
\end{align}
for all $t\in\{1, \ldots, T\}$, 
and letting ${\fwt} = \lceil 8 L \delta^2 \rceil$, when $q \in [2, 4[$, it holds that
\begin{align}\label{eq:ext_sol}
    h_t & \leq 
    \max \left\{ \left(\frac{\eta_{t-2}}{\eta_{{\fwt}-1}}\right)^{2}h_{\fwt}, \left(\eta_{t-2} L \left( \frac{2}{\alpha \lambda}\right)^{2/q}\right)^{2} + \eta_{t-2}^2 \frac{L\delta^2}{2}\right\}
\end{align}
for all $t\in\{\fwt,\ldots, T\}$.
\end{theorem}
\begin{proof}
Let $t\in\{1,\ldots, T-1\}$. Combining \eqref{eq:scaling_unif} and \eqref{eq:scaling_ext}, it holds that
$\langle \nabla f(x_t), x_t - p_t\rangle \geq \| x_t - p_t \|_2^2 \left(\frac{\alpha\lambda}{2}\right)^{2/q} h_t^{1-2/q}$.
Then, using \eqref{eq:start_progress_bound}, we obtain
$h_{t+1}  \leq   h_t - \eta_t\|x_t-p_t\|_2^2( \frac{\alpha \lambda}{2})^{2/q} h_t^{1-2/q} +\eta_t^2\frac{L\|x_t-p_t\|_2^2}{2}$.
Combined with \eqref{eq:always_combine_with_this}, we obtain
\begin{align}\label{eq:ext}
   h_{t+1}  \leq &  \left(1-\frac{\eta_t}{2}\right)h_t +\frac{\eta_t\|x_t-p_t\|_2^2}{2}  \left( \eta_t L-\left( \frac{\alpha \lambda}{2}\right)^{2/q} h_t^{1-2/q} \right).
\end{align}
Suppose that $q \geq  4$. Then, \eqref{eq:ext} allows us to apply Lemma~\ref{lemma:sequences} with $A =( \frac{\alpha \lambda}{2})^{2/q}$, $B=L$, $C= \frac{\delta^2}{2}$, $C_t = \frac{\|x_t-p_t\|_2^2}{2}$ for all $t\in\{1,\ldots, T-1\}$, and $\psi = 2/q\in [0,1/2]$, resulting in \eqref{eq:ext_q_greater_4} holding for all $t\in\{1,\ldots, T\}$, since
$h_1 \leq \frac{L\delta^2}{2}$, and $\eta_{-1}\geq \eta_{0} = 1$.
Next, suppose that $q\in[2, 4[$ and note that $2/q > 1/2$. Thus, Lemma~\ref{lemma:sequences} can be applied after a burn-in phase of slower convergence. Let $t\in\{\fwt,\ldots,T-1\}$.
By Proposition~\ref{prop:generalization_jaggi}, $ h_t \leq h_\fwt \leq 1 $.
Since $1-2/q\leq 1/2$, we have $h_t^{1-2/q} \geq h_t^{1/2} = h_t^{1-1/2}$. Combined with \eqref{eq:ext}, it holds that
$h_{t+1}  \leq   (1-\frac{\eta_t}{2})h_t +\frac{\eta_t\|x_t-p_t\|_2^2}{2}  ( \eta_t L-( \frac{\alpha \lambda}{2})^{2/q} h_t^{1 - 1/2} )$.
We then apply 
Lemma~\ref{lemma:sequences} with $A =( \frac{\alpha \lambda}{2})^{2/q}$, $B=L$, $C= \frac{\delta^2}{2}$, $C_t = \frac{\|x_t-p_t\|_2^2}{2}$ for all $t\in\{\fwt,\ldots,T-1\}$, and $\psi = 1/2$, resulting in \eqref{eq:ext_sol} holding
for all $t\in\{\fwt,\ldots, T\}$. Note that the lemma holds even if $\fwt = 1$ since $\eta_{-1}\geq \eta_0 = 1$.
\end{proof}

As we discuss below, in the setting of Theorem~\ref{thm:exterior}, when $q=2$, FW with open-loop step-sizes $\eta_t= \frac{\ell}{t+\ell}$, where $\ell\in \N_{\geq 2}$, converges at a rate of order $\cO(1/t^{\ell/2})$.

\begin{remark}[Acceleration beyond rates of order $\cO(1/t^2)$]\label{rem:ol_linear}
Under the assumptions of Theorem~\ref{thm:exterior}, analogously to Proposition~\ref{prop:generalization_jaggi}, one can prove convergence rates of order $\cO(1/t)$ for FW with step-sizes $\eta_t = \frac{\ell}{t+\ell}$, where $\ell\in \N_{\geq 2}$, depending on $L, \delta$, and $\ell$. Thus, for $q=2$, there exists $\fwt \in \N$ depending only on $L, \alpha, \delta, \lambda,\ell$, such that for all $t\in\{\fwt,\ldots, T-1\}$, it holds that 
\begin{align*}
        \frac{\eta_t\|x_t-p_t\|_2^2}{2}   ( \eta_t L-\frac{\alpha \lambda}{2} ) \leq 0. 
\end{align*}
Thus, \eqref{eq:ext} becomes $h_{t+1}\leq (1-\frac{\eta_t}{2})h_t$ for all $t\in\{\fwt, \ldots, T-1\}$.
Then, by induction, for even $\ell\in\N_{\geq 2}$, it holds that
$h_t \leq \frac{h_{\fwt} ({\fwt}+\ell/2) ({\fwt}+\ell/2 +1) \cdots ({\fwt}+\ell-1)}{(t+\ell/2) (t+\ell/2 +1) \cdots (t+\ell-1)}$
for all $t\in\{\fwt,\ldots, T-1\}$, resulting in a convergence rate of order $\cO(1/t^{\ell/2})$. For $\ell\in\N_{\geq 6}$, this convergence rate is better than the convergence rates of order $\cO(1/t^2)$ known for PAFW and MFW.
Using similar arguments, one can prove that FW with the constant open-loop step-size 
$\eta_t = \frac{\alpha\lambda}{2L}$
converges linearly, that is, $h_t \leq (1 - \frac{\alpha\lambda}{4L})^th_0$ for all $t\in\{0, \ldots, T\}$.
\end{remark}

\begin{figure}[t]
\captionsetup[subfigure]{justification=centering}
\begin{tabular}{c c c}
    \begin{subfigure}{.3\textwidth}
    \centering
        \includegraphics[width=1\textwidth]{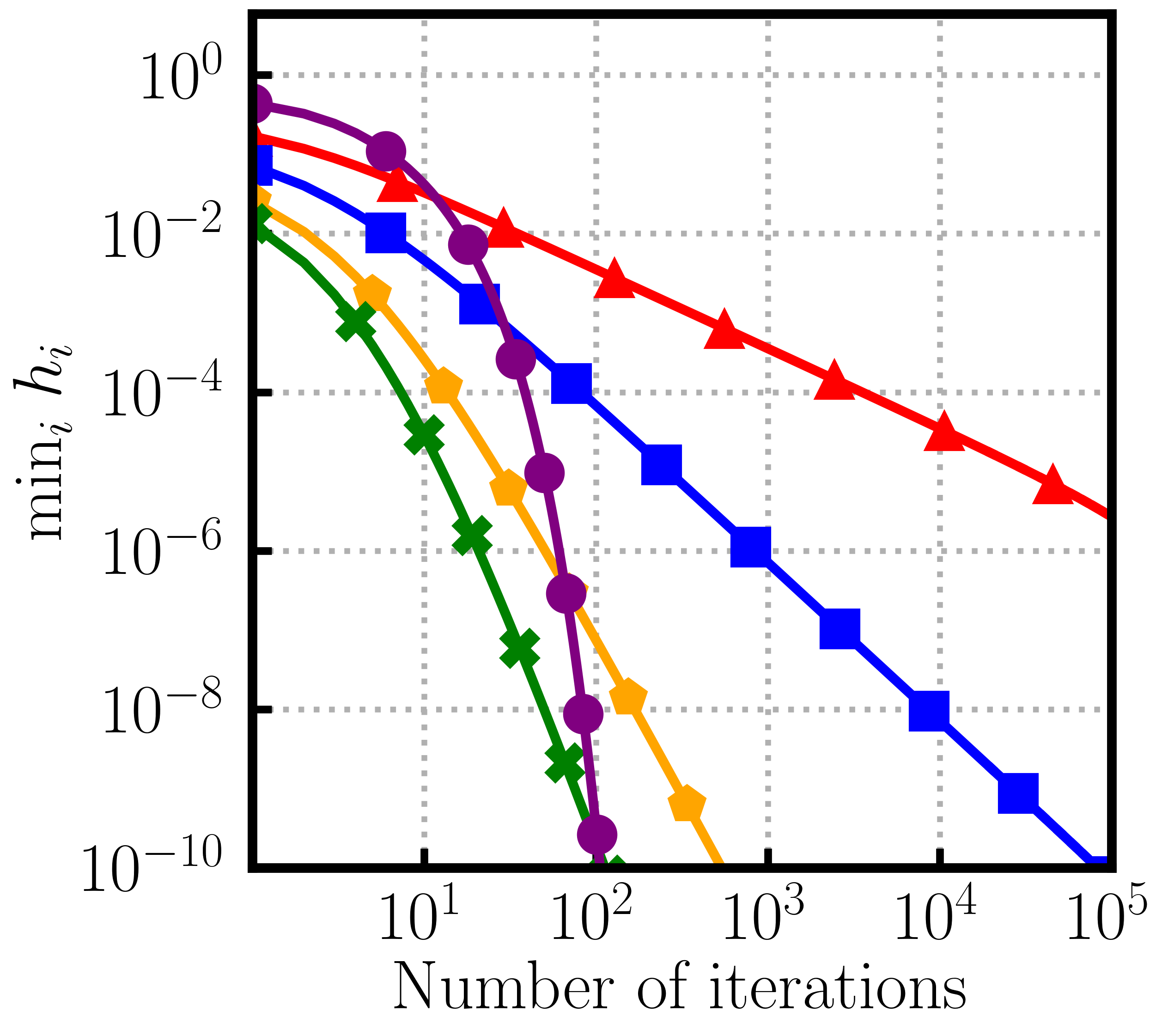}
        \caption{$\ell_2$-ball.}\label{fig:exterior_2}
    \end{subfigure}& 
    \begin{subfigure}{.3\textwidth}
    \centering
        \includegraphics[width=1\textwidth]{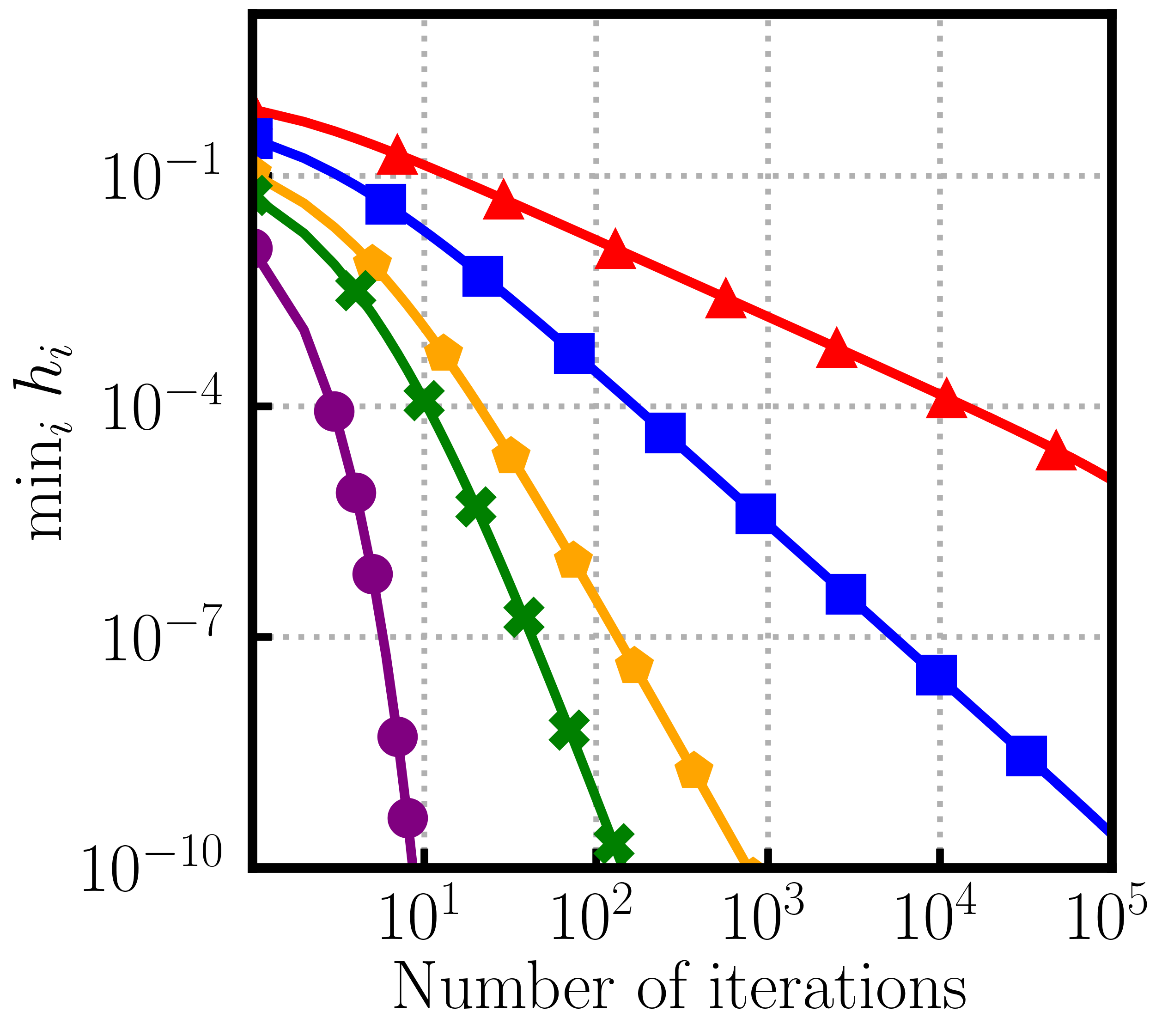}
        \caption{$\ell_3$-ball.}\label{fig:exterior_3}
    \end{subfigure} & 
    \begin{subfigure}{.3\textwidth}
    \centering
        \includegraphics[width=1\textwidth]{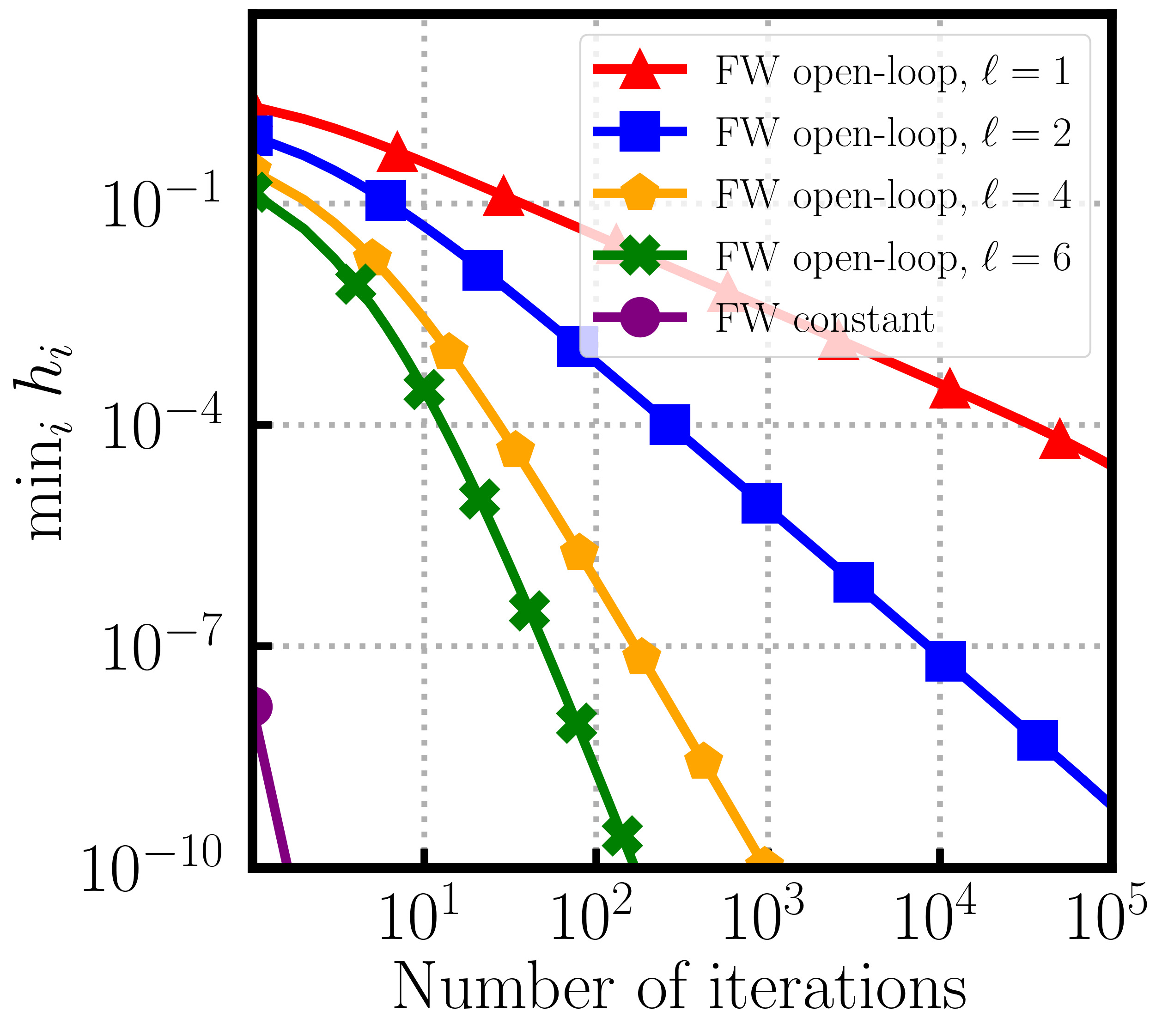}
        \caption{$\ell_5$-ball.}\label{fig:exterior_5}
    \end{subfigure}\\
\end{tabular}
\caption{
Comparison of FW with different step-sizes when the feasible region $\cC\subseteq\R^{100}$ is an $\ell_p$-ball, the objective $f$ is not strongly convex, and the unconstrained optimal solution $\argmin_{x\in\R^d}f(x)$ lies in the exterior of $\cC$, implying that $\|\nabla f(x)\|_2 \geq \lambda > 0$ for all $x\in\cC$ for some $\lambda > 0$. The $y$-axis represents the minimum primal gap.
FW with open-loop step-sizes $\eta_t = \frac{\ell}{t+\ell}$, where $\ell\in\N_{\geq 1}$, converges at a rate of order $\cO(1/t^\ell)$ and FW with constant step-size converges linearly.
}\label{fig:exterior}
\end{figure}

The results in Figure~\ref{fig:exterior}, see Section~\ref{sec:experiment_exterior} for details, show that in the setting of Theorem~\ref{thm:exterior} and Remark~\ref{rem:ol_linear}, FW with open-loop step-sizes $\eta_t=\frac{\ell}{t+\ell}$, where $\ell\in\N_{\geq 1}$, converges at a rate of order $\cO(1/t^\ell)$ and FW with constant step-size $\eta_t = \frac{\alpha\lambda}{2L}$ converges linearly in Figure~\ref{fig:exterior_2}. The convergence rates for FW with $\eta_t=\frac{\ell}{t+\ell}$ are better than predicted by Remark~\ref{rem:ol_linear} and indicate a gap between theory and practice. Note that we observe acceleration beyond $\cO(1/t^2)$ even when the feasible region is only uniformly convex, a behaviour which our current theory does not explain.

\subsection{{No assumptions on the location of the optimal solution}}\label{sec:unconstrained}
In this section, we address the setting when the feasible region $\cC$ is uniformly convex, the objective function $f$ satisfies \eqref{eq:heb}, and no assumptions are made on the location of the optimal solution $x^*\in\argmin_{x\in\cC}f(x)$.

\citet{garber2015faster} showed that strong convexity of the feasible region and the objective function are enough to modify \eqref{eq:start_progress_bound} to prove a convergence rate of order $\cO(1/t^2)$ for FW with line-search or short-step. 
\citet{kerdreux2021projection} relaxed these assumptions and proved convergence rates for FW with line-search or short-step interpolating between $\cO(1/t)$ and $\cO(1/t^2)$.
Below, for the same setting, we prove that FW with open-loop step-sizes also admits rates interpolating between $\cO(1/t)$ and $\cO(1/t^2)$.

\begin{theorem}[No assumptions on the location of the optimal solution]\label{thm:unrestricted}
For $\alpha>0$ and $q\geq 2$, let $\cC \subseteq \R^d$ be a compact $(\alpha,q)$-uniformly convex set of diameter $\delta > 0$, let $f\colon \cC \to \R$ be a convex and $L$-smooth function satisfying a $(\mu, \theta)$-\eqref{eq:heb} for some $\mu > 0 $ and $\theta \in [0, 1/2]$ with unique minimizer $x^*\in\argmin_{x\in\cC}f(x)$. 
Let $T\in\N$ and $\eta_t = \frac{4}{t+4}$ for all $t\in\Z$.
Then, for the iterates of Algorithm~\ref{algo:fw} with step-size $\eta_t$, it holds that
\begin{align}\label{eq:unrestricted_rate}
    h_t & \leq \max \left\{ \eta_{t-2}^{1/(1-2\theta/q)}\frac{L\delta^2}{2}, \left(\eta_{t-2} L \left( \frac{2\mu}{\alpha}\right)^{2/q}\right)^{1/(1-2\theta/q)} + \eta_{t-2}^2 \frac{L\delta^2}{2}\right\}
\end{align}
for all $t \in\{1, \ldots, T\}$.
\end{theorem}
\begin{proof}
Let $t\in\{1,\ldots, T-1\}$. Combining \eqref{eq:scaling_unif} and \eqref{eq:scaling_heb}, we obtain
$\langle \nabla f(x_t) ,x_t-p_t \rangle \geq \|x_t-p_t\|_2^2 (\frac{\alpha}{2\mu})^{2/q} h_t^{1-2\theta/q}$.
Then, using \eqref{eq:start_progress_bound}, we obtain
$h_{t+1} \leq  h_t - \eta_t \|x_t-p_t\|_2^2 (\frac{\alpha}{2\mu})^{2/q} h_t^{1-2\theta/q} + \eta_t^2\frac{L\|x_t - p_t\|_2^2}{2}$.
Combined with \eqref{eq:always_combine_with_this}, we have
$h_{t+1} \leq  (1-\frac{\eta_t}{2})h_t + \frac{\eta_t\|x_t-p_t\|_2^2}{2} (\eta_t L - (\frac{\alpha}{2\mu})^{2/q} h_t^{1-2\theta/q})$.
We apply Lemma~\ref{lemma:sequences} with $A =( \frac{\alpha}{2\mu})^{2/q}$, $B=L$, $C= \frac{\delta^2}{2}$, $C_t = \frac{\|x_t-p_t\|_2^2}{2}$ for all $t\in\{\fwt, \ldots, T-1\}$, and $\psi = 2\theta/q \leq 1/2$, resulting in \eqref{eq:unrestricted_rate} holding
for all $t\in\{\fwt,\ldots, T\}$, since $h_1 \leq \frac{L\delta^2}{2}$, and $\eta_{-1}\geq \eta_0 = 1$.
\end{proof}
\section{Optimal solution in the relative interior of a face of $\cC$}\label{sec:ol_faster_than_ls_ss}
In this section, we consider the setting when the feasible region is a polytope, the objective function is strongly convex, and the optimal solution lies in the relative interior of an at least one-dimensional face $\cC^*$ of $\cC$. Then, under mild assumptions, FW with line-search or short-step converges at a rate of order $\Omega(1/t^{1+\eps})$ for any $\eps  > 0$ \citep{wolfe1970convergence}. Due to this lower bound, several FW variants with line-search or short-step were developed that converge linearly in the described setting, see Section~\ref{sec:related_work}

For this setting, following our earlier blueprint from Section~\ref{sec:blueprint}, we prove that FW with open-loop step-sizes converges at a rate of order $\cO(1/t^2)$, which is non-asymptotically faster than FW with line-search or short-step. Our result can be thought of as the non-asymptotic version of Proposition~2.2 in \citet{bach2021effectiveness}.
Contrary to the result of \citet{bach2012equivalence}, our result is in primal gap, we do not require bounds on the third-order derivatives of the objective, and we do not invoke affine invariance of FW to obtain acceleration.
To prove our result, we require two assumptions. The first assumption stems from \emph{active set identification}, that is, the concept of identifying the face $\cC^*\subseteq\cC$ containing the optimal solution $x^*\in\argmin_{x\in\cC}f(x)$ to then apply faster methods whose convergence rates then often only depend on the dimension of the optimal face \citep{hager2006new, bomze2019first, bomze2020active}.
Here, it is possible to determine the number of iterations necessary for FW with open-loop step-sizes to identify the optimal face when the following regularity assumption, already used in, for example, \citet{garber2020revisiting, li2021momentum}, is satisfied.
\begin{assumption}[Strict complementarity]\label{ass:strict_comp}
Let $\cC \subseteq \R^d$ be a polytope and let $f\colon \cC \to \R$ be differentiable in an open set containing $\cC$. Suppose that $x^*\in\argmin_{x\in\cC}f(x)$ is unique and contained in an at least one-dimensional
face $\cC^*$ of $\cC$ and that there exists $\kappa > 0$ such that if $p\in \vertices \left(\cC\right)\setminus \cC^*$, then $\langle \nabla f(x^*), p-x^*\rangle \geq \kappa$; otherwise, if $p\in \vertices \left(\cC^*\right)$, then $\langle \nabla f(x^*), p-x^*\rangle = 0$.
\end{assumption}

In the proof of Theorem~$5$ in \citet{garber2020revisiting}, the authors showed that there exists an iterate $\fwt \in \N$ such that for all $t\geq \fwt$, the FW vertices $p_t$ lie in the optimal face, assuming that the objective function is strongly convex. Below, we generalize their result to convex functions satisfying \eqref{eq:heb}.

\begin{lemma}[Active set identification]\label{lemma:active_face_identification}
Let $\cC \subseteq \R^d$ be a polytope of diameter $\delta > 0$, let $f\colon \cC \to \R$ be a convex and $L$-smooth function satisfying a $(\mu, \theta)$-\eqref{eq:heb} for some $\mu > 0 $ and $\theta \in ]0, 1/2]$ with unique minimizer $x^*\in\argmin_{x\in\cC}f(x)$, and suppose that there exists $\kappa > 0$ such that Assumption~\ref{ass:strict_comp} is satisfied.
Let
$\fwt =\lceil  8 L\delta^2 \left({2\mu L \delta}/{\kappa}\right)^{1/\theta}\rceil$,
$T\in\N$, and $\eta_t = \frac{4}{t+4}$ for all $t\in\Z$.
Then, for the iterates of Algorithm~\ref{algo:fw} with step-size $\eta_t$, it holds that $p_t\in \vertices \left(\cC^*\right)$ for all $t\in\{\fwt, \ldots, T-1\}$.
\end{lemma}
\begin{proof}
Let $t\in\{\fwt, \ldots, T-1\}$.
Note that in Line~\ref{line:p_t_det} of Algorithm~\ref{algo:fw}, $p_t \in \argmin_{p\in \cC} \langle \nabla f(x_t), p - x_t\rangle $ can always be chosen such that $p_t \in \argmin_{p \in \vertices(\cC)} \langle \nabla f(x_t), p - x_t\rangle$. For $p \in \vertices (\cC)$, it holds that
\begin{align}\label{eq:any_vertex}
    \langle\nabla f(x_t), p - x_t\rangle & = \langle\nabla f(x_t) -\nabla f(x^*) + \nabla f(x^*), p - x^* + x^* - x_t\rangle \nonumber \\
    & = \langle\nabla f(x_t) -\nabla f(x^*), p - x_t\rangle + \langle \nabla f(x^*), p - x^* \rangle + \langle \nabla f(x^*), x^* - x_t\rangle.
\end{align}
We distinguish between vertices $p\in \vertices \left(\cC\right) \setminus \cC^*$ and vertices $p\in \vertices \left(\cC^*\right)$. First, suppose that $p\in \vertices \left(\cC\right) \setminus \cC^*$. Using strict complementarity, Cauchy-Schwarz, $L$-smoothness, and \eqref{eq:heb} to bound \eqref{eq:any_vertex} yields
\begin{align*}
    \langle\nabla f(x_t), p - x_t\rangle & \geq - \|\nabla f(x_t) - \nabla f(x^*)\|_2 \|p - x_t\|_2 + \kappa + \langle \nabla f(x^*), x^* - x_t\rangle\nonumber \\
    &\geq \kappa -L\delta \|x_t-x^*\|_2 + \langle \nabla f(x^*), x^* - x_t\rangle \nonumber \\
    & \geq \kappa - \mu L\delta h_t^\theta + \langle \nabla f(x^*), x^* - x_t\rangle.
\end{align*}
Next, suppose that $p\in \vertices \left(\cC^*\right)$. Using strict complementarity, Cauchy-Schwarz, $L$-smoothness, and \eqref{eq:heb} to bound \eqref{eq:any_vertex} yields
\begin{align*}
    \langle\nabla f(x_t), p - x_t\rangle & \leq \|\nabla f(x_t) - \nabla f(x^*)\|_2 \|p - x_t\|_2 + \langle \nabla f(x^*), x^* - x_t\rangle \\
    &\leq L \delta \|x_t-x^*\|_2 + \langle \nabla f(x^*), x^* - x_t\rangle \\
    & \leq \mu L\delta h_t^\theta + \langle \nabla f(x^*), x^* - x_t\rangle.
\end{align*}
By Proposition~\ref{prop:generalization_jaggi}, 
$\mu L \delta h_t^\theta  \leq \mu L \delta h_\fwt^\theta  \leq \mu L \delta \left(\frac{8L\delta^2}{8 L \delta^2\left({2\mu L \delta}/{\kappa}\right)^{1/\theta} +3 }\right)^\theta  < \frac{\kappa}{2}$.
Hence, for $t\in\{\fwt, \ldots, T-1\}$,
\begin{equation*}
    \langle \nabla f(x_t), p-x_t \rangle =  \begin{cases}
         > \frac{\kappa}{2} + \langle \nabla f(x^*), x^* - x_t\rangle, & \text{if} \ p \in \vertices \left(\cC\right) \setminus \cC^* \\
         < \frac{\kappa }{2} + \langle \nabla f(x^*), x^* - x_t\rangle, & \text{if} \ p \in \vertices \left(\cC^*\right).
\end{cases}
\end{equation*}
Then, by optimality of $p_t$, for all iterations $t\in\{\fwt, \ldots, T-1\}$ of Algorithm~\ref{algo:fw}, it holds that $p_t \in \vertices \left(\cC^*\right)$.
\end{proof}

In addition, we assume the optimal solution $x^*\in\argmin_{x\in\cC}f(x)$ to be in the relative interior of an at least one-dimensional face $\cC^*$ of  $\cC$.

\begin{assumption}[Optimal solution in the relative interior of a face of $\cC$]\label{ass:opt_in_face}
Let $\cC \subseteq \R^d$ be a polytope and let $f\colon \cC \to \R$. Suppose that $x^*\in\argmin_{x\in\cC}f(x)$ is unique and contained in the relative interior of an at least one-dimensional face $\cC^*$ of $\cC$, that is, there exists $\beta > 0 $ such that $\emptyset \neq B_\beta (x^*) \cap \aff(\cC^*) \subseteq \cC$.
\end{assumption}

Using Assumption~\ref{ass:opt_in_face}, \citet{bach2021effectiveness} derived the following scaling inequality, a variation of \eqref{eq:scaling_int}.

\begin{lemma}[\citealp{bach2021effectiveness}]\label{lemma:scaling_bach}
    Let $\cC \subseteq \R^d$ be a polytope, let $f\colon \cC \to \R$ be a convex and $L$-smooth function with unique minimizer $x^*\in\argmin_{x\in\cC}f(x)$, and suppose that there exists $\beta > 0$ such that Assumption~\ref{ass:opt_in_face} is satisfied. 
    Then, for all $x\in \cC$ such that $p \in \argmin_{v\in \cC} \langle \nabla f(x), v \rangle \subseteq \cC^* $, it holds that 
    \begin{align}\tag{Scaling-BOR}\label{eq:scaling_bor}
        \langle \nabla f(x), x - p \rangle & \geq \beta \|\Pi \nabla f(x)\|_2,
    \end{align}
    where $\Pi x$ denotes the orthogonal projection of $x\in\R^d$ onto the span of $\{x^* - p \mid p \in \cC^* \}$.
    \end{lemma}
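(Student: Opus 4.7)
The plan is to build an explicit feasible comparison point $y\in\cC$ that exposes $\beta\|\Pi\nabla f(x)\|_2$ worth of linear decrease, and then apply the optimality of $p$ against $y$. Let $g := \Pi\nabla f(x)$ and set $y := x^* - \beta\, g/\|g\|_2$ (the degenerate case $g = 0$ is immediate: by optimality of $p$ over $\cC$ and the first-order convexity inequality, $\langle \nabla f(x), x - p\rangle \geq \langle \nabla f(x), x - x^*\rangle \geq f(x) - f(x^*) \geq 0 = \beta\|g\|_2$).

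The central geometric step to verify is that the subspace $\mathrm{span}\{x^* - v \mid v \in \vertices(\cC^*)\}$ onto which $\Pi$ projects is exactly the linear subspace parallel to $\aff(\cC^*)$; this holds because $x^*$ lies in the relative interior of $\cC^*$ and $\aff(\cC^*) = \aff(\vertices(\cC^*))$. Consequently $y - x^*$ lies in this parallel subspace and satisfies $\|y - x^*\|_2 = \beta$, so the hypothesis $B_\beta(x^*) \cap \aff(\cC^*) \subseteq \cC$ places $y$ in $\cC$.

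Then I invoke optimality of $p$ at the test point $y$, namely $\langle \nabla f(x), p\rangle \leq \langle \nabla f(x), y\rangle$. Unfolding the definition of $y$ and using that $\Pi$ is self-adjoint and idempotent (so $\langle \nabla f(x), g\rangle = \|g\|_2^2$), this rearranges into
\[
\langle \nabla f(x), x - p\rangle \;\geq\; \langle \nabla f(x), x - x^*\rangle + \beta\, \|\Pi\nabla f(x)\|_2.
\]
A final application of the convexity bound $\langle \nabla f(x), x - x^*\rangle \geq f(x) - f(x^*) \geq 0$ drops the extra term and yields \eqref{eq:scaling_bor}.

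The only real obstacle is the geometric identification of $\Pi$ with the orthogonal projection onto the linear space parallel to $\aff(\cC^*)$, which is precisely what makes the test point $y$ feasible; once this is in hand, the remainder is a one-line LMO comparison plus convexity. Notably, neither the $L$-smoothness of $f$ nor the hypothesis $p \in \cC^*$ is used in the argument itself—the latter is the structural condition from the algorithmic regime (e.g.~Lemma~\ref{lemma:active_face_identification}) under which one intends to invoke this scaling inequality.
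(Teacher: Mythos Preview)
Your proof is correct and is essentially the same as the paper's: both construct the test point $y = x^* - \beta\,\Pi\nabla f(x)/\|\Pi\nabla f(x)\|_2 \in B_\beta(x^*)\cap\aff(\cC^*)\subseteq\cC$, compare $p$ against $y$, and finish with convexity. Your added observation that the hypothesis $p\in\cC^*$ is not actually needed is accurate---the paper uses it to restrict the maximum to $\cC^*$, but since $y\in\cC$ anyway, optimality of $p$ over all of $\cC$ suffices.
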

\begin{proof}
Suppose that $x \in \cC$ such that $p \in \argmin_{v\in \cC} \langle \nabla f(x), v \rangle \subseteq \cC^*$. Then,
\begin{align*}
    \langle \nabla f(x), x - p \rangle  & = \max_{v\in \cC^* } \langle \nabla f(x), x - v \rangle \\
    &\geq \langle\nabla f(x), x - x^* \rangle + \langle \nabla f(x), \beta \frac{\Pi \nabla f(x) }{\|\Pi \nabla f(x) \|_2} \rangle \\
    & = \langle\nabla f(x), x - x^* \rangle + \langle \Pi \nabla f(x) + (\Iota - \Pi) \nabla f(x), \beta \frac{\Pi \nabla f(x) }{\|\Pi \nabla f(x) \|_2}\rangle \\
    & = \langle\nabla f(x), x - x^* \rangle + \beta \|\Pi \nabla f(x)\|_2\\
    & \geq \beta \|\Pi \nabla f(x)\|_2,
\end{align*}
where the first equality follows from the construction of $p \in \argmin_{v\in \cC} \langle \nabla f(x), v \rangle$, the first inequality follows from the fact that the maximum is at least as large as the maximum attained on $B_\beta (x^*) \cap \cC^*$, the second equality follows from the definition of the orthogonal projection, the third equality follows from the fact that $\Pi x$ and $(\Iota - \Pi) x$ are orthogonal for any $x\in \R^d$, and the second inequality follows from the convexity of $f$.
\end{proof}

To derive the final scaling inequality, we next bound the distance between $x_t$ and the optimal face $\cC^*$. 

\begin{lemma}[Distance to optimal face]\label{lemma:distance_to_optimal_face}
Let $\cC \subseteq \R^d$ be a polytope of diameter $\delta > 0$, let $f\colon \cC \to \R$ be a convex and $L$-smooth function satisfying a $(\mu, \theta)$-\eqref{eq:heb} for some $\mu > 0 $ and $\theta \in ]0, 1/2]$ with unique minimizer $x^*\in\argmin_{x\in\cC}f(x)$, and suppose that there exist $\beta, \kappa > 0$ such that Assumptions~\ref{ass:strict_comp} and~\ref{ass:opt_in_face} are satisfied.
Let $\fwt =  \max\{ \lceil 8L \delta^2\left({\mu}/{\beta}\right)^{1/\theta} \rceil, 
\lceil  8 L\delta^2 \left({2\mu L \delta}/{\kappa}\right)^{1/\theta} \rceil \}$,
$T\in\N$, and $\eta_t = \frac{4}{t+4}$ for all $t\in\Z$.
 Then, for the iterates of Algorithm~\ref{algo:fw} with step-size $\eta_t$, it holds that
\begin{align}\label{eq:statement_1}
    \|(I-\Pi) (x_t - x^*)\|_2 & \leq \frac{\eta_t^4}{\eta_{\fwt}^4} \beta
\end{align}
for all $t\in\{\fwt, \ldots, T-1\}$,
where $\Pi x$ denotes the orthogonal projection of $x\in\R^d$ onto the span of $\{x^* - p \mid p \in \cC^* \}$.
\end{lemma}
\begin{proof}
Let $t\in\{\fwt, \ldots, T-1\}$.
By Lemma~\ref{lemma:active_face_identification}, $p_t \in \vertices (\cC^*)$. Thus, $(\Iota -\Pi) (p_t -x^*) = \zeroterm$,
\begin{align*}
    (\Iota - \Pi) (x_{t+1} - x^*) & = (1- \eta_t) (\Iota - \Pi) (x_t - x^*) + \eta_t (\Iota - \Pi) (p_t -x^*) \\
    & = (1- \eta_t) (\Iota - \Pi) (x_t - x^*)\\
    & = \prod_{i = \fwt}^t (1-\eta_i) (\Iota - \Pi) (x_\fwt - x^*) \\
    & = \frac{\fwt (\fwt+1)(\fwt+2) (\fwt+3)}{(t+1)(t+2)(t+3)(t+4)} (\Iota - \Pi) (x_\fwt - x^*),
\end{align*}
and
$\|(I-\Pi) (x_{t+1} - x^*)\|_2 \leq \frac{\eta_{t+1}^4}{\eta_\fwt^4} \|(I-\Pi) (x_\fwt - x^*)\|_2  \leq \frac{\eta_{t+1}^4}{\eta_\fwt^4} \beta$,
where the last inequality follows from Lemma~\ref{lemma:dist_to_opt}.
\end{proof}

We derive the second scaling inequality below.

\begin{lemma}\label{lemma:scaling_boundary}
Let $\cC \subseteq \R^d$ be a polytope of diameter $\delta > 0$, let $f\colon \cC \to \R$ be an $\alpha_f$-strongly convex and $L$-smooth function with unique minimizer $x^*\in\argmin_{x\in\cC}f(x)$, and suppose that there exist $\beta, \kappa > 0$ such that Assumptions~\ref{ass:strict_comp} and~\ref{ass:opt_in_face} are satisfied.
Let $M = \max_{x\in \cC}\|\nabla f(x)\|_2$, $\fwt = \max\{ \lceil {16L \delta^2}/{\alpha_f\beta^2}\rceil, \lceil  {64 L^3\delta^4}/{\alpha_f\kappa^2} \rceil \}$,
$T\in\N$, and $\eta_t = \frac{4}{t+4}$ for all $t\in\Z$.
Then, for the iterates of Algorithm~\ref{algo:fw} with step-size $\eta_t$ and $t\in\{\fwt, \ldots, T-1\}$, it holds that $h_t \leq \frac{\eta_t^4}{\eta_{\fwt}^4} \beta M$ or
\begin{align}\label{eq:scaling_cvx}\tag{Scaling-CVX}
    \|\Pi \nabla f(x_t)\|_2 \geq \sqrt{\frac{\alpha_f}{2}} \sqrt{h_t} -  \frac{\eta_t^2}{\eta_{\fwt}^2}\sqrt{\frac{\alpha_f\beta M}{2}}-\frac{\eta_t^4}{\eta_\fwt^4}L \beta,
\end{align}
where $\Pi x$ denotes the orthogonal projection of $x\in\R^d$ onto the span of $\{x^* - p \mid p \in \cC^* \}$.
\end{lemma}
\begin{proof}
Given a vector $x\in\R^d$, let $\Pi_{\aff(\cC^*)}x$ denote the projection of $x$ onto $\aff(\cC^*)$, that is, $\Pi_{\aff(\cC^*)}x\in \argmin_{y\in\aff(\cC^*)}\|y-x\|_2$.
We first demonstrate how to express $\Pi_{\aff(\cC^*)}$ using $\Pi$.
Since
$\aff(\cC^*) = x^* + \mathspan(\{x^*-p \mid p\in\cC^*\})$, there has to exist some $y\in\R^d$ such that 
$\Pi_{\aff(\cC^*)}x = (I-\Pi)x^*  + \Pi x + \Pi y$.
By orthogonality of $\Pi$, we have
$\|\Pi_{\aff(\cC^*)}x - x\|_2 = \|(I-\Pi)x^*- (I-\Pi) x+\Pi y \|_2 = \|(I-\Pi)x^*- (I-\Pi) x \|_2 +\|\Pi y\|_2$.
The right-hand side is minimized when $\Pi y = \zeroterm$. Thus,
$\Pi_{\aff(\cC^*)}x = (I-\Pi)x^* + \Pi x \in  \argmin_{y\in\aff(\cC^*)}\|y-x\|_2$.
Let $t\in\{\fwt, \ldots, T-1\}$.
By Lemma~\ref{lemma:dist_to_opt}, $\|x_t -x^*\|_2 \leq \beta$ and, thus, by Assumption~\ref{ass:opt_in_face}, $\Pi_{\aff(\cC^*)}x_t\in \cC^*$.
By $L$-smoothness of $f$, it holds that
$\|\nabla f(x_t) - \nabla f(\Pi_{\aff(\cC^*)}x_t)\|_2 \leq L \|x_t - \Pi_{\aff(\cC^*)}x_t\|_2 = L\|(I-\Pi) (x_t-x^*)\|_2$.
By Lemma~\ref{lemma:distance_to_optimal_face}, it then holds that
\begin{align}\label{eq:pre_proj_grad}
    \|\nabla f(x_t) - \nabla f(\Pi_{\aff(\cC^*)}x_t)\|_2 \leq \frac{\eta_t^4}{\eta_{\fwt}^4}L\beta.
\end{align}
Since for any $x\in \R^d$, we have that 
$\|\Pi x\|_2 \leq \|\Pi x\|_2 + \|(I-\Pi) x\|_2 = \|x\|_2$,
Inequality \eqref{eq:pre_proj_grad} implies that $\|\Pi \nabla f(x_t) - \Pi \nabla f(\Pi_{\aff(\cC^*)}x_t)\|_2 \leq \frac{\eta_t^4}{\eta_{\fwt}^4}L\beta$.
Combined with the triangle inequality,
$\|\Pi \nabla f(\Pi_{\aff(\cC^*)}x_t)\|_2  \leq \|\Pi \nabla f(x_t)\|_2 + \|\Pi \nabla f(x_t) - \Pi \nabla f(\Pi_{\aff(\cC^*)}x_t)\|_2 \leq  \|\Pi \nabla f(x_t)\|_2 + \frac{\eta_t^4}{\eta_{\fwt}^4}L\beta$,
which we rearrange to
\begin{align}\label{eq:proj_grad_bound}
    \|\Pi \nabla f(\Pi_{\aff(\cC^*)}x_t)\|_2 - \frac{\eta_t^4}{\eta_{\fwt}^4}L\beta\leq \|\Pi \nabla f(x_t)\|_2.
\end{align}
For the remainder of the proof, we bound $\|\Pi \nabla f(\Pi_{\aff(\cC^*)}x_t)\|_2$ from below. 
To do so, define the function $g\colon \cC\cap B_\beta(x^*) \to \R$ via
$g(x) := f(\Pi_{\aff(\cC^*)}x) = f((I-\Pi)x^* + \Pi x)$.
The gradient of $g$ at $x\in \cC\cap B_\beta(x^*)$ is
$\nabla g(x) = \Pi \nabla f(\Pi_{\aff(\cC^*)}x)=\Pi \nabla f((I-\Pi)x^*+\Pi x)$.
Since $f$ is $\alpha_f$-strongly convex in $\cC$ and $g(x) = f(x)$ for all $x\in\aff(\cC^*) \cap B_\beta(x^*)$, $g$ is $\alpha_f$-strongly convex in $\aff(\cC^*) \cap B_\beta(x^*)$.
Since the projection onto $\aff(\cC^*)$ is idempotent,
$\Pi_{\aff(\cC^*)}x_t \in \aff(\cC^*) \cap B_\beta(x^*)$, and $g$ is $\alpha_f$-strongly convex in $\aff(\cC^*) \cap B_\beta(x^*)$, it holds that
$
\|\Pi \nabla f(\Pi_{\aff(\cC^*)}x_t)\|_2  = \|\Pi \nabla f(\Pi_{\aff(\cC^*)}^2x_t)\|_2 = \|\nabla g(\Pi_{\aff(\cC^*)}x_t)\|_2 \geq \sqrt{\frac{\alpha_f}{2}} \sqrt{g(\Pi_{\aff(\cC^*)}x_t) - g(x^*)} = \sqrt{\frac{\alpha_f}{2}} \sqrt{f(\Pi_{\aff(\cC^*)}x_t) - f(x^*)}
$.
Suppose that $h_t \geq \frac{\eta_t^4}{\eta_{\fwt}^4} \beta M$. Then, by Lemma~\ref{lemma:distance_to_optimal_face} and Cauchy-Schwarz, we obtain
$h_t - \langle \nabla f(x_t), (I-\Pi)(x_t-x^*)\rangle \geq h_t -\frac{\eta_t^4}{\eta_{\fwt}^4} \beta M \geq 0$. Combined with convexity of $f$, it holds that
\begin{align*}
    \|\Pi \nabla f(\Pi_{\aff(\cC^*)}x_t)\|_2 & \geq \sqrt{\frac{\alpha_f}{2}} \sqrt{f(x_t) + \langle \nabla f(x_t), \Pi_{\aff(\cC^*)}x_t - x_t\rangle  - f(x^*)}\\
    &=  \sqrt{\frac{\alpha_f}{2}} \sqrt{h_t - \langle \nabla f(x_t), (I-\Pi)(x_t -x^*) \rangle }\\
    &\geq \sqrt{\frac{\alpha_f}{2}} \sqrt{h_t -\frac{\eta_t^4}{\eta_{\fwt}^4} \beta M}.
\end{align*}
Since for $a, b \in \R$ with $a \geq b \geq 0$, we have $\sqrt{a -b} \geq  \sqrt{a} - \sqrt{b}$, we obtain
$\|\Pi \nabla f(\Pi_{\aff(\cC^*)}x_t)\|_2  \geq \sqrt{\frac{\alpha_f}{2}} (\sqrt{h_t} -\sqrt{\frac{\eta_t^4}{\eta_{\fwt}^4} \beta M }) =  \sqrt{\frac{\alpha_f}{2}} (\sqrt{h_t} - \frac{\eta_t^2}{\eta_{\fwt}^2}\sqrt{\beta M})$.
Combined with \eqref{eq:proj_grad_bound}, we obtain \eqref{eq:scaling_cvx}.
\end{proof}

Finally, we prove that when the feasible region $\cC$ is a polytope, the objective function $f$ is strongly convex, and the unique minimizer $x^* \in \argmin_{x\in \cC} f(x)$ lies in the relative interior of an at least one-dimensional face $\cC^*$ of $\cC$, FW with the open-loop step-size $\eta_t = \frac{4}{t+4}$ converges at a rate of order $\cO(1/t)$ for iterations $ t\leq \fwt$ and at a non-asymptotic rate of order $\cO(1/t^2)$ for iterations $t\geq \fwt$, where $\fwt$ is defined as in Lemma~\ref{lemma:scaling_boundary}.

\begin{theorem}[Optimal solution in the relative interior of a face of $\cC$]\label{thm:polytope}
Let $\cC \subseteq \R^d$ be a polytope of diameter $\delta > 0$, let $f\colon \cC \to \R$ be an $\alpha_f$-strongly convex and $L$-smooth function with unique minimizer $x^*\in\argmin_{x\in\cC}f(x)$, and suppose that there exist $\beta, \kappa > 0$ such that Assumptions~\ref{ass:strict_comp} and~\ref{ass:opt_in_face} are satisfied.
Let $M = \max_{x\in \cC} \|\nabla f(x)\|_2$, 
$\fwt = \max\left\{ \left\lceil (16L \delta^2) / (\alpha_f\beta^2) \right\rceil, \left\lceil  (64 L^3\delta^4)/(\alpha_f\kappa^2) \right\rceil \right\}$,
$T\in\N$, and $\eta_t = \frac{4}{t+4}$ for all $t\in\Z$.
Then, for the iterates of Algorithm~\ref{algo:fw} with step-size $\eta_t$, it holds that
\begin{align}\label{eq:sol_poly}
    h_t \leq \eta_{t-2}^2 \max \left\{ \frac{h_\fwt}{\eta_{\fwt-1}^2},  \frac{ B^2}{A^2} + B, \frac{D}{\eta_\fwt^2} + E\right\}
\end{align}
for all $t\in\{\fwt, \ldots, T\}$,
where
\begin{align}\label{eq:big_letters}
    A & =\frac{\sqrt{\alpha_f}\beta}{2\sqrt{2}}, \qquad B =  \frac{L \delta^2}{2}
    + \frac{\beta\sqrt{\alpha_f \beta M}}{\eta_\fwt 2\sqrt{2}} + \frac{L\beta^2}{\eta_\fwt 2}, \qquad D   = \beta M , \qquad  E =\frac{L \delta^2}{2}.
\end{align}
\end{theorem}
\begin{proof}
Let $t\in\{\fwt, \ldots, T-1\}$ and suppose that $h_t \geq \frac{\eta_t^4}{\eta_{\fwt}^4} \beta M$.
Combine \eqref{eq:always_combine_with_this} and \eqref{eq:start_progress_bound} to obtain
$h_{t+1}  \leq (1 - \frac{\eta_t}{2}) h_t - \frac{\eta_t}{2} \langle\nabla f(x_t), x_t - p_t\rangle + \eta_t^2\frac{ L \|x_t-p_t\|^2_2}{2}$.
Plugging \eqref{eq:scaling_bor} and \eqref{eq:scaling_cvx} into this inequality results in
$h_{t+1} \leq (1 - \frac{\eta_t}{2})h_t - \frac{\eta_t \beta}{2} (\sqrt{\frac{\alpha_f}{2}} \sqrt{h_t} -  \frac{\eta_t^2}{\eta_{\fwt}^2}\sqrt{\frac{\alpha_f\beta M}{2}}-\frac{\eta_t^4}{\eta_\fwt^4}L \beta) + \frac{\eta_t^2 L \delta^2}{2}$.
Since $\eta_t / \eta_\fwt \leq 1$ for all $t\in\{\fwt, \ldots, T-1\}$, it holds that
\begin{align}\label{eq:ready_for_lemma_sequences_border}
    h_{t+1} & \leq \left(1 - \frac{\eta_t}{2}\right)h_t - \eta_t \frac{\sqrt{\alpha_f}\beta}{2\sqrt{2}}\sqrt{h_t}
    + \eta_t^2 \left(\frac{L \delta^2}{2}
    + \frac{\beta\sqrt{\alpha_f \beta M}}{\eta_\fwt 2\sqrt{2}} + \frac{L\beta^2}{\eta_\fwt 2}\right).
\end{align}
Let $A, B, C$ as in \eqref{eq:big_letters}, $C_t=1$
for all $t\in\{\fwt, \ldots, T-1\}$, and $\psi = 1/2$.
Ideally, we could now apply Lemma~\ref{lemma:sequences}. However, Inequality~\eqref{eq:ready_for_lemma_sequences_border} is only guaranteed to hold in case that $h_t \geq \frac{\eta_t^4}{\eta_{\fwt}^4} \beta M$. Thus, we have to extend the proof of Lemma~\ref{lemma:sequences} for the case that $h_t \leq \frac{\eta_t^4}{\eta_{\fwt}^4} \beta M$. In case $h_t \leq \frac{\eta_t^4}{\eta_{\fwt}^4} \beta M$, \eqref{eq:always_combine_with_this} implies that
$h_{t+1}\leq (1-\eta_t)h_t + \eta_t^2 \frac{L\|x_t - p_t\|_2^2}{2} \leq h_t + \eta_t^2 \frac{L\delta^2}{2}\leq \eta_{t-1}\eta_t( \frac{\beta M}{\eta_{\fwt}^2} +  \frac{L\delta^2}{2}) = \eta_{t-1}\eta_t( \frac{D}{\eta_{\fwt}^2} +  E)$,
where $D = \beta M $ and $E=\frac{L \delta^2}{2}$.
Thus, in the proof of Lemma~\ref{lemma:sequences}, the induction assumption \eqref{eq:cd} has to be replaced by
$h_t \leq \max \left\{  \frac{\eta_{t-2}\eta_{t-1}}{\eta_{\fwt-2}\eta_{\fwt-1}}h_\fwt, \frac{\eta_{t-2}\eta_{t-1} B^2}{A^2} + \eta_{t-2 }\eta_{t-1} BC, \eta_{t-2}\eta_{t-1}( \frac{D}{\eta_{\fwt}^2} +  E)\right\}$.
Then, using the same analysis as in Lemma~\ref{lemma:sequences}, extended by the case that $h_t \leq \frac{\eta_t^4}{\eta_{\fwt}^4} \beta M$, proves that
\eqref{eq:sol_poly} holds
for all $t\in\{\fwt, \ldots, T\}$.
\end{proof}
In the following remark to Theorem~\ref{thm:polytope}, we discuss how to relax strict complementarity.

\begin{remark}[Relaxation of strict complementarity]\label{rem:relaxation_of_strict_complementarity}
In the proof of Theorem~\ref{thm:polytope}, strict complementarity is only needed to guarantee that after a specific iteration $\fwt \in \{1,\ldots, T-1\}$, for all $t\in\{\fwt,\ldots, T-1\}$, it holds that $p_t\in \vertices(\cC^*)$, that is, only vertices that lie in the optimal face get returned by FW's LMO. However, strict complementarity is only a sufficient but not necessary criterion to guarantee that only vertices in the optimal face are obtained from the LMO for iterations $t\in\{\fwt,\ldots, T-1\}$: Consider, for example, the minimization of $f(x) = \frac{1}{2} \|x  - b \|_2^2$ for $b = (0, 1/2, 1/2)^\intercal\in \R^3$ over the probability simplex $\cC=\conv\left(\{e^{(1)}, e^{(2)}, e^{(3)}\}\right)$. Note that $\cC^* = \conv\left(\{ e^{(2)}, e^{(3)}\}\right)$.
It holds that $x^* = b $ and $\nabla f(x^*) = (0, 0 , 0)^\intercal \in \R^3$. Thus, strict complementarity is violated. However, for any $x_t = (u, v , w)^\intercal\in\R^3$ with $u + v + w = 1$ and $u,v,w \geq 0$, it holds, by case distinction, that either $\langle \nabla f(x_t), e^{(1)} -x_t\rangle >  \min\{\langle \nabla f(x_t), e^{(2)}-x_t \rangle, \langle \nabla f(x_t), e^{(3)}-x_t \rangle\}$, or $x^* = x_t$. Thus, $p_t \in \cC^*$ for all $t\geq 0$ without strict complementarity being satisfied.
\end{remark}

\begin{figure}[t]
\captionsetup[subfigure]{justification=centering}
\centering
\begin{tabular}{c c}
     \begin{subfigure}{.3\textwidth}
    \centering
        \includegraphics[width=1\textwidth]{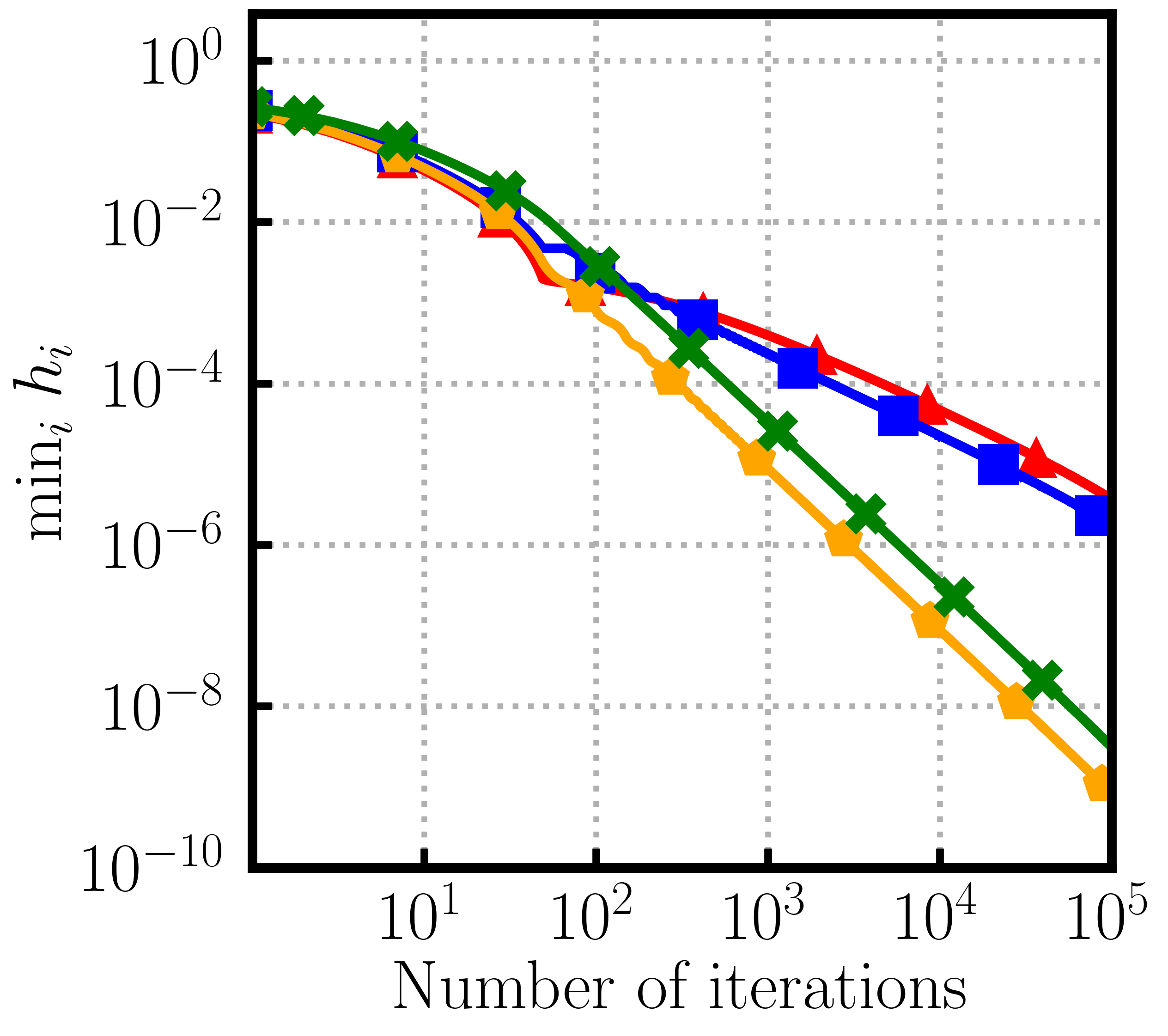}
        \caption{$\rho=\frac{1}{4}$.}
        \label{fig:ls_sublinear}
    \end{subfigure}& 
    \begin{subfigure}{.3\textwidth}
    \centering
        \includegraphics[width=1\textwidth]{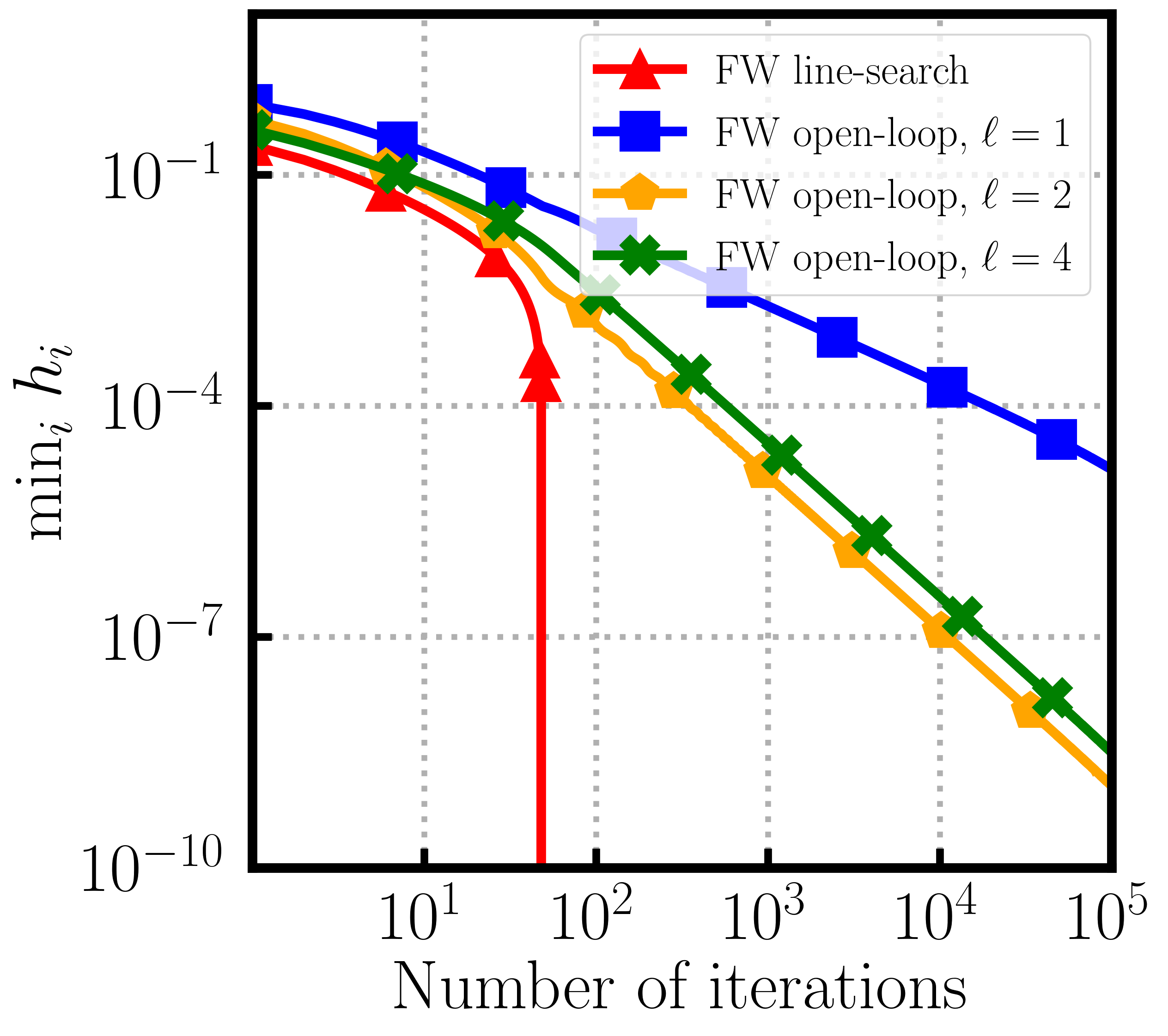}
        \caption{$\rho=2$.}
        \label{fig:ls_linear}
    \end{subfigure} 
\end{tabular}
\caption{
Comparison of FW with different step-sizes when the feasible region $\cC\subseteq\R^{100}$ is the probability simplex, the objective $f(x) = \frac{1}{2}\|x-\rho \bar{\oneterm}\|_2^2$, where $\rho \in\{ \frac{1}{4}, 2\}$, is strongly convex, and the optimal solution $x^*\in\argmin_{x\in\cC}f(x)$ lies in the relative interior of an at least one-dimensional face of $\cC$. 
The $y$-axis represents the minimum primal gap. 
For both settings, FW with open-loop step-sizes $\eta_t = \frac{\ell}{t+\ell}$ converges at a rate of order $\cO(1/t^2)$ when $\ell\in\N_{\geq 2}$ and at a rate of order $\cO(1/t)$ when $\ell=1$.
FW with line-search converges at a rate of order $\cO(1/t)$ when $\rho = \frac{1}{4}$ and linearly when $\rho = 2$. In the latter setting, FW with line-search solves the problem exactly after $|\supp(x^*)|$ iterations.
}\label{fig:experiments_polytope}
\end{figure}

The results in Figure~\ref{fig:experiments_polytope}, see Section~\ref{sec:experiment_polytope} for details, show that when the feasible region $\cC$ is a polytope, $f=\frac{1}{2}\|x-\rho \bar{\oneterm}\|_2^2$, where $\rho\in\{\frac{1}{4},2\}$, is strongly convex, the constrained optimal solution $x^*\in\argmin_{x\in\cC} f(x)$ lies in the relative interior of an at least one-dimensional face of $\cC$, FW with open-loop step-sizes $\eta_t=\frac{\ell}{t+\ell}$, where $\ell\in\N_{\geq 2}$, converges at a rate of order $\cO(1/t^2)$ and FW with open-loop step-size $\eta_t=\frac{1}{t+1}$ converges at a rate of order $\cO(1/t)$. For the same setting, FW with line-search either converges at a rate of order $\cO(1/t)$ when $\rho=\frac{1}{4}$ or linearly when $\rho=2$. We have thus demonstrated both theoretically and in practice that there exist settings for which FW with open-loop step-sizes converges non-asymptotically faster than FW with line-search or short-step.

\section{Algorithmic variants}\label{sec:fw_variants}

In Section~\ref{sec:ol_faster_than_ls_ss}, we established that when the feasible region $\cC$ is a polytope, the objective $f$ is strongly convex, and the unique minimizer $x^*\in\argmin_{x\in\cC}f(x)$ lies in the relative interior of an at least one-dimensional face $\cC^*$ of $\cC$, FW with open-loop step-size $\eta_t = \frac{4}{t+4}$ converges at a rate of order $\cO(1/t^2)$. Combined with the convergence-rate lower bound of $\Omega(1/t^{1+\epsilon})$ for any $\epsilon > 0$ for FW with line-search or short-step by \citet{wolfe1970convergence}, this characterizes a problem setting for which FW with open-loop step-sizes converges non-asymptotically faster than FW with line-search or short-step. However, our accelerated convergence rate only holds when strict complementarity or similar assumptions, see Remark~\ref{rem:relaxation_of_strict_complementarity}, hold. Similarly, the accelerated convergence rate of MFW  \citep{li2021momentum} in the described setting also relies on the assumption of strict complementarity.

Here, we address this gap in the literature and present two FW variants employing open-loop step-sizes that admit convergence rates of order $\cO(1/t^2)$ in the setting of the lower bound due to \citet{wolfe1970convergence} without relying on the assumption of strict complementarity.

\subsection{Decomposition-invariant pairwise Frank-Wolfe algorithm}\label{sec:difw}

Using the proof blueprint from Section~\ref{sec:blueprint}, we derive accelerated convergence rates for the decomposition-invariant pairwise Frank-Wolfe algorithm (DIFW) \citep{garber2016linear} in the setting of the lower bound due to \citet{wolfe1970convergence}.
DIFW with line-search or step-size as in Option 1 in \citet[Algorithm~3]{garber2016linear} converges linearly when the feasible region is a specific type of polytope and the objective function is strongly convex.
Benefits of DIFW are that the convergence rate does not depend on the dimension of the problem but the sparsity of the optimal solution $x^* \in \argmin_{x\in \cC} f(x)$, that is, $|\supp(x^*)| = |\{x^*_i \neq 0 \mid i \in \{1, \ldots, d\}\}| \ll d$, and it is not necessary to maintain a convex combination of the iterate $x_t$ throughout the algorithm's execution. The latter property leads to reduced memory overhead compared to other variants of FW that admit linear convergence rates in the setting of \citet{wolfe1970convergence}.
The main drawback of DIFW is that the method is not applicable to general polytopes, but only feasible regions that are similar to the simplex, that is, of the form described below.
\begin{definition}[Simplex-like polytope (SLP)]\label{def:difw}
Let $\cC\subseteq \R^d$ be a polytope such that $\cC$ can be described as $\cC = \{x\in \R^d \mid x\geq 0, Ax=b\}$ for $A\in\R^{m \times d}$ and $b\in \R^m$ for some $m\in \N$ and all vertices of $\cC$ lie on the Boolean hypercube $\{0, 1\}^d$. Then, we refer to $\cC$ as a \emph{simplex-like polytope} (SLP).
\end{definition}
Examples of SLPs are the probability simplex and the flow, perfect matchings, and marginal polytopes, see \citet{garber2016linear} and references therein for more details.
In this section, we show that DIFW with open-loop step-size $\eta_t = \frac{8}{t+8}$ admits a convergence rate of order up to $\cO(1/t^2)$ when optimizing a function satisfying \eqref{eq:heb} over a SLP.

\begin{algorithm}[th!]
\SetKwInput{Input}{Input} \SetKwInput{Output}{Output}
\SetKwComment{Comment}{$\triangleright$\ }{}
  \caption{Decomposition-invariant pairwise Frank-Wolfe algorithm (DIFW) \citep{garber2016linear}}\label{algo:difw}
  \Input{$x_0\in\cC$, step-sizes $\eta_t\in [0, 1]$ for $t\in\{0,\ldots, T-1\}$.}
  \hrulealg
  {$x_1 \in \argmin_{p\in \cC} \langle \nabla f(x_0), p - x_0 \rangle$}\\
  \For{$t= 0, \ldots, T-1 $}{
        {$p_t^+ \in \argmin_{p\in \cC} \langle \nabla f(x_t), p - x_t \rangle$\label{line:fw_vertex_difw}}\\
        {Define the vector $\tilde{\nabla}f(x_t) \in \R^d$ entry-wise for all $i\in\{1, \ldots, d\}$:
        \begin{equation*}
            (\tilde{\nabla} f(x_t))_i = \begin{cases}
            (\nabla f(x_t))_i, & \text{if} \ (x_t)_i > 0 \\
            -\infty , & \text{if} \ (x_t)_i = 0.
        \end{cases}\label{line:defining_gradient_difw}
        \end{equation*}}\\
        {$p_t^- \in \argmin_{p\in \cC} \langle -\tilde{\nabla}f(x_t), p - x_t\rangle$}\\
        {Let $\delta_t$ be the smallest natural number such that $2^{-\delta_t} \leq \eta_t$, and define the new step-size $\gamma_t \gets 2^{-\delta_t}$.}\\
        {$x_{t+1} \gets x_t + \gamma_t (p_t^+ - p_t^-)$}
    }
\end{algorithm}

\subsubsection{Algorithm overview}
We refer to $p_t^+$ and $p_t^-$ as the FW vertex and away vertex, respectively. At iteration $t\in\{0,\ldots, T\}$, consider the representation of $x_t$ as a convex combination of vertices of $\cC$, that is, $x_t = \sum_{i=0}^{t-1} \lambda_{p_i,t} p_i$, where $p_i \in \vertices (\cC)$  and $\lambda_{p_i, t} \geq 0$ for all $i\in\{0,\ldots, t-1\}$ and $\sum_{i=0}^{t-1}\lambda_{p_i, t} = 1$. 
DIFW takes a step in the direction
$\frac{p_t^+-p_t^-}{\|{p_t^+-p_t^-}\|_2}$, which
moves weight from the away vertex $p_t^-$ to the FW vertex $p_t^+$. Note that DIFW does not need to actively maintain a convex combination of $x_t$ because of the assumption that the feasible region is a SLP.

\subsubsection{Convergence rate of order $\cO(1/t)$}
We first derive a baseline convergence rate of order $\cO(1/t)$ for DIFW with open-loop step-size $\eta_t = \frac{8}{t+8}$.
\begin{proposition}[Convergence rate of order $\cO(1/t)$]\label{prop:baseline_difw}
Let $\cC\subseteq \R^d$ be a SLP of diameter $\delta > 0$ and let $f\colon \cC \to \R$ be a convex and $L$-smooth function with unique minimizer $x^*\in\argmin_{x\in\cC}f(x)$. Let $T\in\N$ and $\eta_t = \frac{8}{t+8}$ for all $t\in\Z$.
Then, for the iterates of Algorithm~\ref{algo:difw} with open-loop step-size $\eta_t$, it holds that
$h_t \leq \frac{32L \delta^2}{t+7} = \eta_{t-1} 4 L \delta^2  $
for all $t\in\{1,\ldots, T\}$.
\end{proposition}
\begin{proof}
Let $t\in\{0, \ldots, T-1\}$.
Feasibility of $x_t$ follows from Lemma~1 in \citet{garber2016linear}. Further, in the proof of Lemma $3$ in \citet{garber2016linear}, it is shown that
\begin{align}\label{eq:difw_basic_bound}
    h_{t+1} & \leq h_t + \frac{\eta_t \langle \nabla f(x_t), p_t^+ - p_t^-\rangle}{2} + \frac{\eta_t^2 L \delta^2}{2}.
\end{align}
Consider an irreducible representation of $x_t$ as a convex sum of vertices of $\cC$, that is, $x_t = \sum_{i=0}^{k}\lambda_{p_i, t} p_i$ such that $p_i\in\vertices(\cC)$ and $\lambda_{p_i, t} > 0$ for all $i \in \{ 0, \ldots , k\}$, where $k\in \N$.
By Observation 1 in \citet{garber2016linear}, it holds that 
$\langle \nabla f(x_t), p_i\rangle \leq \langle \nabla f(x_t), p_t^-\rangle$
for all $i\in\{0, \ldots, k\}$.
Thus, $\langle \nabla f(x_t), x_t - p_t^- \rangle  \leq \langle \nabla f(x_t), x_t - \sum_{i=0}^{k}\lambda_{p_i, t} p_i \rangle \leq \langle \nabla f(x_t), x_t - x_t \rangle  = 0$.
Plugging this inequality into \eqref{eq:difw_basic_bound}, using $\langle \nabla f(x_t), p_t^+ -x_t \rangle \leq -h_t$, and using $h_1 \leq \frac{L\delta^2}{2} $, which is derived in the proof of Theorem~1 in \citet{garber2016linear}, we obtain
\begin{align}
    h_{t+1} & \leq h_t + \frac{\eta_t \langle \nabla f(x_t), p_t^+ -x_t \rangle}{2} + \frac{\eta_t \langle \nabla f(x_t),  x_t - p_t^-\rangle}{2} + \eta_t^2\frac{ L \delta^2}{2} \nonumber\\
    & \leq (1 - \frac{\eta_t}{2}) h_t + \eta_t^2\frac{ L \delta^2}{2} \label{eq:need_for_difw}\\
    & \leq \prod_{i = 1}^t (1 - \frac{\eta_i}{2})h_1 + \frac{L\delta^2}{2} \sum_{i = 1}^t \eta_i^2 \prod_{j = i+1}^t (1 - \frac{\eta_j}{2}) \nonumber\\
    & = \frac{5\cdot 6 \cdot 7 \cdot 8}{(t+5)(t+6)(t+7)(t+8)} h_1 + \frac{L \delta^2}{2} \sum_{i = 1}^t\frac{8^2}{(i+8)^2} \frac{(i+5)(i+6)(i+7)(i+8)}{(t+5) (t+6)(t+7)(t+8)}\nonumber\\
    & \leq \frac{64L\delta^2}{2}(\frac{1}{(t+7) (t+8)} + \frac{t}{(t+7) (t+8)}) \nonumber\\
    & \leq \frac{32L\delta^2}{t+8}.\nonumber
\end{align}
\end{proof}

\subsubsection{{Convergence rate of order up to $\cO(1/t^2)$}}
Then, acceleration follows almost immediately from the analysis performed in \citet{garber2016linear}.
\begin{theorem}[Convergence rate of order up to $\cO(1/t^2)$]\label{thm:difw_slp}
Let $\cC\subseteq\R^d$ be a SLP of diameter $\delta > 0$ and let $f\colon \cC \to \R$ be a convex and $L$-smooth function satisfying a $(\mu, \theta)$-\eqref{eq:heb} for some $\mu > 0$ and $\theta \in [0, 1/2]$.
Let $T\in\N$ and $\eta_t = \frac{8}{t+8}$ for all $t\in\Z$.
Then, for the iterates of Algorithm~\ref{algo:difw} with open-loop step-size $\eta_t$, it holds that
\begin{align}\label{eq:difw_pg}
    h_t  & \leq \max \left\{ \eta_{t-2}^{1/(1-\theta)}\frac{L\delta^2}{2}, \left(\eta_{t-2} 2\mu L \delta^2\sqrt{|\supp(x^*)|} \right)^{1/(1-\theta)} + \eta_{t-2}^2\frac{ L \delta^2}{2} \right\}.
\end{align}
for all $t\in\{1,\ldots, T\}$.
\end{theorem}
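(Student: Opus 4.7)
The plan is to follow the ``scaling inequality plus Lemma~\ref{lemma:sequences}'' blueprint of Section~\ref{sec:accelerated}, reusing the work already done in the proof of Proposition~\ref{prop:baseline_difw} and importing the SLP-specific scaling inequality of \citet{garber2016linear} as the only genuinely new ingredient. The starting point is the smoothness-plus-$\gamma_t\in[\eta_t/2,\eta_t]$ progress bound
\[
h_{t+1} \leq h_t + \frac{\eta_t}{2}\langle \nabla f(x_t), p_t^+ - p_t^-\rangle + \frac{\eta_t^2 L \|p_t^+ - p_t^-\|_2^2}{2},
\]
which is the DIFW analog of \eqref{eq:start_progress_bound}. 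The argument rehearsed in the proof of Proposition~\ref{prop:baseline_difw}, namely that the away direction satisfies $\langle \nabla f(x_t), x_t - p_t^-\rangle \leq 0$, combined with optimality of $p_t^+$ and convexity of $f$, already delivers $\langle \nabla f(x_t), p_t^+ - p_t^-\rangle \leq -h_t$ and therefore the baseline recursion \eqref{eq:need_for_difw}.

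To upgrade this to an $h_t^{1-\theta}$-type bound, I would invoke the SLP-tailored scaling inequality of \citet{garber2016linear}, which compares the DIFW pair direction to the optimal descent direction with a factor controlled purely by the sparsity $\card(x^*)$. Chaining this scaling with \eqref{eq:scaling_heb} produces an inequality schematically of the form
\[
\langle -\nabla f(x_t), p_t^+ - p_t^-\rangle \;\geq\; \frac{1}{\mu\sqrt{\card(x^*)}}\cdot h_t^{1-\theta}\cdot \|p_t^+-p_t^-\|_2,
\]
which is exactly the analog, for DIFW on an SLP, of the scaling inequalities developed in Section~\ref{sec:accelerated}. This chain is essentially the one \citet{garber2016linear} carry out for $\theta = 1/2$ (strong convexity); extending it to general $\theta \in [0,1/2]$ is routine, which is why the theorem statement promises that most of the work has already been done.

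Plugging the resulting scaling bound into the progress bound and taking a convex combination with the baseline recursion \eqref{eq:need_for_difw}, in the spirit of how the proof of Theorem~\ref{thm:interior} combines \eqref{eq:always_combine_with_this} with a scaling bound, yields a recursion of the shape required by Lemma~\ref{lemma:sequences},
\[
h_{t+1} \leq \left(1 - \tfrac{\eta_t}{4}\right) h_t - \eta_t A\, C_t\, h_t^{1-\theta} + \eta_t^2 B\, C_t,
\]
with $A$ proportional to $1/(\mu\sqrt{\card(x^*)})$, $B=L\delta^2/2$, $C=1$, $C_t\in[0,1]$ absorbing the residual $\|p_t^+-p_t^-\|_2$-dependence via $\|p_t^+-p_t^-\|_2\leq \delta$, and $\psi = \theta$. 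Because $\eta_t = 8/(t+8)$ obeys the identity $1 - \eta_t/4 = \eta_t/\eta_{t-2}$, which is the exact analog of $1 - \eta_t/2 = \eta_t/\eta_{t-2}$ used inside Lemma~\ref{lemma:sequences} for $\eta_t = 4/(t+4)$, the case analysis in the proof of Lemma~\ref{lemma:sequences} applies essentially verbatim (with the shift-by-two bookkeeping unchanged), and combined with the base case $h_1\leq L\delta^2/2$ from the proof of Proposition~\ref{prop:baseline_difw} produces the claimed bound.

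The main obstacle I anticipate is not conceptual but purely one of constant calibration: the SLP scaling inequality of \citet{garber2016linear}, the weight of the convex combination with \eqref{eq:need_for_difw}, and the bound $\|p_t^+-p_t^-\|_2\leq \delta$ must be tuned so that the ratio $B/A$ comes out to $2\mu L\delta^2\sqrt{\card(x^*)}$ and $BC$ comes out to $L\delta^2/2$, matching the two arguments of the $\max$ in \eqref{eq:difw_pg}. No genuine obstruction beyond this calibration is expected, which is why the authors remark that the result follows ``almost immediately'' from \citet{garber2016linear}.
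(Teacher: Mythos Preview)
Your plan is essentially the paper's proof: extend the SLP scaling inequality of \citet{garber2016linear} from strong convexity to \eqref{eq:heb}, average the resulting recursion with the baseline \eqref{eq:need_for_difw}, and then rerun the induction of Lemma~\ref{lemma:sequences} using the identity $1-\eta_t/4=\eta_t/\eta_{t-2}$ for $\eta_t=8/(t+8)$. One small correction: the Garber--Hazan scaling does \emph{not} carry a $\|p_t^+-p_t^-\|_2$ factor; after the \eqref{eq:heb} extension it reads directly $\langle \nabla f(x_t), p_t^--p_t^+\rangle \geq h_t^{1-\theta}/(\mu\sqrt{\card(x^*)})$, so you obtain $h_{t+1}\leq (1-\eta_t/4)h_t - \frac{\eta_t}{4\mu\sqrt{\card(x^*)}}h_t^{1-\theta}+\frac{\eta_t^2 L\delta^2}{2}$ with $C_t=C=1$ and $B/A=2\mu L\delta^2\sqrt{\card(x^*)}$ falling out immediately, and the calibration issue you anticipated disappears.
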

\begin{proof}
Let $t\in\{1,\ldots, T-1\}$.
We can extend Lemma~$3$ in \citet{garber2016linear} from $\alpha_f$-strongly convex functions to convex functions satisfying \eqref{eq:heb}. Strong convexity is only used to show that $\Delta_t := \sqrt{\frac{2 |\supp(x^*)| h_t}{\alpha_f}}$ satisfies
$\Delta_t \geq \sqrt{|\supp(x^*)|} \|x_t - x^*\|_2$.
Here, we instead define $\Delta_t:= \sqrt{|\supp(x^*)|}\mu h_t^\theta$ for a function $f$ satisfying a $(\mu, \theta)$-\eqref{eq:heb}. Then, $\Delta_t \geq \sqrt{|\supp(x^*)|} \|x_t - x^*\|_2$. By Lemma~$3$ in \citet{garber2016linear}, we have
$h_{t+1} \leq h_t -\frac{\eta_th_t^{1-\theta}}{2\mu \sqrt{|\supp(x^*)|}} + \eta_t^2\frac{ L \delta^2}{2}$.
Combined with \eqref{eq:need_for_difw},
\begin{align}\label{eq:reason_for_new_ss}
    h_{t+1} & \leq \left(1 - \frac{\eta_t}{4}\right)h_t -\frac{\eta_th_t^{1-\theta}}{4\mu \sqrt{|\supp(x^*)|}} + \eta_t^2\frac{ L \delta^2}{2}.
\end{align}
Using the same proof technique as in Lemma~\ref{lemma:sequences}, we prove that
\begin{align}\label{eq:difw_cd}
    h_t & \leq \max \left\{ \left(\eta_{t-2}\eta_{t-1}\right)^{1/(2(1-\theta))}\frac{L\delta^2}{2}, \left(\eta_{t-2}\eta_{t-1} \left(2\mu L \delta^2\sqrt{|\supp(x^*)|}\right)^2 \right)^{1/(2(1-\theta))} + \eta_{t-2}\eta_{t-1}\frac{ L \delta^2}{2} \right\}
\end{align}
for all $t\in\{1,\ldots, T\}$, which then implies \eqref{eq:difw_pg}.
For $t = 1$, $h_1 \leq \frac{L\delta^2}{2}$ and \eqref{eq:difw_cd} holds. Suppose that \eqref{eq:difw_cd} is satisfied for a specific iteration $t\in\{1,\ldots, T-1\}$. We distinguish between two cases.
First, suppose that
$h_t \leq (\eta_t 2\mu L \delta^2\sqrt{|\supp(x^*)|} )^{1/(1-\theta)}
$.
Plugging this bound on $h_t$ into \eqref{eq:reason_for_new_ss} yields
$h_{t+1} \leq (\eta_t 2\mu L \delta^2\sqrt{|\supp(x^*)|} )^{1/(1-\theta)} + \frac{ \eta_t^2L \delta^2}{2} \leq (\eta_{t-1}\eta_t (2\mu L \delta^2\sqrt{|\supp(x^*)|})^2 )^{1/(2(1-\theta))} + \eta_{t-1}\eta_t\frac{ L \delta^2}{2}$.
Next, suppose that
$h_t \geq (\eta_t 2\mu L \delta^2\sqrt{|\supp(x^*)|} )^{1/(1-\theta)}$.
Plugging this bound on $h_t$ into \eqref{eq:reason_for_new_ss} and using the induction assumption yields 
{\footnotesize
\begin{align}\label{eq:need_small_steps}
    h_{t+1} &\leq  (1 - \frac{\eta_t}{4})h_t  + 0 \\
    &= \frac{t+6}{t+8} h_t\nonumber\\
    &\leq \frac{\eta_{t}}{\eta_{t-2}} h_t\nonumber\\
    &\leq  \frac{\eta_{t}}{\eta_{t-2}} \max \left\{ \left(\eta_{t-2}\eta_{t-1}\right)^{1/(2(1-\theta))}\frac{L\delta^2}{2}, \left(\eta_{t-2}\eta_{t-1} \left(2\mu L \delta^2\sqrt{|\supp(x^*)|}\right)^2 \right)^{1/(2(1-\theta))} + \eta_{t-2}\eta_{t-1}\frac{ L \delta^2}{2} \right\}\nonumber\\
    & \leq \max \left\{ \left(\eta_{t-1}\eta_{t}\right)^{1/(2(1-\theta))}\frac{L\delta^2}{2}, \left(\eta_{t-1}\eta_{t} \left(2\mu L \delta^2\sqrt{|\supp(x^*)|}\right)^2 \right)^{1/(2(1-\theta))} + \eta_{t-1}\eta_{t}\frac{ L \delta^2}{2} \right\}, \nonumber
\end{align}
}where the last inequality holds due to $\frac{\eta_t}{\eta_{t-2}}(\eta_{t-2}\eta_{t-1})^{1/(2(1-\theta))} \leq (\eta_{t-1}\eta_{t})^{1/(2(1-\theta))}$ for $\frac{\eta_t}{\eta_{t-2}}\in [0,1]$ and $1/(2(1-\theta)) \in [1/2,1]$.
In either case, \eqref{eq:difw_cd} is satisfied for $t+1$. By induction, the theorem follows.
\end{proof}

Below, we discuss the technical necessity for $\eta_t = \frac{8}{t+8}$ instead of $\eta_t = \frac{4}{t+4}$ in Theorem~\ref{thm:difw_slp}.

\begin{remark}[Necessity of $\eta_t = \frac{8}{t+8}$]\label{rem:necessity_for_switch_difw}
    Note that Inequality~\eqref{eq:reason_for_new_ss} is responsible for making our usual proof with $\eta_t = \frac{4}{t+4}$, $t\in\Z$, impossible. Indeed, for $\eta_t = \frac{4}{t+4}$,
    $(1- \frac{\eta_t}{4}) = \frac{t+3}{t+4}$,
    which is not enough progress in, for example, \eqref{eq:need_small_steps} assuming that $\theta = \frac{1}{2}$, to obtain a convergence rate of order $\cO(1/t^2)$.
\end{remark}

\subsection{Away-step Frank-Wolfe algorithm}\label{sec:afw}
\begin{algorithm}[h]
\SetKwInput{Input}{Input} \SetKwInput{Output}{Output}
\SetKwComment{Comment}{$\triangleright$\ }{}
\caption{Away-step Frank-Wolfe algorithm (AFW) with open-loop step-sizes}\label{algo:afw}
\Input{$x_0\in \vertices (\cC)$, step-sizes $\eta_t\in [0, 1]$ for $t\in\{0, \ldots, T-1\}$.}
\hrulealg
{$\cS_0 \gets \{x_0\}$}\\
{$\lambda_{p,0} \gets \begin{cases}
1, & \text{if} \ p = x_0\\
0, & \text{if} \ p \in\vertices(\cC) \setminus \{x_0\}
\end{cases}$}\\
{$\ell_0 \gets 0$ \Comment*[f]{$\ell_t:$ number of progress steps performed before iteration $t$}\label{line:l}}\\
\For{$t= 0,\ldots, T-1 $}{
    {$p_{t}^{FW} \in \argmin_{p \in \cC} \langle\nabla f(x_{t}), p- x_{t}\rangle$\label{line:p_fw}}\\
        {$p_{t}^{A} \in \argmax_{p \in \cS_t} \langle\nabla f(x_{t}), p- x_{t}\rangle$\label{line:p_a}}\\
        \uIf{$\langle\nabla f (x_t) ,p_t^{FW} - x_t\rangle \leq \langle \nabla f(x_t), x_t -p_t^A\rangle$\label{eq: which_direction}}{
            {$d_t \gets p_t^{FW} - x_t$; $\eta_{t, \max} \gets 1$ \label{eq:AFW_line_p_FW}}}
        \Else{
            {$d_t \gets x_t - p_t^A$; $\eta_{t, \max} \gets \frac{\lambda_{{p_t^A},t}}{1-\lambda_{{p_t^A},t}}$
            \label{eq:AFW_line_p_A}}
            }
        {$\gamma_t \gets \min\left\{\eta_{\ell_t}, \eta_{t, \max}\right\}$\label{line:gamma_t}}\\
        {$x_{t+1} \gets x_t + \gamma_t d_t$}\\
        \uIf{$\langle\nabla f (x_t) ,p_t^{FW} - x_t\rangle \leq \langle \nabla f(x_t), x_t -p_t^A\rangle$}{
            {$\lambda_{p,t+1} \gets
            \begin{cases}
            (1- \gamma_t) \lambda_{p, t} + \gamma_t, & \text{if} \ p=p_t^{FW}\\
            (1-\gamma_t) \lambda_{p,t}, & \text{if} \ p\in\vertices(\cC) \setminus\{p_t^{FW}\}
            \end{cases}$}
            }
        \Else{
            {$\lambda_{p,t+1} \gets
            \begin{cases}
            (1+ \gamma_t) \lambda_{p, t} - \gamma_t, & \text{if} \ p=p_t^{A}\\
            (1+\gamma_t) \lambda_{p,t}, & \text{if} \ p\in\vertices(\cC) \setminus\{p_t^{A}\}
            \end{cases}$}
            }
        {$\cS_{t+1} \gets \{p\in\vertices(\cC) \mid \lambda_{p, t+1} >0\}$}\\
        \uIf{$(\eta_{\ell_t} - \gamma_t) \langle \nabla f(x_t), p_t^A - p_t^{FW}\rangle \leq (\eta_{\ell_t}^2 - \gamma_t^2)L\delta^2 $\label{line:no_ds1}}{
            {$\ell_{t+1} \gets \ell_t + 1$\label{line:no_ds4} \Comment*[f]{progress step}}
            }
        \Else{\label{line:no_ds3}
            {$\ell_{t+1} \gets \ell_t$\Comment*[f]{non-progress step}\label{line:no_ds2}}
            }\label{line:no_ds5}
    }
\end{algorithm}
\begin{algorithm}[h]
  \caption{Away-step Frank-Wolfe algorithm (AFW) with line-search \citep{guelat1986some}}\label{algo:afw_ls_ss}
  {Identical to Algorithm~\ref{algo:afw}, except that Lines~\ref{line:l}, \ref{line:no_ds1}, \ref{line:no_ds4},  \ref{line:no_ds3}, \ref{line:no_ds2}, and \ref{line:no_ds5} have to be deleted and Line~\ref{line:gamma_t} has to be replaced by $\gamma_t \in \argmin_{\gamma\in [0, \eta_{t, \max}]}f(x_t + \gamma d_t)$.}
\end{algorithm}
In this section, we derive a version of the away-step Frank-Wolfe algorithm (AFW) \citep{guelat1986some, lacoste2015global} with step-size $\eta_t = \frac{4}{t+4}$ that admits a convergence rate of order up to $\cO(1/t^2)$ when optimizing a function satisfying \eqref{eq:heb} over a polytope.

\subsubsection{Algorithm overview} 
For better understanding, we first discuss AFW with line-search, which is presented in Algorithm~\ref{algo:afw_ls_ss}.
At iteration $t\in\{0,\ldots, T\}$, we can write
$x_t = \sum_{i=0}^{t-1} \lambda_{p_i,t} p_i$, where $p_i \in \vertices (\cC)$  and $\lambda_{p_i, t} \geq 0$ for all $i\in\{0,\ldots, t-1\}$ and $\sum_{i=0}^{t-1}\lambda_{p_i, t} = 1$.
We refer to $\cS_t := \{p_i \mid \lambda_{p_i, t} > 0\}$ as the active set at iteration $t$. Note that maintaining the active set can incur a significant memory overhead. However, with AFW, instead of being limited to taking a step in the direction of a vertex $p_t^{FW}\in \vertices (\cC)$ as in Line~\ref{line:p_t_det} of vanilla FW, we are also able to take an away step: Compute $p_t^{A} \in \argmax_{p\in \cS_t} \langle \nabla f (x_t), p - x_t \rangle$ and take a step away from vertex $p_t^{A}$, removing weight from vertex $p_t^{A}$ and adding it to all other vertices in the active set. Away steps facilitate the option of takin drop steps. A drop step occurs when a vertex gets removed from the active set. In case $x^*$ lies in the relative interior of an at least one-dimensional face $\cC^*$ of  $\cC$, drop steps allow AFW to get rid of bad vertices in the convex combination representing $x_t$, that is, vertices not in $\cC^*$. As soon as the optimal face is reached, that is, $x_t \in \cC^*$, the problem becomes that of having the optimal solution in the relative interior of $\cC^*$, for which FW with line-search admits linear convergence rates.

We next explain AFW with step-size $\eta_t = \frac{4}{t+4}$, presented in Algorithm~\ref{algo:afw}, which requires a slight modification of the version presented in \citet{lacoste2015global}. The main idea is to replace line-search with the open-loop step-size $\eta_t = \frac{4}{t+4}$. However, as we motivate in detail below, at iteration $t\in\{0,\ldots, T-1\}$, AFW's step-length is $\eta_{\ell_t}$, where $0 = \ell_0 \leq \ell_1 \leq \ldots \leq \ell_{T-1}\leq T-1$, that is, AFW may perform multiple steps of the same length.
Let $t\in\{0,\ldots, T-1\}$.
Note that for $d_t$ obtained from either Line~\eqref{eq:AFW_line_p_FW} or Line~\eqref{eq:AFW_line_p_A} in Algorithm~\ref{algo:afw}, it holds that $\langle \nabla f (x_t), d_t \rangle \leq \langle \nabla f(x_t),  p_t^{FW} -p_t^A  \rangle /2$.
By $L$-smoothness, 
\begin{align}\label{eq:contract_afw_poly_without_scaling}
    h_{t+1} & \leq h_t - \frac{\gamma_t  \langle \nabla f(x_t), p_t^A - p_t^{FW}\rangle}{2} +  \frac{\gamma_t^2 L\delta^2}{2}.
\end{align}
Working towards a convergence rate of order up to $\cO(1/t^2)$, we need to characterize a subsequence of steps for which an inequality of the form \eqref{eq:gotta_derive_this} holds. To do so, let
\begin{align*}
    g_t(\gamma) := - \frac{\gamma  \langle \nabla f(x_t), p_t^A - p_t^{FW}\rangle}{2} +  \frac{\gamma^2 L\delta^2}{2} \qquad \text{for} \ \gamma \in [0,1].
\end{align*}
We refer to all iterations $t\in\{0,\ldots, T-1\}$ such that $g_t(\gamma_t) \leq g_t(\eta_{\ell_t})$ as \emph{progress steps} and denote the number of progress steps performed before iteration $t\in\{0,\ldots, T\}$ by $\ell_t$, see Lines~\ref{line:l}, \ref{line:gamma_t}, and~\ref{line:no_ds1}-\ref{line:no_ds5} of Algorithm~\ref{algo:afw}. Thus, a progress step occurs during iteration $t$ if and only if the inequality in Line~\ref{line:no_ds1} is satisfied, which necessitates the computation of the smoothness constant $L$ of $f$ prior to the execution of the algorithm.
A non-drop step is always a progress step as $\gamma_t = \eta_{\ell_t}$ and the following lemma shows that drop steps which are non-progress steps do not increase the primal gap.
\begin{lemma}[Drop-step characterization]\label{lemma:contraction}
Let $g\colon [0,1] \to \R$ be defined via $g(\eta) := - \eta A + \eta^2 B$, where $A,B > 0$. For $t\in\N$, let $\eta_t = \frac{4}{t+4}$ and $\gamma_t \in [0, \eta_t]$. Then, $g(\gamma_t) \leq g(0)$ or $g(\gamma_t) \leq g(\eta_t)$.
\end{lemma}
\begin{proof}
By case distinction. Let $t\in\N$.
Case 1: $g(\eta_t) \leq g(0)$. By convexity,
$g(\gamma_t) = g(\lambda \eta_t + (1-\lambda) 0) \leq \lambda g(\eta_t) + (1-\lambda) g(0) \leq g(0) = 0$
where $\lambda \in [0,1]$.
Case 2: $g(\eta_t) > g(0)$. Then, $\eta_t > \eta^* \in \argmin_{\eta \in [0, \eta_t]} g(\eta)$, as $g$ is monotonously decreasing in the interval $[0, \eta^*]$. If $\eta^* \leq \gamma_t$, then $g(\gamma_t) \leq g(\eta_t)$ due to $g$ being monotonously increasing in $[\eta^*, \eta_t]$. If $\eta^* \geq \gamma_t$, then $g(\gamma_t) \leq g(0)$, as $g$ is monotonously decreasing in $[0, \eta^*]$. 
\end{proof}

Thus, a drop step is either a progress step and $h_{t+1} \leq h_t + g_t(\eta_{\ell_t})$, or $h_{t+1} \leq h_t$.

\begin{lemma}[Number of progress steps]\label{lemma:n_progress}
Let $\cC \subseteq \R^d$ be a compact convex set of diameter $\delta > 0$, let $f\colon \cC \to \R$ be a convex and $L$-smooth function. Let $T\in\N$ and $\eta_t = \frac{4}{t+4}$ for all $t\in\Z$.
Then, for all iterations $t\in\{0,\ldots, T\}$ of Algorithm~\ref{algo:afw} with step-size $\eta_t$, it holds that $\ell_t\geq \lceil t/2\rceil \geq t/2$.
\end{lemma}
\begin{proof}
Since all non-drop steps are progress steps and $\cS_t$, where $t\in\{0, \ldots, T\}$, has to contain at least one vertex of $\cC$, there cannot occur more drop steps than non-drop steps. Thus, $\ell_t\geq \lceil t/2\rceil \geq t/2$.
\end{proof}

\subsubsection{Convergence rate of order $\cO(1/t)$}
We first derive a baseline convergence rate of order $\cO(1/t)$ for AFW with step-size $\eta_t = \frac{4}{t+4}$.

\begin{proposition}[Convergence rate of order $\cO(1/t)$]\label{prop:baseline_afw}
Let $\cC \subseteq \R^d$ be a compact convex set of diameter $\delta > 0$, let $f\colon \cC \to \R$ be a convex and $L$-smooth function. Let $T\in\N$ and $\eta_t = \frac{4}{t+4}$ for all $t\in\Z$.
Then, for the iterates of Algorithm~\ref{algo:afw} with step-size $\eta_t$, it holds that
$h_t \leq \frac{ 16 L \delta^2}{t+6} = \eta_{t+2} 4 L \delta^2$
for all $t\in\{1,\ldots, T\}$.
\end{proposition}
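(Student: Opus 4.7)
The plan is to mirror the two-stage strategy used implicitly for AFW with line search: first show that the primal gap decays as $\cO(1/\ell_t)$ where $\ell_t$ counts the number of progress steps up to iteration $t$, then show that $\ell_t \geq t/2$ by a bookkeeping argument.

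For the first stage, I would start from \eqref{eq:contract_afw_poly_without_scaling}, which gives $h_{t+1} \leq h_t + g(\gamma_t)$ with $g(\gamma) = -\gamma \langle \nabla f(x_t), p_t^A - p_t^{FW}\rangle/2 + \gamma^2 L\delta^2/2$. The key observation is that because $p_t^A = \argmax_{p\in \cS_t}\langle \nabla f(x_t), p-x_t\rangle$ and $x_t$ is a convex combination of vertices in $\cS_t$, we have $\langle \nabla f(x_t), p_t^A - x_t\rangle \geq 0$, so combining with convexity and the optimality of $p_t^{FW}$,
\begin{equation*}
\langle \nabla f(x_t), p_t^A - p_t^{FW}\rangle \geq \langle \nabla f(x_t), x_t - p_t^{FW}\rangle \geq \langle \nabla f(x_t), x_t - x^*\rangle \geq h_t.
\end{equation*}
For a progress step, the definition in Line~\ref{line:no_ds1} guarantees $g(\gamma_t) \leq g(\eta_{\ell_t})$, so $h_{t+1} \leq (1 - \eta_{\ell_t}/2)h_t + \eta_{\ell_t}^2 L\delta^2/2$ while $\ell_{t+1} = \ell_t + 1$. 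For a non-progress step, Lemma~\ref{lemma:contraction} gives $h_{t+1} \leq h_t$ while $\ell_{t+1} = \ell_t$. Combining these with the base case $h_1 \leq L\delta^2/2$ (from $\eta_0 = 1$ and $L$-smoothness) and a telescoping induction identical in spirit to the proof of Proposition~\ref{prop:generalization_jaggi}, I would establish $h_t \leq \frac{8L\delta^2}{\ell_t + 3}$.

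For the second stage, I partition the first $t$ iterations into FW steps, non-drop away steps, and drop steps. FW steps and non-drop away steps take $\gamma_t = \eta_{\ell_t}$, so they trivially satisfy $g(\gamma_t) \leq g(\eta_{\ell_t})$ and hence are progress steps. The only potentially non-progress iterations are drop steps, and drop steps are bounded by the number of vertices ever placed into the active set, which in turn is bounded by the number of FW steps plus $|\cS_0| = 1$. Therefore, non-progress steps $\leq$ FW steps $+\,1\leq$ progress steps $+\,1$, which (together with $\ell_1 = 1$ since the first iteration must be a non-drop FW step) yields $\ell_t \geq t/2$. Substituting into the bound from the first stage gives $h_t \leq \frac{8L\delta^2}{t/2 + 3} = \frac{16L\delta^2}{t+6}$.

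The main obstacle is getting the constants in the bookkeeping to match exactly. In particular, I need to carefully argue that $\ell_t \geq t/2$ (rather than a weaker bound like $(t-1)/2$) to recover the stated denominator $t+6$ as opposed to $t+7$ or $t+8$; this likely requires a careful handling of the first iteration and the fact that drop steps cannot occur before at least one FW step has enlarged the active set. A secondary subtlety is ensuring that the telescoping inductive step still works even when $\ell_t$ does not increase, which is handled automatically by the non-increase property $h_{t+1} \leq h_t$ on non-progress iterations, since the bound $8L\delta^2/(\ell_t + 3)$ is unchanged.
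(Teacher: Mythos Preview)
Your proposal is correct and follows essentially the same two-stage structure as the paper: first bound $h_t$ in terms of the progress-step counter $\ell_t$, then show $\ell_t \geq t/2$ by the standard drop-step bookkeeping. The paper states the second stage in one sentence (``at least as many non-drop steps as drop steps''), which is the same argument you spell out.

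The only technical difference is in the per-progress-step recursion. You use $\langle \nabla f(x_t), p_t^A - p_t^{FW}\rangle \geq h_t$ (from $p_t^A$ maximizing over the active set) together with the progress-step condition $g(\gamma_t)\le g(\eta_{\ell_t})$ to obtain the contraction $(1-\eta_{\ell_t}/2)$. The paper instead works from the smoothness bound on $d_t$ directly, obtaining $h_{t+1}\le(1-\gamma_t)h_t+\gamma_t^2L\delta^2/2$, and then replaces $\gamma_t$ by $\eta_{\ell_t}$; this is immediate when $\gamma_t=\eta_{\ell_t}$ (all non-drop progress steps), and for the remaining case (a drop step that is a progress step, hence necessarily an away step) one has the stronger $\langle \nabla f(x_t),p_t^A-p_t^{FW}\rangle/2 \ge h_t$ from the algorithm's direction choice, which yields the full $(1-\eta_{\ell_t})$ contraction after passing through $g(\eta_{\ell_t})$. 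Your weaker $(1-\eta_{\ell_t}/2)$ recursion still telescopes to $h_{t^{(k)}}\le 8L\delta^2/(k+3)$ (the induction step reduces to $\tfrac{k+2}{k+3}+\tfrac{1}{k+4}\le 1$), so the final constant matches. Just be aware that the telescoping is not literally identical to Proposition~\ref{prop:generalization_jaggi}'s, only analogous; and your bookkeeping actually gives $\ell_t\ge t/2$ exactly (not $(t-1)/2$), since with $|\cS_0|=1$ the number of drops is at most the number of FW steps, all of which are progress steps.
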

\begin{proof}
Let $t\in\{0,\ldots, T-1\}$ and suppose that during iteration $t$, we perform a progress step.
Either $d_t = p_t^{FW}-x_t$, or $d_t = x_t - p_t^A$ and by Line \ref{eq: which_direction} of Algorithm~\ref{algo:afw},
$\langle \nabla f (x_t), x_t - p_t^A  \rangle \leq \langle \nabla f(x_t), p_t^{FW} - x_t \rangle$.
In either case, by $L$-smoothness,
\begin{align}\label{eq:basic_bf_bound_afw}
    h_{t+1} & \leq h_t - \gamma_{t} \langle \nabla f (x_t), x_t - p_t^{FW} \rangle + \frac{\gamma_{t}^2 L \delta^2}{2}  \leq (1 - \gamma_{t})h_t  + \frac{\gamma_{t}^2 L \delta^2}{2}.
\end{align}
By Lemma~\ref{lemma:contraction}, since non-progress steps do not increase the primal gap, we can limit our analysis to the subsequence of iterations corresponding to progress steps, $\{t^{(k)}\}_{k\in\{0,\ldots, \ell_T\}}$, for which, by \eqref{eq:basic_bf_bound_afw}, it holds that
\begin{align}\label{eq:necessary_for_acceleration}
    h_{t^{(k+1)}} & \leq (1 - \eta_{\ell_{t^{(k)}}}) h_{t^{(k)}} + \frac{\eta_{\ell_{t^{(k)}}}^2L\delta^2}{2} = (1 - \eta_k) h_{t^{(k)}} + \frac{\eta_k^2L\delta^2}{2}
\end{align}
for all $k\in\{0, \ldots, \ell_T-1\}$.
Since the first step is a non-drop step and thus a progress step, $h_{t^{(1)}} \leq h_1 \leq \frac{L\delta^2}{2}$. By similar arguments as in the proof of Proposition~\ref{prop:generalization_jaggi} starting with \eqref{eq:always_combine_with_this}, we obtain the bound $h_{t^{(k)}} \leq \frac{8L \delta^2}{k + 3}$ for all $k\in\{1,\ldots, \ell_T\}$. 
Since non-progress steps do not increase the primal gap and by Lemma~\ref{lemma:n_progress}, $h_t\leq h_{t^{(\ell_t)}}  \leq  \frac{8L \delta^2}{\ell_t+3} \leq  \frac{16L \delta^2}{t + 6} = \eta_{t+2} 4 L \delta^2$ for all $t\in\{1,\ldots, T\}$.
\end{proof}

\subsubsection{Convergence rate of order up to $\cO(1/t^2)$}
The introduction of away steps introduces another type of scaling inequality based on the \emph{pyramidal width}, a constant depending on the feasible region, see \citet{lacoste2015global} for more details.

\begin{lemma}[\citealp{lacoste2015global}]\label{lemma:away_step_scaling}
Let $\cC\subseteq \R^d$ be a polytope with pyramidal width $\omega > 0$ and let $f\colon \cC \to \R$ be a convex function with unique minimizer $x^*\in\argmin_{x\in\cC}f(x)$. Let $p^{FW} \in \argmin_{p\in \cC} \langle \nabla f (x), p \rangle$ and $p^A \in \argmax_{p\in \cS} \langle \nabla f(x),  p \rangle$ for some $\cS \subseteq \vertices (\cC)$ such that $x\in \conv(\cS)$. Then, it holds that
\begin{align}\tag{Scaling-A}\label{eq:scaling_a}
    \frac{\langle \nabla f(x), p^A - p^{FW}\rangle}{\omega} \geq \frac{\langle \nabla f (x), x - x^*\rangle}{\|x-x^*\|_2}.
\end{align}
\end{lemma}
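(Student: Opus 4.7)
The statement is a direct consequence of the definition of pyramidal width as introduced in \citet{lacoste2015global}, and I would follow the structure of their Theorem~6. The first step is to rewrite everything in terms of $r := -\nabla f(x)$. By the optimality conditions, $p^{FW}$ maximizes $\langle r, \cdot \rangle$ over $\cC$ and $p^A$ minimizes $\langle r, \cdot \rangle$ over $\cS$, so the left-hand side of \eqref{eq:scaling_a} satisfies
\begin{equation*}
\langle \nabla f(x), p^A - p^{FW}\rangle \;=\; \langle r, p^{FW}\rangle - \langle r, p^A\rangle \;=\; \max_{s\in \cC,\, v\in \cS}\, \langle r, s - v\rangle.
\end{equation*}
By convexity of $f$, both sides of \eqref{eq:scaling_a} are non-negative, and the right-hand side can be rewritten as $\langle r, x^* - x\rangle/\|x - x^*\|_2$.

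The heart of the argument is then a geometric lemma about the pyramidal directional width. Recall that the pyramidal width $\omega$ of $\cC$ is defined as the infimum, over faces $\cF$ of $\cC$, over $x \in \cF$ (with active set $\cS \subseteq \vertices(\cC) \cap \cF$), and over directions $r$ pointing into the feasible cone at $x$ inside $\aff(\cF)$, of the quantity
\begin{equation*}
\mathrm{PdirW}(\cS, r) \;=\; \max_{s\in \cC,\, v\in \cS}\, \bigl\langle r/\|r\|_2,\; s - v \bigr\rangle.
\end{equation*}
I would apply this definition with the direction $r = -\nabla f(x)$: the key observation is that $x^* - x$ lies in the feasible cone at $x$ (inside the smallest face containing $\cS$), and the component of $r$ along $x^* - x$ is non-negative by convexity. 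This means $r$ has a strictly positive feasible component (or both sides of \eqref{eq:scaling_a} are zero, in which case the inequality is trivial).

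From here the argument is: the normalized inner product of $r$ with the feasible direction $(x^* - x)/\|x^* - x\|_2$ is bounded above by the best feasible direction of the form $s - v$ with $s \in \cC$ and $v \in \cS$, renormalized; this is exactly the content of the pyramidal width definition. Writing it out,
\begin{equation*}
\bigl\langle r,\, (x^*-x)/\|x^*-x\|_2 \bigr\rangle
\;\leq\; \|r\|_2 \cdot \mathrm{PdirW}(\cS, r)/\|r\|_2
\;=\; \mathrm{PdirW}(\cS, r),
\end{equation*}
but actually the correct chain is to bound $\langle r, x^*-x\rangle/\|x^*-x\|_2$ using that $x^*-x$ admits a conic representation in terms of feasible extreme directions $s - v$. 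Dividing by $\omega$ and rearranging gives \eqref{eq:scaling_a}.

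\textbf{Main obstacle.} The technical subtlety is the conic decomposition step: showing that any feasible direction $d$ at $x$ (inside the affine hull of the face containing $\cS$) can be written as a non-negative combination of pairwise directions $s - v$ with $s \in \cC$ and $v \in \cS$, and that the normalized projection of $r$ along $d$ is therefore controlled by the worst-case pairwise pyramidal directional width. This is the content of the geometric decomposition lemma underlying the definition of pyramidal width in \citet{lacoste2015global}, and rather than reproving it, I would simply cite it and plug in $d = x^* - x$ to conclude.
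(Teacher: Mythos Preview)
The paper does not actually prove this lemma; it states it with a citation to \citet{lacoste2015global} and uses it as a black box. Your proposal correctly recognizes this and sketches the argument from that reference, so in that sense you are aligned with the paper's treatment.

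That said, your sketch has one genuinely muddled step worth flagging. The displayed chain
\[
\bigl\langle r,\, (x^*-x)/\|x^*-x\|_2 \bigr\rangle
\;\leq\; \|r\|_2 \cdot \mathrm{PdirW}(\cS, r)/\|r\|_2
\;=\; \mathrm{PdirW}(\cS, r)
\]
is not the right inequality and you immediately retract it (``but actually the correct chain is\ldots''). The actual mechanism in \citet{lacoste2015global} is not a Cauchy--Schwarz bound on $\langle r, d\rangle$ via $\|r\|_2$; rather, one first projects $r$ onto the span of the minimal face containing $x$, then uses that the pyramidal directional width is by definition the worst-case ratio $\langle \hat r, s-v\rangle$ over normalized feasible directions $\hat r$. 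The inequality $\langle r, x^*-x\rangle/\|x^*-x\|_2 \leq \langle r, p^A - p^{FW}\rangle/\omega$ comes directly from the definition of $\omega$ as an infimum over all such configurations, applied with the specific direction $\hat r = r/\|r\|_2$ (restricted to the appropriate face). No conic decomposition of $x^*-x$ into pairwise directions $s-v$ is needed for this particular inequality; that decomposition is used elsewhere in \citet{lacoste2015global} to relate the pyramidal width to the facial distance. If you clean up that middle step---invoke the definition of $\omega$ directly with the feasible direction $r$ at $x$---your argument goes through.
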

For example, the pyramidal width of the unit cube in $\R^d$ satisfies $\omega\geq 2/\sqrt{d}$ \citep{lacoste2015global} and the pyramidal width of the $\ell_1$-ball in $\R^d$ satisfies $\omega \geq {1}/{\sqrt{d-1}}$ \citep{wirth2023approximate}.
Combining \eqref{eq:scaling_a} and \eqref{eq:scaling_heb} leads to a subsequence of primal gaps of the form \eqref{eq:gotta_derive_this} and a convergence rate of order up to $\cO(1/t^2)$ for Algorithm~\ref{algo:afw}.
\begin{theorem}[Convergence rate of order up to $\cO(1/t^2)$]\label{theorem:afw_polytope}
Let $\cC\subseteq \R^d$ be a polytope of diameter $\delta >0$ and pyramidal width $\omega >0$ and let $f\colon \cC \to \R$ be a convex and $L$-smooth function satisfying a $(\mu, \theta)$-\eqref{eq:heb} for some $\mu > 0 $ and $\theta \in [0, 1/2]$ with unique minimizer $x^*\in\argmin_{x\in\cC} f(x)$. Let $T\in\N$ and $\eta_t=\frac{4}{t+4}$ for all $t\in\Z$. Then, for the iterates of Algorithm~\ref{algo:afw} with step-size $\eta_t$, it holds that
\begin{align}\label{eq:to_derive_acc_afw}
    h_{t} & \leq \max \left\{ \eta_{\lceil t/2 -2 \rceil}^{1/(1-\theta)} \frac{L\delta^2}{2}, \left(\frac{\eta_{\lceil t/2 -2 \rceil} 2 \mu L \delta^2}{\omega}\right)^{1/(1-\theta)} + \eta_{\lceil t/2 -2 \rceil}^2 \frac{L\delta^2}{2}\right\}
\end{align}
for all $t\in\{1,\ldots, T\}$.
\end{theorem}
\begin{proof}
Let $t\in\{0,\ldots, T-1\}$. By \eqref{eq:contract_afw_poly_without_scaling}, \eqref{eq:scaling_a}, convexity of $f$, and \eqref{eq:scaling_heb}, it holds that
$h_{t+1} \leq h_t - \frac{\gamma_t \omega  \langle \nabla f(x_t), x_t-x^*\rangle}{2\|x_t-x^*\|_2} +  \frac{\gamma_t^2L\delta^2}{2} \leq h_t - \frac{\gamma_t  \omega}{2 \mu} h_t^{1-\theta} +  \frac{\gamma_t^2L\delta^2}{2}$. Thus, by Lemma~\ref{lemma:contraction}, non-progress steps satisfy $h_{t+1} \leq h_t$ and progress steps satisfy
\begin{align}\label{eq:contract_afw_actual_step_size}
    h_{t+1}\leq h_t - \frac{\eta_{\ell_t}  \omega}{2 \mu} h_t^{1-\theta} +  \frac{\eta_{\ell_t}^2L\delta^2}{2}.
\end{align}
Since non-progress steps do not increase the primal gap, we can limit our analysis to the subsequence of iterations corresponding to progress steps, $\{t^{(k)}\}_{k\in\{0,\ldots, \ell_T\}}$, for which, by \eqref{eq:contract_afw_actual_step_size}, it holds that
\begin{align*}
h_{t^{(k+1)}} \leq h_{t^{(k)}} - \frac{\eta_{\ell_{t^{(k)}}}\omega}{2\mu} h_{t^{(k)}}^{1-\theta} +  \frac{\eta_{\ell_{t^{(k)}}}^2L\delta^2}{2} = h_{t^{(k)}} - \frac{\eta_k\omega}{2\mu} h_{t^{(k)}}^{1-\theta} +  \frac{\eta_k^2L\delta^2}{2}.
\end{align*}
Combined with \eqref{eq:necessary_for_acceleration}, it thus holds that
\begin{align}\label{eq:afw_apply_sequence_lemma}
    h_{t^{(k+1)}} \leq (1 - \frac{\eta_k}{2})h_{t^{(k)}} - \frac{\eta_k\omega}{4\mu} h_{t^{(k)}}^{1-\theta} +  \frac{\eta_k^2L\delta^2}{2}.
\end{align}
for all $k \in \{1,\ldots, \ell_T-1\}$.
Since the first step is a non-drop step and thus a progress step, $h_{t^{(1)}} \leq h_1 \leq \frac{L\delta^2}{2}$.
Inequality~\ref{eq:afw_apply_sequence_lemma} allows us to apply Lemma~\ref{lemma:sequences} with $A = \frac{\omega}{4 \mu}$, $B = \frac{L\delta^2}{2}$, $C= 1$, $C_{t^{(k)}} = 1$ for all $k \in \{1,\ldots, \ell_T-1\}$, $\psi = \theta$, and $\fwt =1$, resulting in
$h_{t^{(k)}} \leq \max \left\{ \eta_{k-2}^{1/(1-\theta)} \frac{L\delta^2}{2}, \left(\frac{\eta_{k-2} 2 \mu L \delta^2}{\omega}\right)^{1/(1-\theta)} + \eta_{k-2}^2 \frac{L\delta^2}{2}\right\}
$
for all $k \in \{1,\ldots, \ell_T\}$, where we used that $\eta_{-1} \geq \eta_0  = 1$.
Since non-progress steps do not increase the primal gap and by Lemma~\ref{lemma:n_progress}, \eqref{eq:to_derive_acc_afw} holds for all $t\in\{1,\ldots, T\}$.
\end{proof}

\section{{Kernel herding}}\label{sec:kernel_herding}
In this section, we explain why FW with open-loop step-sizes converges at a rate of order $\cO(1/t^2)$ in the kernel-herding setting of \citet[Section~5.1 and Figure~3, right]{bach2012equivalence}.

\subsection{{Kernel herding and the Frank-Wolfe algorithm}}

Kernel herding is equivalent to solving a quadratic optimization problem in a \emph{reproducing kernel Hilbert space} (RKHS) with FW.
To describe this application of FW, we use the following notation:
Let $\cY\subseteq \R$ be an observation space, $\cH$ a RKHS with inner product $\langle \cdot, \cdot\rangle_\cH$, and $\Phi\colon \cY \to \cH$ the feature map associating a real function on $\cY$ to any element of $\cH$ via $x(y) = \langle x, \Phi(y) \rangle_\cH$ for  $x\in \cH$ and $y\in \cY$.
The positive-definite kernel associated with $\Phi$ is denoted by $k\colon (y,z) \mapsto k(y,z) = \langle \Phi(y), \Phi(z)\rangle_\cH$ for $y, z \in \cY$. In kernel herding, the feasible region is usually the \emph{marginal polytope} $\cC$, the convex hull of all functions $\Phi(y)$ for $y\in \cY$, that is, $\cC = \conv \left( \left\{\Phi(y) \mid y \in \cY\right\} \right)\subseteq \cH$. 
We consider a fixed probability distribution $p$ over $\cY$ and denote the associated mean element by
$\mu = \E_{p(y)}\Phi(y) \in \cC$,
where $\mu \in \cC$ follows from the fact that the support of $p$ is contained in $\cY$.
In \citet{bach2012equivalence}, kernel herding was shown to be equivalent to solving the following optimization problem with FW and step-size $\eta_t = \frac{1}{t+1}$:
\begin{equation}\tag{OPT-KH}\label{eq:kh}
    \min_{x\in \cC}  f(x),
\end{equation}
where $f(x):=\frac{1}{2}\|x - \mu\|_\cH^2$.
This equivalence led to the study of FW (variants) with other step-sizes to solve \eqref{eq:kh} \citep{chen2012super,lacoste2015sequential,tsuji2022pairwise}.
Under the assumption that $\|\Phi(y)\|_\cH = R$ for some constant $R > 0$ and all $y\in \cY$, the herding procedure is well-defined and all extreme points of $\cC$ are of the form $\Phi(y)$ for $y\in \cY$ \citep{bach2012equivalence}.
Thus, the linear minimization oracle (LMO) in FW always returns an element of the form $\Phi(y) \in \cC$ for $y\in \cY$.
Furthermore, FW constructs iterates of the form $x_t = \sum_{i=1}^t v_i \Phi(y_i)$, where $v = (v_1, \ldots, v_t)^\intercal$ is a weight vector, that is, $\sum_{i=1}^tv_i = 1$ and $v_i \geq 0$ for all $i \in \{1, \ldots, t\}$, and $x_t$ corresponds to an empirical distribution $\tilde{p}_t$ over $\cY$ with empirical mean
$\tilde{\mu}_t = \E_{\tilde{p}_t(y)}\Phi(y) = \sum_{i=1}^t v_i \Phi(y_i)= x_t \in \cC$.
Then, according to \citet{bach2012equivalence},
$\sup_{x\in \cH, \|x\|_\cH = 1}|\E_{p(y)}x(y) - \E_{\tilde{p}_t(y)}x(y)| = \|\mu - \tilde{\mu}_t\|_\cH$.
Thus, a bound on $\|\mu - \tilde{\mu}_t\|_\cH$ implies control on the error in computing the expectation for all $x\in \cH$ such that $\|x\|_\cH=1$.
In kernel herding, since the objective function is a quadratic, line-search and short-step are identical.

\subsection{{Explaining the phenomenon in} \citet{bach2012equivalence}}\label{sec:kernel_whaba}

We briefly recall the infinite-dimensional kernel-herding setting of \citet[Section~5.1 and Figure~3, right]{bach2012equivalence}, see also \citet[Section~2.1]{wahba1990spline}.
Let $\cY = [0,1]$ and
\begin{align}\label{eq:hs}
     \cH  = \{& x \colon {[0,1]} \to \R \mid x'(y) \in L^2({[0,1]}),  x(y)= \sum_{j = 1}^{\infty}(a_j \cos(2\pi j y) + b_j \sin(2\pi j y)),  a_j, b_j \in \R\}.
\end{align}
For $w, x\in \cH$, 
$\langle w, x\rangle_\cH:= \int_{[0,1]} w'(y)x'(y) dy$
defines an inner product and $(\cH, \langle \cdot, \cdot \rangle_\cH)$ is a Hilbert space.
Moreover, $\cH$ is also a RKHS and for $y,z\in [0,1]$, $\cH$ has the reproducing kernel
\begin{align}\label{eq:whaba_kernel}
 k(y,z) & =  \sum_{j = 1}^\infty \frac{2}{(2\pi j)^{2}}\cos(2 \pi j (y-z))  = \frac{1}{2}B_{2}(y-z-\lfloor y - z\rfloor)  = \frac{1}{2}B_{2}([y-z]),
 \tag{Bernoulli-kernel}
\end{align}
where for $y \in\R$, $[y] := y-\lfloor y \rfloor$, and
$B_2(y) = y^2-y + \frac{1}{6}$
is a \emph{Bernoulli polynomial}.
In the right plot of Figure~$3$ in \citet{bach2012equivalence}, kernel herding on $[0, 1]$ and Hilbert space $\cH$ is considered for the uniform density $p(y) := 1$ for all $y\in {[0,1]}$.
Then, for all $z\in [0, 1]$, we have
$\mu (z)  = \int_{[0,1]} k(z,y)p(y) dy  = \int_{[0,1]} \sum_{j = 1}^\infty \frac{2}{(2\pi j)^{2}}\cos(2 \pi j (z-y))\cdot 1 dy  = \sum_{j = 1}^\infty 0 = 0$,
where the integral and the sum can be interchanged due to the theorem of Fubini, see, for example, \citet{royden1988real}. For the remainder of this section, we assume that $p(y) = 1$ and, thus, $\mu(y) = 0$ for all $y\in{[0,1]}$. Thus, $f(x) = \frac{1}{2}\|x\|_\cH^2$. 
For this setting, \citet{bach2012equivalence} observed empirically that FW with open-loop step-size $\eta_t = \frac{1}{t+1}$ converges at a rate of order $\cO(1/t^2)$, whereas FW with line-search converges at a rate of order $\cO(1/t)$, see the reproduced plot in Figure~\ref{fig:kernel_herding_uniform}. The theorem below explains the accelerated convergence rate for FW with step-size $\eta_t = \frac{1}{t+1}$.

\begin{theorem}[Kernel herding]\label{thm:answering_bach}
Let $\cH$ be the Hilbert space defined in \eqref{eq:hs}, let $k \colon \R \times \R \to \cH$ be the kernel defined in \eqref{eq:whaba_kernel}, let $\Phi\colon[0,1] \to\cH$ be the feature map associated with $k$ restricted to $[0,1]\times [0,1]$, let $\cC=\conv(\{\Phi(y)\mid y\in[0,1]\})$ be the marginal polytope, and let $\mu = 0$ such that $f(x) = \frac{1}{2}\|x\|_\cH^2$.
Let $T\in\N$ and $\eta_t = \frac{1}{t+1}$ for all $t\in\Z$.
Then, for the iterates of Algorithm~\ref{algo:fw} with step-size $\eta_t$ and the LMO satisfying Assumption~\ref{ass:argmin} (a tie-breaking rule), it holds that $f(x_t)  = 1/(24 t^2)$ for all $t \in\{1,\ldots, T\}$ such that $t=2^m$ for some $m\in\N$.
\end{theorem}

We first provide a proof sketch for Theorem~\ref{thm:answering_bach} and subsequently prove the theorem in detail.

\begin{proof}[Sketch of proof for Theorem~\ref{thm:answering_bach}]
The main idea behind the proof is that FW with $\eta_t = \frac{1}{t+1}$ leads to iterates $x_t = \frac{1}{t}\sum_{i = 1}^t \Phi(y_i)$ with $\{y_1, \ldots, y_t\} = \{\frac{i-1}{t} \mid i = 1, \ldots, t\}$ for all $t = 2^m$, where $m \in \N$. Then, the proof follows by a series of calculations. We make several introductory observations. Note that Line~\ref{line:p_t_det} of Algorithm~\ref{algo:fw} becomes
$p_t \in \argmin_{p\in \cC} Df(x_t) (p - x_t) = \argmin_{p\in \cC} Df(x_t)(p)$,
where, for $w, x\in \cH$, $D f(w)(x) = \langle w,x \rangle_\cH$ denotes the first derivative of $f$ at $w$.
For $x\in \cC$ and $x_t\in \cC$ of the form $x_t = \frac{1}{t}\sum_{i=1}^{t} \Phi(y_i)$ for $y_1,\ldots, y_t\in {[0,1]}$, it holds that
$Df(x_t)(x) = \langle \frac{1}{t}\sum_{i=1}^{t} \Phi(y_i), x\rangle_\cH$.
Then, for $y\in [0,1]$, let
\begin{align}\label{eq:def_g_t}
    g_t(y) := \langle \frac{1}{t}\sum_{i=1}^t \Phi(y_i), \Phi(y)\rangle_\cH
    =\frac{1}{t}\sum_{i=1}^t k(y_i, y).
\end{align}
Since the LMO of FW always returns a vertex of $\cC$ of the form $\Phi(y)$ for $y\in [0, 1]$ \citep{bach2012equivalence}, it holds that
$\min_{p\in \cC} Df(x_t)(p) = \min_{y\in {[0,1]}} g_t(y)$
and the vertex returned by the LMO during iteration $t$ is contained in the set
$\{\Phi(z) \mid z \in \argmin_{y\in {[0,1]}} g_t(y) \}$.
Thus, instead of considering the LMO directly over $\cC$, we can perform the computations over $[0,1]$.
To simplify the proof, we make the following assumption on the $\argmin$ operation in the LMO of FW, a tie-breaking rule in case $|\argmin_{p\in \cC}Df(x_t)(p)| \geq 2$.
\begin{assumption}\label{ass:argmin}
The LMO of FW always returns $p_t \in \argmin_{p\in \cC}Df(x_t)(p)$ such that $p_t = \Phi(z)$ for $ z = \min (\argmin_{y\in {[0,1]}} g_t(y))$.
\end{assumption}
Recall that FW starts at iterate $x_0$, but since $\eta_0 = 1$, it holds that $x_1 = \Phi(y_1)$. As we will prove in Lemma~\ref{lemma:second}, without loss of generality, we can assume that FW starts at iterate $x_1 = \Phi(y_1)$, where $y_1 = 0$.
\end{proof}

To rigorously prove Theorem~\ref{thm:answering_bach}, we require the following four technical lemmas. In the lemma below, we prove several technical properties of kernel $k$ as in \eqref{eq:whaba_kernel}.

\begin{lemma}\label{lemma:cos_is_symmetric}
    Let $\cH$ be the Hilbert space defined in \eqref{eq:hs} and let $k \colon \R \times \R \to \cH$ be the kernel defined in \eqref{eq:whaba_kernel}.
    For $y, z \in [0, 1]$ and $n\in\Z$, it holds that
    $k(y,z) = k(z,y) = k(|y-z|,0) = \frac{1}{2}B_2(|y-z|)$ and $k(y,z) = k(y, z+n)$.
\end{lemma}
\begin{proof}
We first prove that for $y, z \in [0, 1]$, it holds that $k(y,z) = k(z,y)$.
    Let $a\in[0, 1[$. Then,
    \begin{align}\label{eq:squarea}
        [a] &= a,  &  [-a]  &= 1 - a,  &  B_2([a])  & = a^2 - a + \frac{1}{6} = (1-a)^2 - (1-a) + \frac{1}{6} = B_2[-a],\\
        [1]  &=  0, &  [-1]  &=  0, &  B_2([1])  & = B_2([-1]).\label{eq:square1}
    \end{align}
    By \eqref{eq:squarea} and \eqref{eq:square1}, for any $y, z \in [0, 1]$, it holds that $k(y,z) = \frac{1}{2}B_2([y-z]) = \frac{1}{2}B_2([z-y]) = k(z,y)$.
    
Next, we prove that for $y, z \in [0, 1]$, it holds that $k(y,z) = k(|y-z|,0) = \frac{1}{2}B_2(|y-z|)$.
Let $y,z\in[0,1]$ such that $|y-z|=a\in[0,1[$.
Then, by \eqref{eq:squarea}, $k(y,z) = \frac{1}{2}B_2([y-z]) = \frac{1}{2}B_2([|y-z|]) = \frac{1}{2}B_2(|y-z|)$.
Furthermore,
$k(y,z) = \frac{1}{2}B_2([y-z]) = \frac{1}{2}B_2([|y-z|]) = k(|y-z|,0)$.
Next, let $y,z\in[0,1]$ such that $|y-z|=1$.
Then, by \eqref{eq:square1},
$k(y,z) = \frac{1}{2}B_2([y-z]) = \frac{1}{2}B_2([|y-z|]) =  \frac{1}{2}B_2([1]) = \frac{1}{12} = \frac{1}{2}\left(1^2 - 1 +\frac{1}{6}\right) = \frac{1}{2}B_2(1) = \frac{1}{2}B_2(|y-z|)$.
Furthermore,
$k(y,z) = \frac{1}{2}B_2([y-z]) = \frac{1}{2}B_2([|y-z|]) =  \frac{1}{2}B_2([1]) = k(|y-z|, 0)$.
    
Finally, we prove that for $y,z\in [0,1]$ and $n\in\Z$, it holds that $k(y,z)=k(y,z+n)$.
Indeed,
$k(y,z) = \frac{1}{2}B_2(y-z- \lfloor y-z\rfloor)= \frac{1}{2}B_2(y-z-n - \lfloor y-z-n\rfloor)= k(y,z+n)$.
\end{proof}
In the two lemmas below, we characterize $\argmin_{y\in [0,1]}g_t(y)$, where $g_t$ is defined as in \eqref{eq:def_g_t}.
\begin{lemma}\label{lemma:first}
Let $\cH$ be the Hilbert space defined in \eqref{eq:hs}, let $k \colon \R \times \R \to \cH$ be the kernel defined in \eqref{eq:whaba_kernel}, let $\Phi\colon[0,1] \to\cH$ be the feature map associated with $k$ restricted to $[0,1]\times [0,1]$,
let $t\in \N$, let $\{y_1 , \ldots, y_t\} = \{\frac{i-1}{t} \mid i \in \{1, \ldots, t\}\}$, and let $g_t$ be defined as in \eqref{eq:def_g_t}, that is, $g_t (y) = \frac{1}{t}\sum_{i=1}^tk(y_i,y)$. Then, it holds that $\argmin_{y\in{[0,1]}} g_t(y) = \{ y_i + \frac{1}{2t} \mid i \in \{1, \ldots, t\} \}$.
\end{lemma}
\begin{proof}
Let $t\in \N$ and $\{y_1, \ldots, y_t\} = \{\frac{i - 1}{t} \mid i \in \{1, \ldots, t\}\}$. We stress that this does not imply that for all $i\in\{1, \ldots, t\}$, $y_i = \frac{i-1}{t}$.
By Lemma~\ref{lemma:cos_is_symmetric}, for all $y\in [0, 1]$, it holds that
$g_t (y)  = \langle \frac{1}{t} \sum_{i = 1}^t \Phi(y_i), \Phi(y) \rangle_\cH =\frac{1}{t}\sum_{i=1}^t k(y_i, y) = \frac{1}{2t}\sum_{i = 1}^t(|y_i -y |^2 - |y_i -y| + \frac{1}{6})$.
Then, for $y \in [0, 1] \setminus \{y_1, \ldots, y_t\}$, it holds that
$g_t'(y) = \frac{1}{2t}\sum_{i=1}^t (2 (y - y_i) - \frac{y - y_i}{|y - y_i|})$
and since $\sum_{i=1}^{t}{y_i}= (t-1)/2$, we have
\begin{align*}
    g_t'(y) = \frac{1}{2}(2y - \frac{t-1}{t} - \frac{1}{t} \sabs{\{y_i < y \colon i \in \{1, \ldots, t\}\}} + \frac{1}{t}\sabs{\{y_i > y \colon i \in \{1, \ldots,t\}\}}).
\end{align*}
For $y\in \left]\frac{i-1}{t}, \frac{i}{t}\right[$, where $i \in \{1, \ldots, t\}$, it holds that
$g'_t (y)  = \frac{1}{2}(2y -  \frac{t-1}{t} - \frac{i}{t}  + \frac{t - i}{t}) = \frac{1}{2}(2y +\frac{1}{t} - \frac{2i}{t})$
and 
$g_t'(y) = 0$ if and only if
$y = \frac{i - \frac{1}{2}}{t}$.
Since $g_t$ is strongly convex on $]\frac{i-1}{t}, \frac{i}{t}[$ for $i \in \{1, \ldots, t \}$ and continuous on $[0, 1]$, it holds that $y_i = \frac{i-1}{t}$ cannot be a minimizer of $g_t$ on $[0,1]$ for any $i\in\{1, \ldots, t\}$. Since $g_t(0) =g_t(1)$ by Lemma~\ref{lemma:cos_is_symmetric}, $1$ cannot be a minimizer either.
Thus, only elements in $\{ y_i + \frac{1}{2t} \mid i \in \{1, \ldots, t\} \}$ can be minimizers of $g_t$ on $[0,1]$.
By Lemma \ref{lemma:cos_is_symmetric}, 
\begin{align*}
    \sum_{i = 1}^tk(\frac{i-1}{t}, \frac{j-1}{t}+ \frac{1}{2t}) - \sum_{i = 1}^t k(\frac{i-1}{t}, \frac{j}{t} + \frac{1}{2t}) & = \sum_{i = 1}^tk(\frac{i}{t}, \frac{j}{t}+ \frac{1}{2t}) - \sum_{i = 1}^t k(\frac{i-1}{t}, \frac{j}{t} + \frac{1}{2t}) \\
    & = k(\frac{t}{t}, \frac{j}{t}+ \frac{1}{2t}) -  k(\frac{0}{t}, \frac{j}{t}+ \frac{1}{2t})\\
    & = 0
\end{align*}
for all $j\in\{1, \ldots, t-1\}$.
Thus, $g_t(\frac{j-1}{t}+\frac{1}{2t}) =g_t(\frac{j}{t}+\frac{1}{2t})$ for all $j \in \{1, \ldots, t-1\}$. Thus, $g_t(\frac{i-1}{t} + \frac{1}{2t}) = g_t(\frac{j-1}{t} + \frac{1}{2t})$ for all $i,j \in \{1, \ldots, t\}$, proving the lemma.
\end{proof}

\begin{lemma}\label{lemma:second}
Let $\cH$ be the Hilbert space defined in \eqref{eq:hs}, let $k \colon \R \times \R \to \cH$ be the kernel defined in \eqref{eq:whaba_kernel}, let $\Phi\colon[0,1] \to\cH$ be the feature map associated with $k$ restricted to $[0,1]\times [0,1]$,
let $t\in \N$, let $y_1, \ldots, y_t \in [0, 1]$, and let $g_t$ be defined as in \eqref{eq:def_g_t}, that is,
$g_t (y) = \frac{1}{t}\sum_{i=1}^tk(y_i,y)$.
Suppose that $\argmin_{y\in {[0,1]}} g_t(y) = \{z_1, \ldots, z_k\}\subseteq [0, 1]$ for some $k\in \N$.
Let $c\in \R $, let $\tilde{y}_i = [y_i + c]$ for all $i \in \{1, \ldots, t\}$, and let
$\tilde{g}_t (y) = \frac{1}{t}\sum_{i=1}^tk(\tilde{y}_i,y)$.
Then, $\argmin_{z\in {[0,1]}} \tilde{g}_{t}(z) = \{[z_1 + c], \ldots, [z_k + c]\}$.
\end{lemma}
\begin{proof}
It holds that
\begin{align*}
    \argmin_{z\in[0,1]}\tilde{g}_t(z) & = \argmin_{z=[y+c], y\in\R} \tilde{g}_t (z)\\
    & =\argmin_{z=[y+c], y\in\R}\frac{1}{2t}\sum_{i=1}^t B_2([[y_i+c]-[y+c]])\\
    & =\argmin_{z=[y+c], y\in\R}\frac{1}{2t}\sum_{i=1}^t B_2([y_i+c - \lfloor y_i + c\rfloor - (y+c) -  (-\lfloor y + c\rfloor)])\\
    & =\argmin_{z=[y+c], y\in\R}\frac{1}{2t}\sum_{i=1}^t B_2([y_i - y - \lfloor y_i + c\rfloor +\lfloor y + c\rfloor])\\
    & =\argmin_{z=[y+c], y\in\R}\frac{1}{2t}\sum_{i=1}^t B_2([y_i - y ])\\
    & = \{[z_1 + c], \ldots, [z_k + c]\},
\end{align*}
where the second-to-last equality is due to Lemma~\ref{lemma:cos_is_symmetric}.
\end{proof}

In the lemma below, we leverage the previous lemmas to prove that FW with step-size $\eta_t = \frac{1}{t+1}$ leads to iterates $x_t = \frac{1}{t}\sum_{i = 1}^t \Phi(y_i)$ with $\{y_1, \ldots, y_t\} = \{\frac{i-1}{t} \mid i = 1, \ldots, t\}$ for all $t = 2^m$, where $m \in \N$.

\begin{lemma}\label{lemma:third}
Let $\cH$ be the Hilbert space defined in \eqref{eq:hs}, let $k \colon \R \times \R \to \cH$ be the kernel defined in \eqref{eq:whaba_kernel}, let $\Phi\colon[0,1] \to\cH$ be the feature map associated with $k$ restricted to $[0,1]\times [0,1]$, let $\cC=\conv(\{\Phi(y)\mid y\in[0,1]\})$ be the marginal polytope, and let $\mu = 0$ such that $f(x) = \frac{1}{2}\|x\|_\cH^2$.
Let $T\in\N$ and $\eta_t = \frac{1}{t+1}$ for all $t\in\Z$.
Then, for the iterates of Algorithm~\ref{algo:fw} with step-size $\eta_t$ and the LMO satisfying Assumption~\ref{ass:argmin} it holds that $x_t = \frac{1}{t}\sum_{i = 1}^t \Phi (y_i)$ with $\{y_1, \ldots, y_t\} = \{ \frac{i -1}{t} \mid i \in \{1,\ldots,t\}\}$ for all $t \in\{1,\ldots, T\}$ such that $t=2^m$ for some $m\in\N$,.
\end{lemma}
\begin{proof}
Since $\eta_0 = 1$, it holds that $x_1 = \Phi(y_1)$.
By Lemma~\ref{lemma:second}, without loss of generality, we can assume that FW starts with iterate $x_1 = \Phi(y_1)$, where $y_1 = 0$.
Let $t\in\{1,\ldots, T\}$. Since we use the step-size $\eta_t = \frac{1}{t+1}$, we obtain uniform weights, that is, $x_t = \frac{1}{t}\sum_{i=1}^t\Phi(y_i)$, where $y_i \in [0,1]$ for all $i \in \{ 1,\ldots, t\}$. 
Suppose that $t=2^m$ for some $m\in \N$. The proof that it holds that $\{y_1, \ldots, y_t\} = \{ \frac{i -1}{t} \mid i \in \{1,\ldots,t\}\}$ is by induction on $m\in \N$.
The base case, $m = 0$, follows from $x_1 = \Phi(y_1)$, where $y_1 = 0$.
Suppose that for $t=2^m$ for some $m\in\N$, it holds that $\{y_1, \ldots, y_t\} = \{ \frac{i -1}{t} \mid i \in \{1,\ldots,t\}\}$. If we show that
\begin{align}\label{eq:kh_to_prove}
    \{y_1, \ldots, y_{2t}\} = \{ \frac{i -1}{2t} \mid i \in \{1,\ldots,2t\}\},
\end{align}
the statement of the lemma follows from induction. \eqref{eq:kh_to_prove} is subsumed by the stronger statement that $y_{t+j} = y_j + \frac{1}{2t}$ for all $j \in\{ 1, \ldots, t\}$, and we prove the latter for the remainder of this proof.
By Lemma~\ref{lemma:first} and Assumption~\ref{ass:argmin}, it holds that $y_{t +1} = \frac{1}{2t}$. Suppose that for some $\ell \in\{1, \ldots, t-1\}$, it holds that $y_{t + j} = y_{j} + \frac{1}{2t}$ for all $j\in \{1, \ldots, \ell\}$.
We decompose the function $g_{t+\ell}(y)$ into $g_t(y)$ and
$\tilde{g}_{\ell}(y) = \langle \frac{1}{\ell} \sum_{i=1}^\ell \Phi(y_i + \frac{1}{2t}), \Phi(y)  \rangle_\cH$,
that is, we consider the decomposition
$g_{t+\ell}(y) = \frac{t}{t+\ell} g_t(y) + \frac{\ell}{t+\ell}\tilde{g}_{\ell}(y)$.
By Lemma~\ref{lemma:first}, 
$\argmin_{y\in[0,1]} g_{t}(y) = \left\{y_i + \frac{1}{2t} \mid i \in \{1, \ldots, t\}\right\}\subseteq [0,1]$
and by Assumption~\ref{ass:argmin}, $y_{\ell+1} = \min(\argmin_{y\in [0,1]}g_\ell(y))$. Thus,
by Lemma~\ref{lemma:second}, it holds that
$\min\argmin_{y\in [0,1]}\tilde{g}_\ell(y) = \min(\argmin_{y\in [0,1]}g_\ell(y) + \frac{1}{2t}) = y_{\ell+1} + \frac{1}{2t}\in \{y_i + \frac{1}{2t} \mid i \in \{1, \ldots, t\}\}$.
Thus, 
$\min\argmin_{y\in [0,1]}\tilde{g}_\ell(y) \in \argmin_{y\in[0,1]} g_{t}(y)$
and
\begin{align*}
    y_{t+\ell+1} = \min\argmin_{y\in [0,1]}g_{t+\ell}(y) = \min\argmin_{y\in [0,1]} \tilde{g}_\ell(y) =  y_{\ell+1} + \frac{1}{2t}.
\end{align*}
By induction, $y_{t+j} = y_j + \frac{1}{2t}$ for all $j \in\{ 1, \ldots, t\}$,
as required to conclude the proof.
\end{proof}

Finally, we prove Theorem~\ref{thm:answering_bach}.

\begin{proof}[Proof of Theorem~\ref{thm:answering_bach}]
By Lemma~\ref{lemma:third}, $x_t = \frac{1}{t}\sum_{i=1}^t \Phi(\frac{i - 1}{t})$ and, since $\mu = 0$, we have
$f(x_t) = \frac{1}{2}\|x_t\|_\cH^2 = \frac{1}{2t^2} \sum_{j=1}^{t} \sum_{i = 1}^{t} k(\frac{i-1}{t}, \frac{j-1}{t}) = \frac{1}{2t} \sum_{i=1}^{t}  k(\frac{i-1}{t}, 1)$,
where the last equality follows from repeatedly applying
\begin{align}\label{eq:proof_in_kh}
    \sum_{i=1}^tk(\frac{i-1}{t}, \frac{j-1}{t}) & = \sum_{i=1}^tk(\frac{i-1}{t}, \frac{j}{t}),
\end{align}
where $j\in\{1,\ldots,t\}$.
To see that \eqref{eq:proof_in_kh} holds, recall that by Lemma \ref{lemma:cos_is_symmetric}, it holds that
\begin{align*}
\sum_{i=1}^tk(\frac{i-1}{t}, \frac{j-1}{t}) - \sum_{i=1}^tk(\frac{i-1}{t},\frac{j}{t})= \sum_{i=1}^tk(\frac{i}{t}, \frac{j}{t}) - \sum_{i=1}^tk(\frac{i-1}{t},\frac{j}{t}) = k(1, \frac{j}{t}) - k(0, \frac{j}{t})=0
\end{align*}
for all $j\in\{1, \ldots, t\}$.
Thus,
$f(x_t)  = \frac{1}{2t} \sum_{i = 1}^{t} k(\frac{i-1}{t}, 1) = \frac{1}{2t} \sum_{i = 1}^{t} k(\frac{i-1}{t}, 0) = \frac{1}{2t} \sum_{i = 1}^{t} k(\frac{i}{t},0) = \frac{1}{4t} \sum_{i = 1}^{t} ((\frac{i}{t})^2 - \frac{i}{t} + \frac{1}{6})$,
where the second, third, and fourth equalities are due to Lemma~\ref{lemma:cos_is_symmetric}.
Since $\sum_{i = 1}^t i= \frac{t (t +1)}{2}$ and $\sum_{i=1}^t i^2 = \frac{2t^3 + 3t^2+t}{6}$, it holds that
$f(x_t) = \frac{1}{4t} (\frac{2t + 3+\frac{1}{t}}{6} - \frac{t+1}{2} + \frac{t}{6} ) = \frac{1}{24t^2}$.
\end{proof}
The proof of Theorem~\ref{thm:answering_bach} implies that the iterates of FW with open-loop step-size $\eta_t = \frac{1}{t+1}$ are identical to the Sobol sequence at any iteration $t=2^m$, where $m\in \N$. The Sobol sequence is known to converge at the optimal rate of order $\cO(1/t^2)$ \citep{bach2012equivalence} in this infinite-dimensional kernel-herding setting.
Here, the equivalence of FW with kernel herding leads to the study and discovery of new convergence rates for FW. This is in contrast to other papers \citep{chen2012super, bach2012equivalence, tsuji2022pairwise} in which FW is exploited to improve kernel-herding methods.
\begin{figure}[t]
\captionsetup[subfigure]{justification=centering}
\centering
\begin{tabular}{c c}
     \begin{subfigure}{.3\textwidth}
    \centering
        \includegraphics[width=1\textwidth]{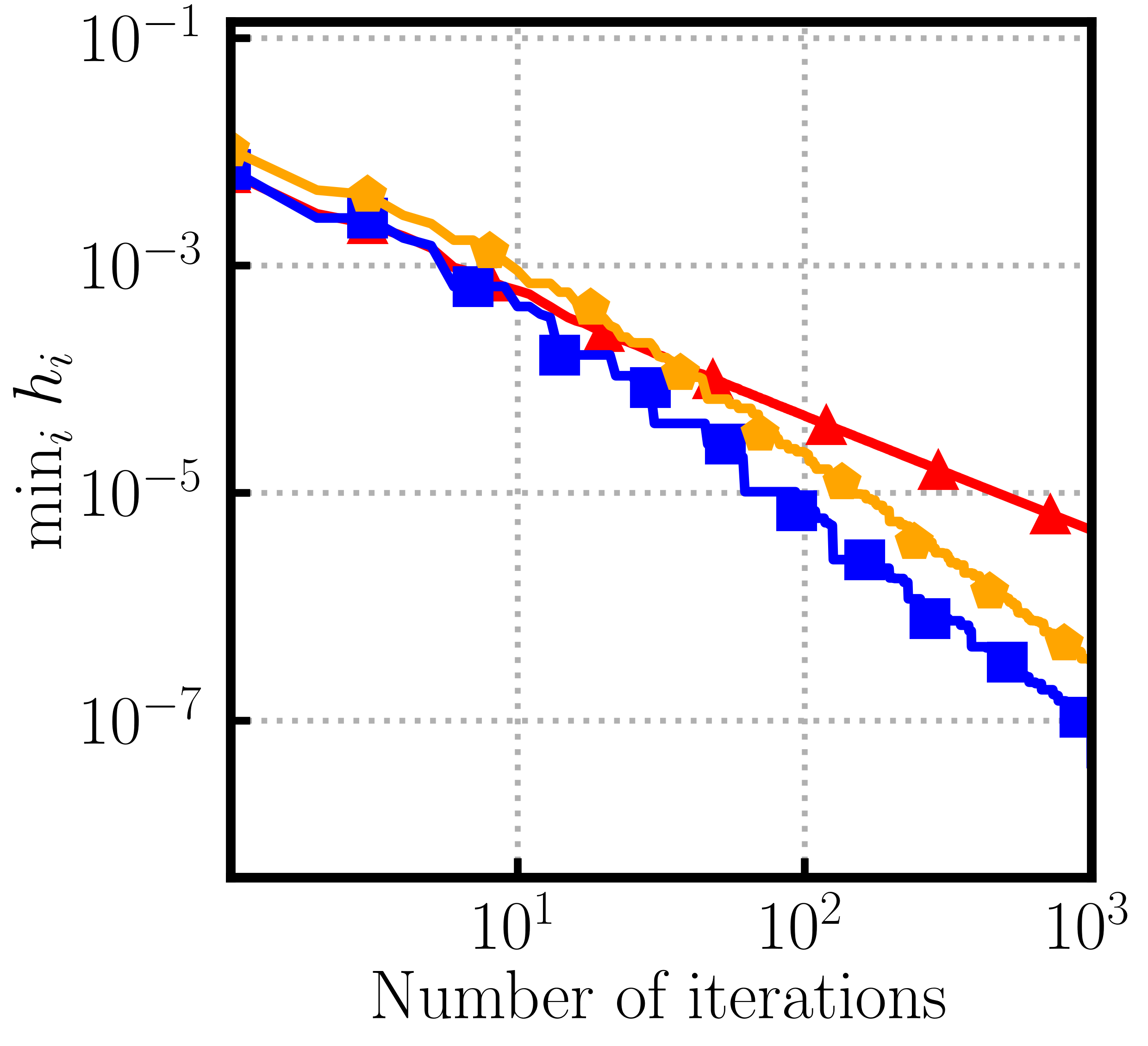}
        \subcaption{Uniform density.}
        \label{fig:kernel_herding_uniform}
    \end{subfigure}& 
     \begin{subfigure}{.3\textwidth}
    \centering
        \includegraphics[width=1\textwidth]{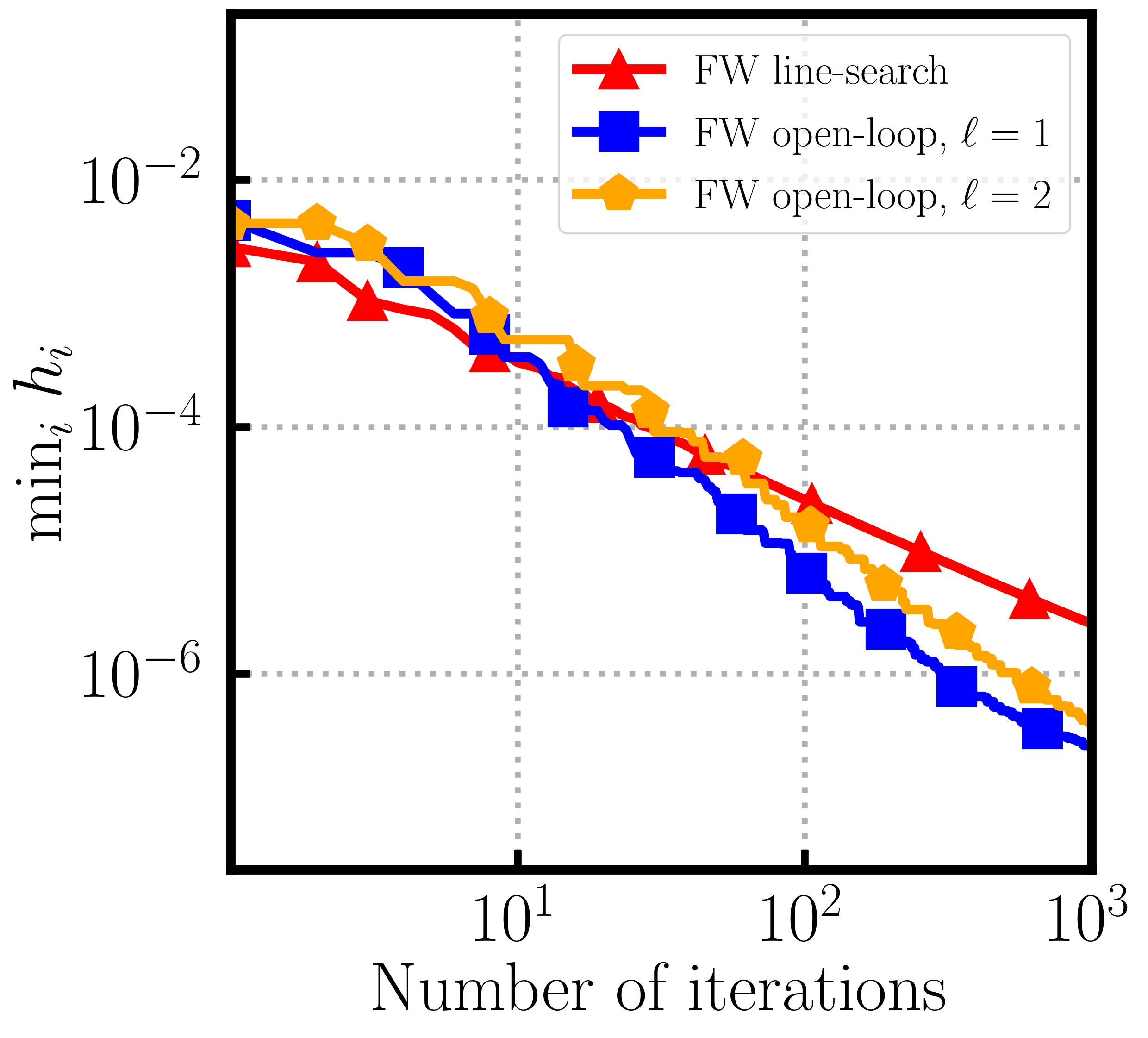}
        \subcaption{Non-uniform density.}
        \label{fig:kernel_herding_non_uniform}
    \end{subfigure}
\end{tabular}
\caption{
Comparison of FW with different step-sizes for the kernel-herding problem \eqref{eq:kh} as specified in Section~\ref{sec:kernel_herding} for RKHS $\cH$ as in \eqref{eq:hs}, kernel $k$ as in \eqref{eq:whaba_kernel}, and both uniform and non-uniform densities.
The $y$-axis represents the minimum primal gap. 
In both settings, FW with open-loop step-sizes converges at a rate of order $\cO(1/t^2)$ whereas FW with line-search converges at a rate of order $\cO(1/t)$.
}\label{fig:kernel_herding}
\end{figure}

The results in Figure~\ref{fig:kernel_herding}, see Section~\ref{sec:experiment_kernel_herding} for details, show that
in the kernel-herding setting of Section~\ref{sec:kernel_whaba}, for RKHS $\cH$ as in \eqref{eq:hs}, kernel $k$ as in \eqref{eq:whaba_kernel}, and both uniform and non-uniform densities over $ \cY = [0, 1]$, FW with open-loop step-sizes $\eta_t = \frac{\ell}{t+\ell}$, where $\ell\in\N_{\geq 1}$, converges at a rate of order $\cO(1/t^2)$ and FW with line-search converges at a rate of order $\cO(1/t)$. It remains an open problem to extend Theorem~\ref{thm:answering_bach} to non-uniform densities.
\section{{Numerical experiments}}\label{sec:numerical_experiments_main}
In this section, we present the numerical experiments. Numerical experiments corroborating our results in Sections~\ref{sec:blueprint}, \ref{sec:unconstrained}, and~\ref{sec:fw_variants} are omitted since the studies do not provide new insights or highlight unexplained convergence rates. All of our numerical experiments are implemented in \textsc{Python} and performed on an Nvidia GeForce RTX 3080 GPU with 10GB RAM and an Intel Core i7 11700K 8x CPU at 3.60GHz with 64 GB RAM. Our code is publicly available on 
\href{https://github.com/ZIB-IOL/open_loop_fw}{GitHub}. 
For all numerical experiments, to avoid the oscillating behavior of the primal gap, the $y$-axis represents $\min_{i\in\{1, \ldots, t\}} h_i$, where $t$ denotes the number of iterations and $h_i$ the primal gap.
\subsection{Detailed setups for the numerical experiments in Figures~\ref{fig:exterior}, \ref{fig:experiments_polytope}, and~\ref{fig:kernel_herding}}
Throughout the paper, we present several toy examples in Figures~\ref{fig:exterior}, \ref{fig:experiments_polytope}, and~\ref{fig:kernel_herding} to illustrate results and raise open questions. For completeness, we present the detailed setups for these experiments below.
\subsubsection{Detailed setup for numerical experiments in Figure~\ref{fig:exterior}}\label{sec:experiment_exterior}
For $d = 100$, we address \eqref{eq:opt} with FW for $\cC\subseteq \R^{d}$ the $\ell_p$-ball, $f(x) = \frac{1}{2}\|Ax-b\|_2^2$, where $A\subseteq \R^{100\times 100}$ and $b\in \R^{100}$ are a random matrix and vector, respectively, such that $f$ is not strongly convex, the unconstrained optimal solution $\argmin_{x\in \R^d}f(x)$ lies in the exterior of the feasible region and, thus, $\|\nabla f(x)\|_2 \geq \lambda > 0$ for all $x\in\cC$ and some $\lambda > 0$.
For $p\in\{2,3,5\}$, we compare FW with open-loop step-sizes $\eta_t = \frac{\ell}{t+\ell}$, where $\ell\in\{1, 2, 4, 6\}$, and the constant step-size introduced in Remark~\ref{rem:ol_linear}, starting with $x_0 = e^{(1)}$. We plot the results of the experiments in log-log plots in Figure~\ref{fig:exterior}.
\subsubsection{Detailed setup for numerical experiments in Figure~\ref{fig:experiments_polytope}}\label{sec:experiment_polytope}
For $d = 100$, we address \eqref{eq:opt} with FW for $\cC\subseteq \R^{d}$ the probability simplex and $f(x) = \frac{1}{2}\|x-\rho \bar{\oneterm}\|_2^2$, where $\rho \geq \frac{2}{d}$ and $\bar{\oneterm}$ is the vector with zeros for the first $\lceil d/2\rceil$ entries and ones for the remaining entries.
Then, $\frac{2}{d}\bar{\oneterm} = x^*\in\argmin_{x\in \cC} f(x)$ is the unique minimizer of $f$.
For $\rho \in \{\frac{1}{4}, 2\}$, we compare FW with line-search and open-loop step-sizes $\eta_t = \frac{\ell}{t+\ell}$, where $\ell\in\{1, 2, 4\}$, starting with $x_0 = e^{(1)}$. Here, short-step is identical to line-search and, thus, omitted. We plot the results of the experiments in log-log plots in Figure~\ref{fig:experiments_polytope}. 
\subsubsection{Detailed setup for numerical experiments in Figure~\ref{fig:kernel_herding}}\label{sec:experiment_kernel_herding}
We consider the kernel-herding setting of Section~\ref{sec:kernel_whaba} over $ \cY = [0, 1]$, that is, $\cH$ is the RKHS as in \eqref{eq:hs} and $k$ is the kernel as in \eqref{eq:whaba_kernel}. Given either the uniform density or a random non-uniform density of the form
$
    p(y) \backsim \left(\sum_{i=1}^n(a_i \cos(2  \pi i y) + b_i \sin (2 \pi i y))\right)^2
$
with $n\leq 5$ and $a_i,b_i\in \R$ for all $i\in\{1,\ldots, n\}$ such that $\int_{[0,1]} p(y) dy = 1$, we address \eqref{eq:kh} with FW with line-search and open-loop step-sizes $\eta_t = \frac{\ell}{t+\ell}$, where $\ell \in \{1, 2\}$. The LMO is implemented as an exhaustive search over $[0,1]$ and run for 1,000 iterations. We plot the results of the experiments in log-log plots in Figure~\ref{fig:kernel_herding}. 

\subsection{Logistic regression}\label{sec:logistic_regression}
\begin{figure}[t]
\captionsetup[subfigure]{justification=centering}
\begin{tabular}{c c c}
    \begin{subfigure}{.3\textwidth}
    \centering
        \includegraphics[width=1\textwidth]{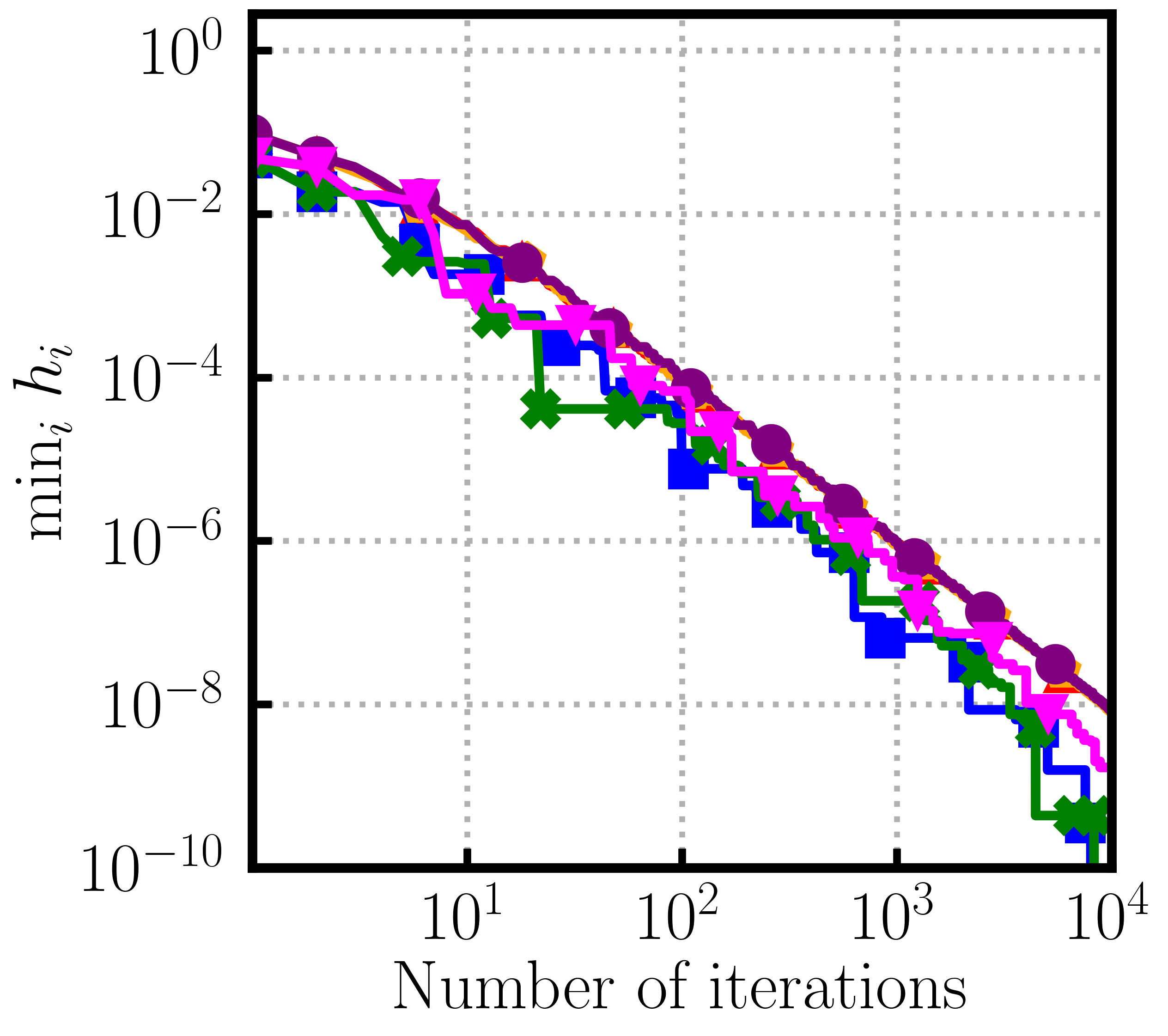}
        \caption{$\ell_1$-ball.}\label{fig:logistic_regression_1}
    \end{subfigure}& 
    \begin{subfigure}{.3\textwidth}
    \centering
        \includegraphics[width=1\textwidth]{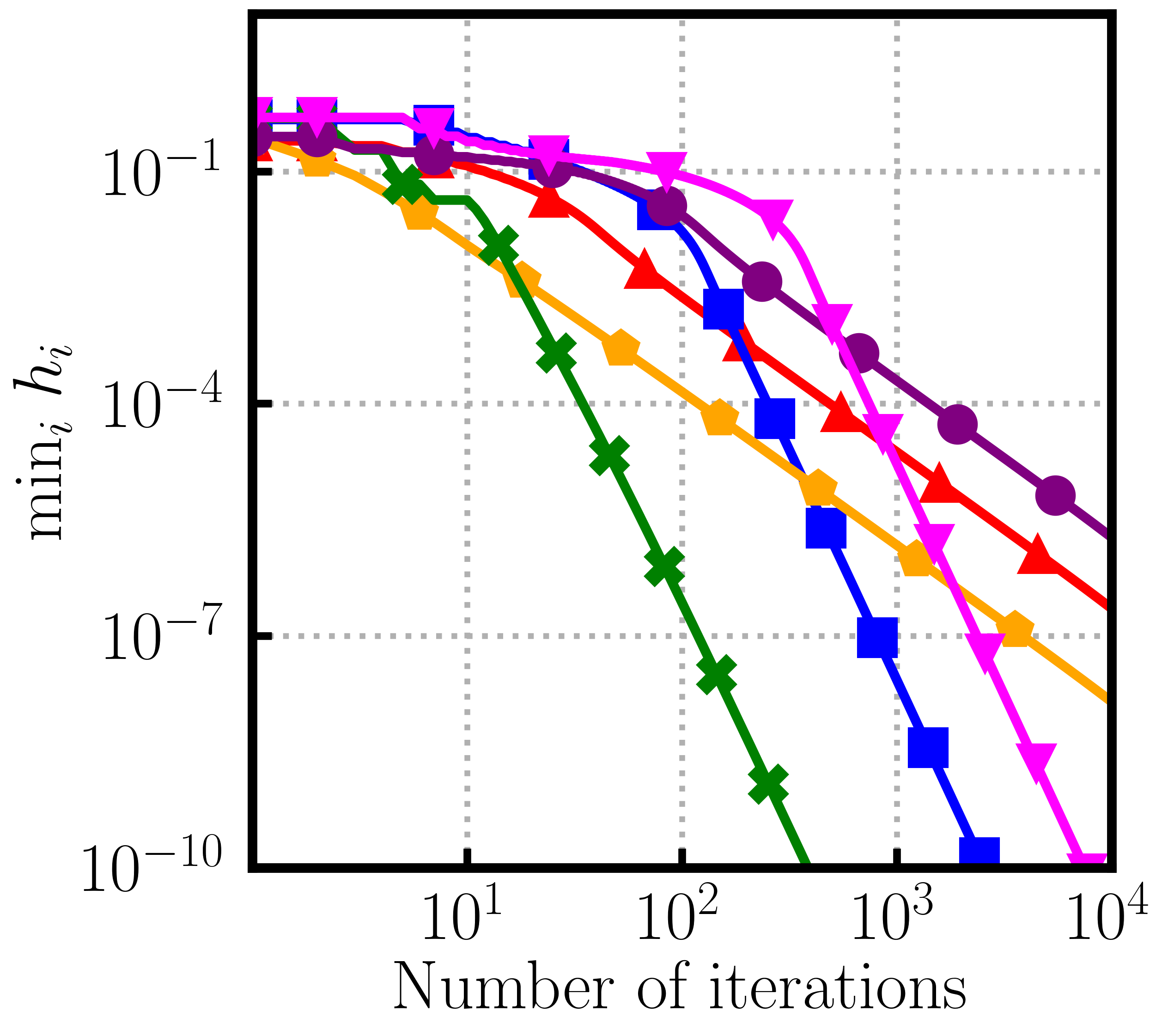}
        \caption{$\ell_2$-ball.}\label{fig:logistic_regression_2}
    \end{subfigure} & 
    \begin{subfigure}{.3\textwidth}
    \centering
        \includegraphics[width=1\textwidth]{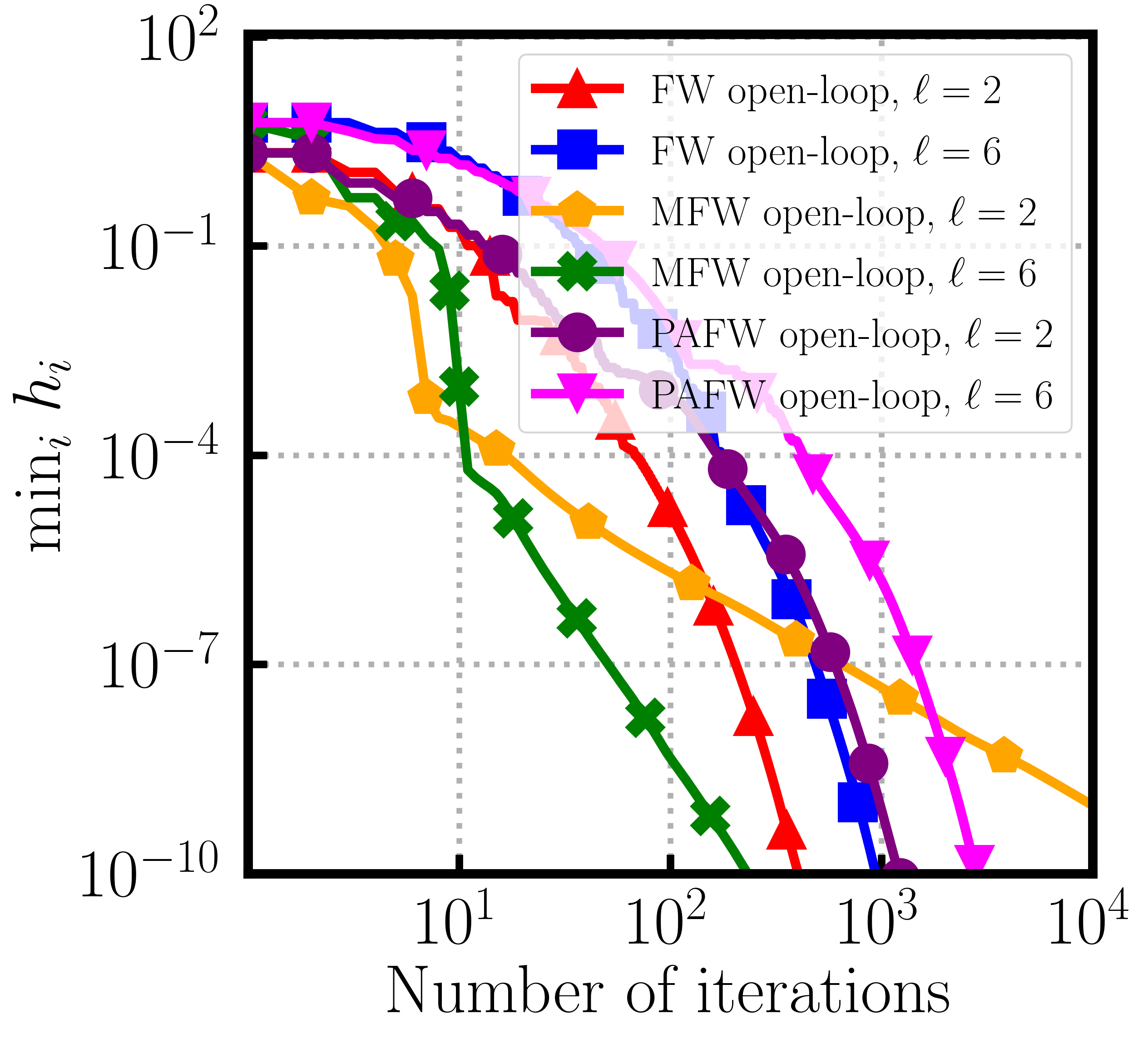}
        \caption{$\ell_5$-ball.}\label{fig:logistic_regression_5}
    \end{subfigure}\\
\end{tabular}
\caption{
Logistic regression for different $\ell_p$-balls.
}\label{fig:logistic_regression}
\end{figure}
We consider the problem of logistic regression, which for feature vectors $a_1,\ldots, a_m \in\R^d$, label vector $b\in\{-1,+1\}^m$, $p\in\R_{\geq 1}$, and radius $r > 0$, leads to the problem formulation
\begin{align*}
    \min_{x\in\R^d} & \frac{1}{m}\sum_{i=1}^m \log(1+\exp(-b_i a_i^\intercal x))\\
    \text{subject to} \ & \|x\|_p\leq r.
\end{align*}
Note that the feasible region is an $\ell_p$-ball and when $p=1$, the problem formulation is that of sparsity-constrained logistic regression, which induces sparsity in the iterates of FW variants.
For $p\in\{1,2,5\}$, we compare FW, PAFW, and MFW, with open-loop step-sizes $\eta_t=\frac{\ell}{t+\ell}$, where $\ell\in\{2,6\}$, on the Z-score normalized Gisette dataset\footnote{Available online at \href{https://archive.ics.uci.edu/ml/datasets/Gisette}{https://archive.ics.uci.edu/ml/datasets/Gisette}.} \citep{guyon2003introduction}. The number of features is $d=5,000$, we use $m=2,000$ samples of the dataset, and we set $r=1$. We plot the results of the experiments in log-log plots in Figure~\ref{fig:logistic_regression}.

PAFW and MFW seem to enjoy the same accelerated convergence rates as FW with step-sizes $\eta_t = \frac{\ell}{t+\ell}$, where $\ell\in\N_{\geq 1}$. This includes the rates of order $\cO(1/t^\ell)$ when $p\in\{2,5\}$, see also Remark~\ref{rem:ol_linear}. This raises the question whether PAFW \citep{lan2013complexity, kerdreux2021local} and MFW \citep{li2021momentum} admit accelerated convergence rates due to the exploitation of momentum, as indicated in the respective works, or due to the specific choice of open-loop step-size. 
Furthermore, MFW seems to converge at an accelerated rate earlier than FW, which converges at an accelerated rate earlier than PAFW. However, for $p = 5$, MFW converges quickly during early iterations but then converges at a slower rate than FW and PAFW, especially for step-size $\eta_t = \frac{2}{t+2}$.
For $p=1$, all methods converge at the same rate of order $\cO(1/t^2)$.

\subsection{Collaborative filtering}\label{sec:collaborative_filtering}
\begin{figure}[t]
\centering
\captionsetup[subfigure]{justification=centering}
\begin{tabular}{c}
    \begin{subfigure}{.3\textwidth}
    \centering
        \includegraphics[width=1\textwidth]{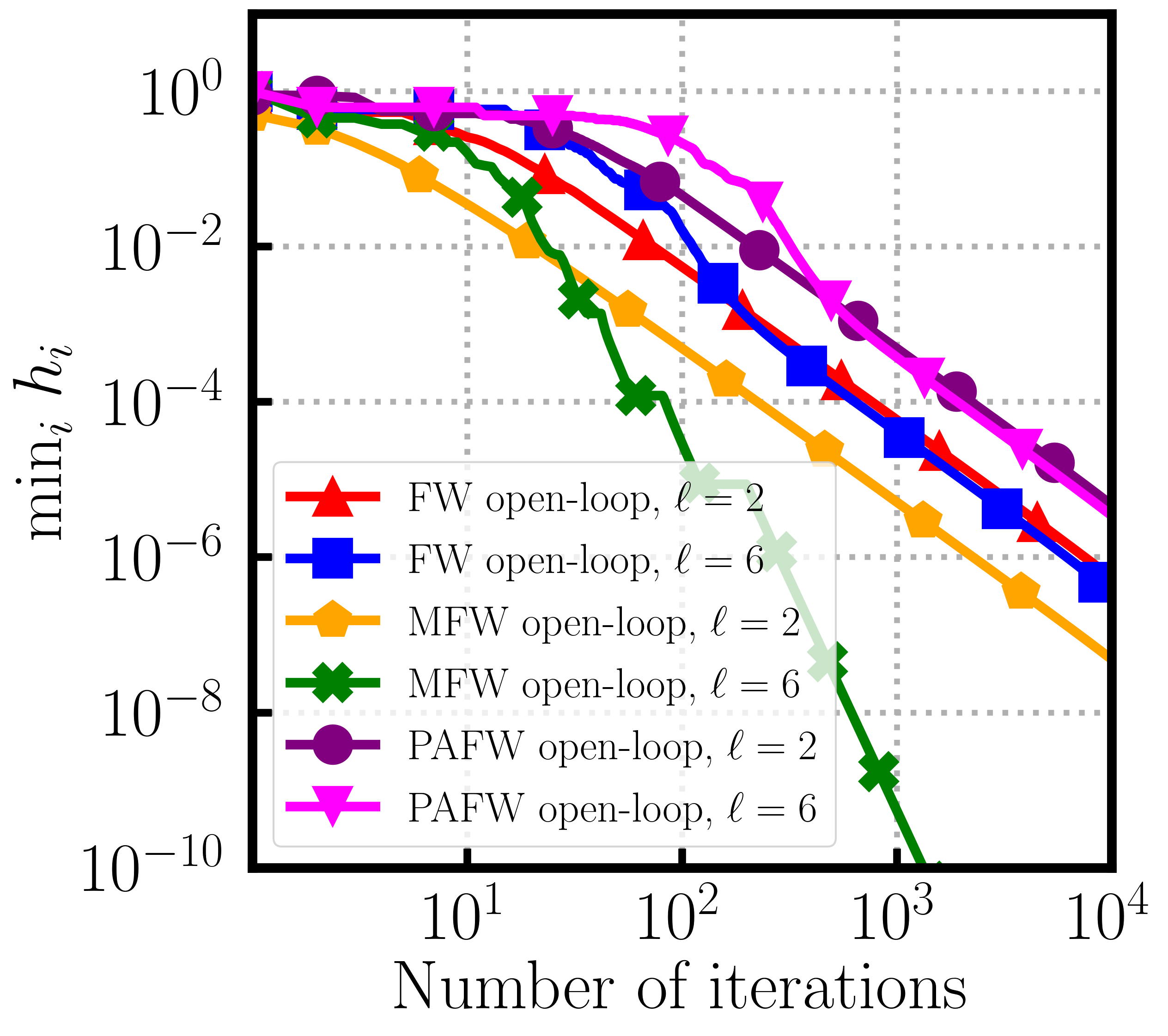}
    \end{subfigure}
\end{tabular}
\caption{
Collaborative filtering.
}\label{fig:collaborative_filtering}
\end{figure}
We consider the problem of collaborative filtering.
In particular, let $A\in\R^{m\times d}$ be a matrix with only partially observed entries, that is, there exists a subset of indices $\cI\subseteq \{1,\ldots,m\}\times\{1,\ldots,d\}$ such that only the entries $A_{i,j}$ with $(i,j)\in\cI$ are observed. The task is to predict the unobserved entries of $A$. Let $H_\rho$ be the Huber loss with parameter $\rho > 0$ \citep{huber1992robust}:
\begin{align*}
    H_\rho\colon x\in \R \mapsto  \begin{cases}
    \frac{x^2}{2}, & \text{if} \ |x| \leq \rho\\
    \rho(|x| - \frac{\rho}{2}), & \text{if} \ |x| > \rho,
    \end{cases}
\end{align*}
$\|\cdot\|_{\nuc}\colon X\in\R^{m\times d} \mapsto \trace(\sqrt{X^\intercal X})$ the nuclear norm, and $r>0$ the radius of the nuclear norm ball.
Since we assume the solution to be low rank, the approach of \citet{mehta2007robust} leads to the problem formulation
\begin{align*}
    \min_{X\in\R^{m\times d}} & \frac{1}{|\cI|} \sum_{(i,j)\in\cI} H_\rho(A_{i,j} - X_{i,j})\\
    \text{subject to} \ & \|X\|_{\nuc}\leq r.
\end{align*}
We compare FW, PAFW, and MFW, with open-loop step-sizes $\eta_t = \frac{\ell}{t+\ell}$, where $\ell\in\{2,6\}$, on the MovieLens 100k dataset\footnote{Available online at \href{https://grouplens.org/datasets/movielens/100k/}{https://grouplens.org/datasets/movielens/100k/}.} \citep{harper2015movielens} with $m=943$, $d=1682$, and $|\cI| = 10,000$, and we set $\rho = 1$ and $r = 2,000$.
We plot the results of the experiments in a log-log plot in Figure~\ref{fig:collaborative_filtering}. 

All algorithms with any step-size ultimately converge at a rate of order $\cO(1/t^2)$, except for MFW with step-size $\eta_t = \frac{6}{t+6}$, which appears to converge at a rate of order $\cO(1/t^6)$. The latter phenomenon is not currently motivated by results in this paper or \citet{li2021momentum}.
Among the different methods, MFW admits the fastest rate of convergence, followed by FW.
\section{{Discussion and open questions}}\label{sec:discussion}
We investigated settings in which FW with open-loop step-sizes achieves accelerated convergence rates. Specifically, we observed in Figures~\ref{fig:exterior} and~\ref{fig:logistic_regression} that FW with step-size $\eta_t=\frac{\ell}{t+\ell}$, where $\ell\in\N_{\geq 1}$, converges at a rate of order $\cO(1/t^\ell)$ when the feasible region $\cC$ is strongly convex and the norm of the gradient of $f$ is bounded from below by a nonnegative constant. These rates are better than the rates of order $\cO(1/t^{\ell/2})$ derived in Remark~\ref{rem:ol_linear}, which raises the question whether this gap between theory and practice can be closed. Furthermore, it remains to investigate the accelerated rates of order up to $\cO(1/t^\ell)$ when $\cC$ is only uniformly convex instead of strongly convex, see Figures~\ref{fig:exterior_3} and~\ref{fig:exterior_5}.
Furthermore, these convergence guarantees of order $\cO(1/t^{\ell/2})$ are significantly better than the convergence guarantees of order up to $\cO(1/t^2)$ of FW variants PAFW \citep{lan2013complexity, kerdreux2021local} and MFW \citep{li2021momentum}, which are designed to perform well in this setting.
We thus conducted numerical experiments to investigate whether PAFW and MFW also achieve accelerated rates depending on the choice of open-loop step-size. According to the logistic-regression experiments in Figure~\ref{fig:logistic_regression}, it appears that they do, which raises the question whether the accelerated convergence rates of PAFW and MFW stem from exploitation of momentum, as suggested in the respective works, or are in fact due to the choice of the open-loop step-size. 
The latter explanation is further supported by the unexplained convergence rate of order $\cO(1/t^6)$ of MFW with step-size $\eta_t = \frac{6}{t+6}$ in the collaborative filtering experiment in Figure~\ref{fig:collaborative_filtering}.
Further, we proved that FW with open-loop step-sizes achieves faster convergence rates than FW with line-search or short-step in the setting of the lower bound due to \citet{wolfe1970convergence}, assuming strict complementarity is satisfied. 
In case strict complementarity or similar assumptions are not satisfied, we proved that DIFW and AFW with open-loop step-sizes always converge at accelerated rates. 
We also answered the open question in \citet{bach2012equivalence} by demonstrating that FW with open-loop step-size $\eta_t = \frac{1}{t+1}$ achieves accelerated convergence rates in the setting of Section~\ref{sec:kernel_whaba} for the uniform density in Theorem~\ref{thm:answering_bach}. Numerical experiments in Figure~\ref{fig:kernel_herding_non_uniform} indicate that acceleration also holds for non-uniform densities, an observation which is currently not backed by theoretical results.
Finally, an important limitation of our study is that the proofs rely on norms, which are affine variant, whereas FW is known to be affine invariant. We plan to address this limitation in future work.

\subsubsection*{{Acknowledgements}}
This research was partially funded by the Deutsche Forschungsgemeinschaft (DFG, German Research Foundation) under Germany´s Excellence Strategy – The Berlin Mathematics Research Center MATH$^+$ (EXC-2046/1, project ID 390685689, BMS Stipend).

\bibliography{utils/biblio_infinite_dimensional_spaces.bib}


\end{document}